\documentclass[12pt]{article}

\usepackage[utf8]{inputenc}
\usepackage[T1]{fontenc}
\usepackage{mathptmx}
\usepackage{amsmath,amssymb,amsthm}
\usepackage{mathtools}
\usepackage{mathrsfs}
\usepackage{graphicx}
\usepackage[letterpaper,margin=1in]{geometry}
\usepackage{array}
\usepackage{enumerate}
\renewenvironment{abstract}
	{\quotation}
	{\endquotation}

\date{}

\makeatletter
\renewcommand{\fnum@figure}{\textbf{Fig. \thefigure}}
\renewcommand{\fnum@table}{\textbf{Table \thetable}}
\long\def\@makecaption#1#2{%
  \vskip\abovecaptionskip
  \sbox\@tempboxa{#1. #2}%
  \ifdim \wd\@tempboxa >\hsize
    #1. #2\par
  \else
    \global\@minipagefalse
    \hb@xt@\hsize{\hfil\box\@tempboxa\hfil}%
  \fi
  \vskip\belowcaptionskip}
\makeatother

\usepackage{scicite}
\usepackage{url}
\usepackage[colorlinks=true,linkcolor=blue,citecolor=blue,urlcolor=blue]{hyperref}

\newcommand{\aLeb}{a_{\mathrm{Leb}}}
\newcommand{\area}{\operatorname{area}}
\newcommand{\R}{\mathbb{R}}
\newcommand{\Z}{\mathbb{Z}}

\def\scititle{
	An exact hierarchy for Lebesgue's universal covering constant and a certified 0.834 lower bound
}

\title{\bfseries \boldmath \scititle}

\author{
	Shuai~Zeng$^{1\ast}$\\
	\small$^{1}$Chongqing University of Posts and Telecommunications, Chongqing, China.\\
	\small$^\ast$Corresponding author. Email: zengshuai@cqupt.edu.cn
}

\newtheorem{theorem}{Theorem}[section]
\newtheorem{lemma}[theorem]{Lemma}
\newtheorem{proposition}[theorem]{Proposition}
\newtheorem{corollary}[theorem]{Corollary}

\theoremstyle{definition}

\theoremstyle{remark}

\begin{document}
\maketitle

\begin{abstract} \bfseries \boldmath
Posed by Lebesgue in 1914, the universal covering problem asks for the
smallest-area planar convex set containing a congruent copy of every set
of diameter at most one.  We introduce an exact Reuleaux-type
variational hierarchy for this constant: its monotone finite-arc values
\(\Lambda_M\) satisfy \(\aLeb=\lim_{M\to\infty}\Lambda_M\), and each
level is a continuous finite-dimensional problem.  We prove
\(0\le \aLeb-\Lambda_M\le C M^{-2}\), giving a controlled finite-arc
route to the constant itself.  As a certified low-order realization, an
outward-rounded interval certificate for a regular finite Reuleaux
subtest proves \(\aLeb\ge0.834\), improving the lower-bound benchmark
established by Brass and Sharifi in 2005.
\end{abstract}

\section*{Introduction}

Lebesgue's universal covering problem asks for the least possible area
of a compact planar convex set that contains a congruent copy of every
nonempty compact planar set of diameter at most one.  More precisely, a
compact convex set \(U\) is called a universal cover if, for every
nonempty compact planar set \(K\) with diameter at most one, some
Euclidean isometry \(g\) satisfies \(g(K)\subset U\).  The corresponding
Lebesgue universal covering constant, denoted \(\aLeb\), is the infimum
of the areas of all compact convex universal covers.

The problem was posed by Lebesgue in 1914 in a letter to P\'al and has
remained open for more than a century~\cite{Pal1920}.  P\'al showed that
a regular hexagon with inradius \(1/2\) is a universal cover and then
improved this construction by removing two corners, obtaining the
classical upper bound
\[
\aLeb\le 2-\frac{2}{\sqrt 3}\approx0.8453.
\]
Subsequent work by Sprague, Hansen, Baez--Bagdasaryan--Gibbs, and Gibbs
further reduced the known explicit upper bounds by increasingly delicate
geometric cuts from previously known covers~\cite{Sprague1936,
Hansen1992,BaezBagdasaryanGibbs2015,Gibbs2018}.

The lower-bound side has a different character.  An upper bound is
proved by exhibiting one universal cover of a given area.  A lower bound
must rule out every convex universal cover below a proposed area, and
therefore must identify geometric constraints that any universal cover
must satisfy.  Brass and Sharifi improved the lower-bound benchmark to
\(\aLeb\ge0.832\) in 2005 by using a finite collection of extremal test
shapes and analyzing the area of their convex hull under optimal relative
placements~\cite{BrassSharifi2005}.  That argument gives a rigorous
finite-test obstruction.

The known upper and lower constructions are rigorous and geometric, but
they do not by themselves organize the full covering constant into a
convergent finite-dimensional variational framework.  This paper supplies
such a framework for the century-old problem: it organizes the infinite
family of test sets into finite levels whose values converge to the
original covering constant.  The hierarchy constructed here embeds finite
lower-bound tests in a monotone sequence of continuous finite-dimensional
finite-arc problems, where finite-arc means that the boundary has a
bounded number of circular arcs.

The first main result is exactness:
\[
\aLeb=\lim_{M\to\infty}\Lambda_M.
\]
Here \(\Lambda_M\) denotes the \(M\)-th finite-arc variational value
defined below.
The Lebesgue constant is recovered as the limit of an explicit
Reuleaux-type hierarchy.  The second main result is quantitative: for an
absolute constant \(C\),
\[
0\le \aLeb-\Lambda_M\le C M^{-2}.
\]
This gives an a priori scale for the loss incurred by truncating the
hierarchy at finite arc complexity \(M\).  The third result is a certified
low-order realization.  From the second hierarchy level we extract a
regular finite Reuleaux subtest, reduce its relative placement problem to
a compact five-dimensional domain, and prove by outward-rounded interval
arithmetic that this finite test already forces area at least \(0.834\).
Hence
\[
\aLeb\ge0.834.
\]
This improves the lower-bound benchmark that had stood since 2005 and
demonstrates that the hierarchy can be turned into rigorous finite
certificates.  The bound is a low-order realization of the general
hierarchy; the exactness and quantitative convergence theorems provide
the route for systematic finite-level improvement.

The logical relation among these results is shown in Fig.~\ref{fig:hierarchy}.
The main text states the variational framework, the exactness theorem,
the quantitative convergence theorem, and the certified low-order
application.  The Supplementary Materials give the full mathematical
proofs and certificate-level details.

\begin{figure}[!t]
\centering
\includegraphics[width=0.84\textwidth]{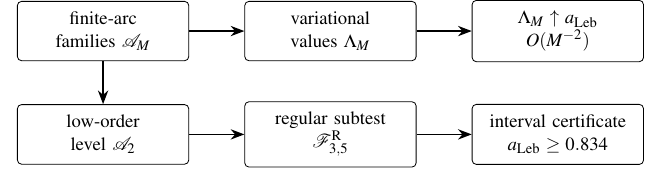}
\caption{\textbf{From infinite covering constraints to an exact finite-arc hierarchy.}
The top row records the Reuleaux-type hierarchy, whose values
\(\Lambda_M\) increase to \(\aLeb\) with \(O(M^{-2})\) truncation error.
The bottom row shows the certified low-order realization used here: a
regular finite Reuleaux subtest is extracted from the second level and
verified by interval arithmetic.}
\label{fig:hierarchy}
\end{figure}

\section*{Results}

\subsection*{Reduction to constant width and finite-arc hierarchy}

The first reduction is classical.  It is enough to test bodies of
constant width one.  More precisely, let \(K\) be a nonempty compact
convex planar test set of diameter at most one.  Then \(K\) is contained
in a planar body of constant width one; conversely, every width-one
constant-width body has diameter one.  Hence a compact convex set is a
universal cover exactly when it contains a congruent copy of every planar
body of constant width one~\cite{BonnesenFenchel1934,Eggleston1958,
ChakerianGroemer1983,Schneider2014}.  We denote this class by
\(\mathcal W_1\).

This reduction is important because it separates the covering problem
from the arbitrary topology of diameter-one sets.  The original
quantifier ranges over all nonempty compact planar sets of diameter at
most one.  After convexification and completion, the same constant is
obtained by testing the geometrically structured class
\(\mathcal W_1\).  This class is still large: a general body of constant
width can have a nonsmooth boundary and a continuous curvature
distribution.  The hierarchy below is designed to approximate this
infinite-dimensional class without leaving the category of legitimate
width-one test bodies.  The full problem formulation and the
constant-width reduction are given in Supplementary Materials, sections
S1 and S2.

The class \(\mathcal W_1\) is still infinite-dimensional.  The hierarchy
is built by replacing it with increasing finite-arc Reuleaux-type
subfamilies.  Let \(D\) be the disk of radius \(1/2\).  For each odd
\(n\ge3\), let \(\mathcal R_n\) denote the family of width-one
Reuleaux-type constant-width bodies whose boundary consists of \(n\)
circular-arc components.  The \(M\)-th hierarchy level is
\[
\mathcal A_M
=
\{D\}\cup\bigcup_{m=1}^M\mathcal R_{2m+1}.
\]
The disk is included as the infinite-arc endpoint of the construction.

Each \(\mathcal R_n\) is a continuous finite-dimensional geometric
family.  After quotienting translations, its members are described by
finitely many arc-transition angles subject to closing, antipodal,
ordering, and nondegeneracy constraints.  A fixed hierarchy level has
bounded finite-arc complexity while still containing continuous families
of shapes.
Concretely, a nondegenerate member of \(\mathcal R_n\) is encoded by the
\(2n\) transition angles of the \(0/1\) curvature-radius density on
\(S^1\), the unit circle.  The antipodal relation pairs transition data,
the two closing equations impose the vanishing first moment, cyclic
ordering gives inequalities, and rotations act by adding a common angle.
After the usual Euclidean quotient, each level is described by finitely
many real variables and finitely many equations and inequalities.
The finite-arc families and the variational hierarchy are defined in
detail in Supplementary Materials, sections S3 and S4.

For a test family \(\mathcal F\), define \(\Lambda(\mathcal F)\) to be
the least area of a convex set containing a congruent copy of every
member of \(\mathcal F\).  Set
\[
\Lambda_M=\Lambda(\mathcal A_M).
\]
Since \(\mathcal A_M\subset \mathcal A_{M+1}\subset\mathcal W_1\), the
values satisfy
\[
\Lambda_M\le\Lambda_{M+1}\le\aLeb.
\]
This gives the lower-bound direction of the hierarchy: finite levels give
valid lower bounds, and adding more finite-arc complexity can only
increase the value.

At fixed \(M\), the class \(\mathcal A_M\) is a finite union of
continuous finite-dimensional families of constant-width bodies, with a
uniform bound on the number of circular arcs.  The associated value
\(\Lambda_M\) is therefore a finite-dimensional geometric optimization
problem over bodies of bounded finite-arc complexity.  These stages
organize the original infinite-dimensional search while retaining a
limiting connection to the original constant.

For a finite subfamily \(\mathcal F=\{K_1,\ldots,K_N\}\), the same
functional has the concrete convex-hull form
\[
\Lambda(\mathcal F)
=
\inf_{g_1,\ldots,g_N}
\area\,
\operatorname{conv}\left(\bigcup_{j=1}^N g_jK_j\right),
\]
where the infimum is over Euclidean placements.  This formula is the
bridge from the abstract hierarchy to finite-dimensional certified
placement problems: any universal cover containing one congruent copy of
each \(K_j\) must also contain the convex hull of such a common
placement.  The finite-family convex-hull formula is proved in
Supplementary Materials, section S4.

The certified computation below uses one finite subfamily extracted from
this hierarchy.  The exactness theorem places this low-order computation
in a framework whose finite-arc levels converge to \(\aLeb\).

\subsection*{Exactness of the hierarchy}

The main structural theorem is
\[
\aLeb=\lim_{M\to\infty}\Lambda_M.
\]
Passing from all constant-width bodies to finite-arc Reuleaux-type
families loses no information in the limit.  The theorem gives a limiting
variational characterization of \(\aLeb\) itself.

Equivalently, for every \(\varepsilon>0\) there is a finite-arc level
\(M\) such that the optimal lower-bound value obtained from
\(\mathcal A_M\) is within \(\varepsilon\) of the true Lebesgue constant.
This statement addresses the structural issue left open by isolated
finite tests: it gives a convergent sequence of test families whose
limiting value is the original universal covering constant.

The proof uses the curvature-radius representation of bodies of constant
width.  A width-one body can be described by a curvature-radius density
\(\rho(\theta)\) satisfying \(0\le\rho\le1\), together with linear
closing and antipodal constraints.  Finite-arc Reuleaux-type bodies are
the bang-bang cases in which \(\rho\) takes only the values \(0\) and
\(1\), with finitely many switches.  Intervals where \(\rho=1\) give
circular arcs, while intervals where \(\rho=0\) give vertex normal cones.

The key approximation step is a finite-switch bang-bang purification
principle.  On a prescribed angular mesh, the curvature-radius density is
replaced by a \(0/1\)-valued step density that preserves the relevant
moments.  These moments encode the closing and antipodal conditions
needed for a legitimate constant-width body.  The number of switches is
controlled by the number of moment constraints, so the purified body has
finite arc complexity.  The result is a legal finite-arc Reuleaux-type
body close to the original constant-width body.  The underlying
purification step follows from a one-dimensional finite-switch selection
principle based on nonatomic vector-measure purification and affine
Hobby--Rice arguments
~\cite{Lyapunov1940,DvoretzkyWaldWolfowitz1951,HobbyRice1965,Warga1972}.

Compactness then completes the exactness argument.  If the hierarchy did
not converge to \(\aLeb\), one could take a sequence of nearly optimal
covers for all finite levels and pass to a Hausdorff limit.  Density of
the finite-arc Reuleaux-type bodies would force the limiting convex set
to contain every constant-width body, hence to be a universal cover.
This contradicts the assumed gap below \(\aLeb\).  The full proof,
including the finite-switch statement and the density argument, is given
in Supplementary Materials, section S5.

The disk \(D\) plays a technical and geometric role in this construction.
It is included explicitly as the round constant-width body and may be
viewed as the infinite-arc endpoint of the finite-arc families.  Including
it prevents the hierarchy from treating the round case as a limiting
exception and keeps the finite levels compatible with the compactness
arguments used in the proof.

\subsection*{Quantitative convergence}

Exactness is quantitative.  There exists a constant \(C\) such that
\[
0\le \aLeb-\Lambda_M\le C M^{-2}.
\]
After the hierarchy is truncated at level \(M\), the remaining possible
loss in the variational value is at most order \(M^{-2}\).  The same
estimate gives tail stability:
\[
0\le \Lambda_N-\Lambda_M\le C M^{-2}\qquad (N\ge M).
\]
Later hierarchy levels can change the lower-bound value by at most order
\(M^{-2}\) after the \(M\)-th level has been reached.

The proof has three steps.  First, local moment matching produces
finite-switch bang-bang approximants while preserving the linear
constraints that define width-one bodies.  Second, a Green-kernel
estimate converts the moment control into a second-order support-function
and Hausdorff approximation bound for finite-arc Reuleaux-type bodies.
Third, a one-sided area-transfer argument converts Hausdorff
approximation into a covering-area estimate.  If a convex set covers the
finite-arc approximants, then a parallel enlargement of controlled width
covers the corresponding constant-width bodies; the resulting area
increase is controlled by a Steiner-type estimate.  This gives the
stated \(M^{-2}\) upper bound for \(\aLeb-\Lambda_M\).  The Green-kernel
estimate, the Hausdorff approximation bound, and the area-transfer lemma
are proved in Supplementary Materials, section S6.

Moment constraints produce the second-order estimate.  A naive finite-arc
approximation would only control first-order boundary errors; the moment
conditions cancel the leading local error in the support function, and
the Green kernel for the one-dimensional support equation then gives a
quadratic mesh-size bound.  Since the number of available arcs grows
linearly with \(M\), the mesh scale is \(M^{-1}\) and the resulting
Hausdorff approximation scale is \(M^{-2}\).  The area-transfer step
preserves this order at the level of the covering functional.

The estimate controls the truncation error of the full finite-arc
hierarchy and gives the tail bound used to guide finite certified
realizations.

\subsection*{Certified low-order realization and the 0.834 bound}

The certified numerical result is a low-order realization of the
hierarchy.  The second hierarchy level contains the disk and the first
two nontrivial odd Reuleaux-type families:
\[
\mathcal A_2=\{D\}\cup\mathcal R_3\cup\mathcal R_5.
\]
From this level we extract the regular finite Reuleaux subtest
\[
\mathcal F_{3,5}^{\rm R}=\{D,R_3,R_5\},
\]
where \(R_3\) and \(R_5\) are the regular Reuleaux triangle and regular
Reuleaux pentagon of width one.  Its finite-test value
\[
L_{3,5}^{\rm R}:=\Lambda(\mathcal F_{3,5}^{\rm R})
\]
is automatically a lower bound for \(\aLeb\) once it is certified from
below, because \(\mathcal F_{3,5}^{\rm R}\subset \mathcal W_1\).  The
finite-test value and its convex-hull formulation are detailed in
Supplementary Materials, sections S7 and S8.

After fixing the disk and using the symmetries of the regular Reuleaux
triangle and pentagon, the common-placement problem reduces to five
parameters.  If
\[
z=(x_3,y_3,\psi_5,x_5,y_5)
\]
denotes the normalized placement vector, the area functional is
\[
A(z)=
\area\,
\operatorname{conv}\bigl(D\cup R_3(z)\cup R_5(z)\bigr).
\]
With the complete normalization,
\[
L_{3,5}^{\rm R}=\inf_z A(z),
\]
where \(z\) ranges over all admissible normalized configurations.  The
three test bodies and the associated five-parameter model are shown in
Fig.~\ref{fig:finite-test}.  The complete normalization is proved in
Supplementary Materials, section S9.

\begin{figure}[!t]
\centering
\includegraphics[width=0.98\textwidth]{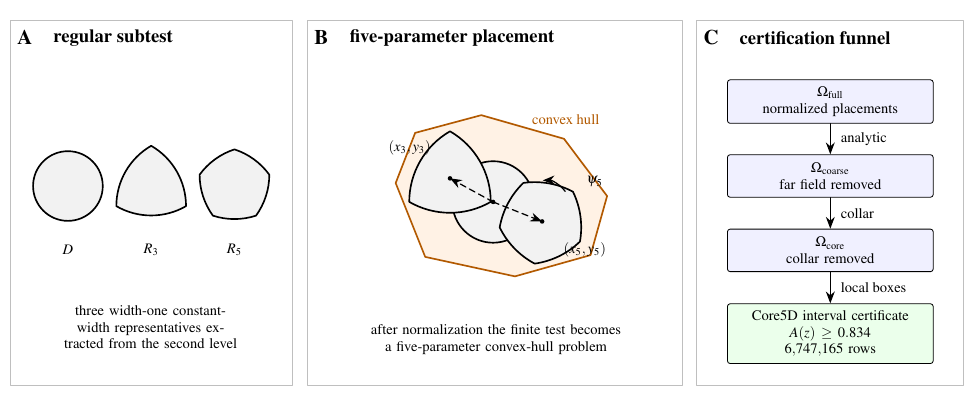}
\caption{\textbf{The certified finite Reuleaux realization.}
(A) The regular finite subtest consists of the disk \(D\), the regular
Reuleaux triangle \(R_3\), and the regular Reuleaux pentagon \(R_5\), all
of width one.  (B) After normalization, a common placement is described
by five parameters and gives a convex-hull area function \(A(z)\).
(C) The interval certificate proves \(A(z)\ge0.834\) by reducing the
full parameter domain to a compact core and certifying the remaining
boxes.}
\label{fig:finite-test}
\end{figure}

Analytic far-field estimates and collar certificates reduce the
unbounded normalized parameter domain to the compact core box
\[
\Omega_{\rm core}
=
[-0.08,0.08]^2\times[0,\pi/5]\times[-0.08,0.08]^2.
\]
On this core domain, a finite outward-rounded interval certificate proves
that \(A(z)\ge0.834\) for every \(z\).  The certificate records a
subdivision and local geometric templates, and the verifier checks the
domain reductions, box coverage, and interval lower bounds using
outward-rounded interval
arithmetic~\cite{Moore1966,Neumaier1990,MooreKearfottCloud2009,
Tucker2011}.  The main components are summarized in
Table~\ref{tab:certificate}.  The analytic reductions, core-domain
certificate, and global certification theorem are described in
Supplementary Materials, sections S10 to S14.

\begin{table}[!t]
\caption{\textbf{Main components of the interval certificate.}}
\centering
\begin{tabular}{p{0.34\textwidth}p{0.56\textwidth}}
\hline
component & role \\
\hline
far-field estimates & remove configurations with large translations \\
collar certificates & reduce the coarse bounded domain to the core box \\
Core5D certificate & cover the remaining five-dimensional core domain by interval boxes \\
local methods & certify convex-hull lower bounds on each leaf box \\
deterministic verifier & recompute all certified inequalities from archived data \\
\hline
\end{tabular}
\label{tab:certificate}
\end{table}

The certificate has three local mechanisms.  The direct inner-polygon
method constructs a certified polygonal region inside the true convex
hull and bounds its shoelace area from below.  The template convexity
method uses a fixed ordered template whose convexity and area remain
certified throughout a parameter box.  The simple-template mean-value
method proves a lower area bound by evaluating the center area and a
rigorous derivative enclosure over the box.  In each case the certified
polygonal region is contained in the true convex hull, so a certified
area lower bound for the polygon is a certified lower bound for \(A(z)\).
The local certificate methods are specified in Supplementary Materials,
section S13.

The archived certificate contains \(6,747,165\) records.  The final audit
has certification status PASS, zero errors and warnings, and a minimum
strict certificate margin above \(0.834\) larger than \(2.6\cdot10^{-11}\).
Combining the far-field reductions, the collar certificates, and the
Core5D interval certificate gives
\[
L_{3,5}^{\rm R}\ge0.834.
\]
Since \(L_{3,5}^{\rm R}\le\aLeb\), this proves
\[
\aLeb\ge0.834.
\]
This improves the best previously published lower-bound benchmark for
the problem, due to Brass and Sharifi in 2005.  The certificate data and
verifier are archived with a persistent identifier~\cite{CertificateArchive};
an interactive visualization of the five-parameter placement model is
also available~\cite{PlacementDemo}.  The audit records and final
soundness theorem are given in Supplementary Materials, section S14.

The finite Reuleaux subtest also clarifies the relation to the previous
polygonal finite-test method.  The regular Reuleaux triangle and regular
Reuleaux pentagon contain the corresponding regular polygonal test bodies
used in that comparison and complete them to bodies of constant width.
The same placements that produce a polygonal obstruction are subsumed by
constant-width test bodies, which are legitimate members of the full
reduced problem.  Consequently, at the level of finite-test
values, the Reuleaux test is no weaker than the corresponding polygonal
finite test.  The interval certificate turns this geometric strengthening
into the certified bound \(0.834\).

\section*{Discussion}

The hierarchy organizes the constant-width search into finite-dimensional
finite-arc variational problems and recovers the covering constant in the
limit:
\[
\Lambda_M\uparrow\aLeb.
\]
The quantitative theorem gives a controlled tail estimate:
\[
0\le \aLeb-\Lambda_M\le C M^{-2}.
\]
The certified \(0.834\) bound is the first low-order realization of this
framework: a finite Reuleaux subtest extracted from the hierarchy is
reduced to a finite-dimensional placement problem and certified by
outward-rounded interval arithmetic.  Higher levels, richer finite
subtests, and stronger local interval templates provide a path toward
sharper certified lower bounds approaching \(\aLeb\).

Several mathematical questions remain open.  Although the Reuleaux
finite-arc approximation error has the \(M^{-2}\) scale, the
covering-area functional may smooth some Hausdorff-level approximation
error.  Determining the true asymptotic behavior of
\(\aLeb-\Lambda_M\) remains an interesting problem.  The near-minimizers
of the finite hierarchy levels also deserve further study.  Their
geometry may reveal features of extremal or near-extremal universal
covers, identify informative finite tests, and guide improved upper-bound
constructions.  More broadly, the approach shows how an infinite
geometric covering problem can be organized into finite-dimensional
variational tests together with independently checkable interval
certificates.  This separation between variational search and rigorous
verification may be useful in other extremal problems whose obstruction
sets are infinite but admit structured finite approximants.

\section*{Materials and Methods}

\subsection*{Constant-width reduction}

The original universal-covering problem involves all nonempty compact
sets of diameter at most one.  The first reduction replaces this class by
the planar bodies of constant width one.  Convexification does not
increase diameter, and every compact convex planar set of diameter at
most one is contained in a complete convex body of the same diameter.  In
the plane, complete convex bodies are precisely bodies of constant width
equal to their diameter.  If the diameter is smaller than one, Minkowski
addition with a disk increases the width to one.  This gives the test
class \(\mathcal W_1\) used throughout the paper.  Details are provided
in Supplementary Materials, sections S1 and S2.

\subsection*{Curvature-radius representation and finite arcs}

A width-one constant-width body can be represented by its support
function or, equivalently, by curvature-radius data along the normal
angle.  In this representation, the width-one condition becomes a linear
antipodal constraint, and the closing condition becomes a pair of first
harmonic moment constraints.  General members of \(\mathcal W_1\) may
have nonsmooth boundary and arbitrary curvature-radius distributions.
Finite-arc Reuleaux-type bodies correspond to the special case in which
the curvature-radius density is a step function taking only the values
\(0\) and \(1\) and having finitely many switches.

For fixed odd \(n\), the switch angles give a finite set of parameters.
Closing, antipodal, ordering, and nondegeneracy conditions cut out a
finite-dimensional admissible parameter domain.  Each hierarchy level is
finite-dimensional in this geometric variational sense: it is a
continuous family of bounded finite-arc complexity.
Equivalently, one may use the \(2n\) transition angles of the step
density as local coordinates on each cyclic-ordering cell.  The
antipodal relation, the two closing equations, and the nondegeneracy
inequalities are finite constraints in these coordinates.  No infinite
functional parameter remains at a fixed arc level.  The construction of
\(\mathcal R_n\), \(\mathcal A_M\), and \(\Lambda_M\) is given in
Supplementary Materials, sections S3 and S4.

\subsection*{Bang-bang purification and exactness}

The exactness proof uses a one-dimensional bang-bang purification
principle.  On each angular mesh, one imposes finitely many moment
constraints encoding the first harmonics and the local approximation
moments.  A nonatomic vector-measure purification theorem, equivalently a
finite-switch affine Hobby--Rice principle in this setting, replaces an
admissible density \(0\le\rho\le1\) by a \(0/1\)-valued density with
controlled switch count while preserving the imposed moments.  The
resulting bang-bang density defines a legal finite-arc constant-width
body.  Refining the mesh gives finite-arc bodies converging to the
original constant-width body, which yields density of the hierarchy and
hence exactness by compactness.  The finite-switch purification theorem
and the compactness proof are in Supplementary Materials, section S5.

\subsection*{Quantitative area transfer}

The \(M^{-2}\) estimate comes from a second-order approximation scale.
The moment matching on each mesh interval cancels the leading local
support-function error.  A Green-kernel estimate then bounds the
Hausdorff error between an arbitrary constant-width body and a
finite-arc Reuleaux-type approximant by order \(M^{-2}\).  If a convex
set covers all approximants at level \(M\), then a parallel enlargement
by this Hausdorff radius covers all width-one constant-width bodies.  The
Steiner formula bounds the corresponding area increase, giving
\[
0\le \aLeb-\Lambda_M\le C M^{-2}.
\]
The full quantitative proof, including the Green-kernel estimate and the
area-transfer lemma, is in Supplementary Materials, section S6.

\subsection*{Five-parameter normalization}

For the certified finite subtest, the disk is fixed at the origin.
Global Euclidean freedom and the dihedral symmetries of the regular
Reuleaux triangle fix the orientation of \(R_3\).  The remaining degrees
of freedom are the translation of \(R_3\), the orientation of \(R_5\) in a
fundamental angular interval, and the translation of \(R_5\).  This gives
the five-parameter vector
\[
z=(x_3,y_3,\psi_5,x_5,y_5).
\]
The complete normalization proves that minimizing over this domain is
equivalent to minimizing over all relative congruent placements of the
three test bodies.  The normalization proof is in Supplementary
Materials, section S9.

\subsection*{Interval certification workflow}

The interval certificate separates search from verification.  The
certificate records finite domain data, witness information, and local
method labels; the verifier recomputes all interval lower bounds using
outward-rounded interval arithmetic.  The proof obligation is to show
\[
A(z)\ge0.834
\]
for all normalized configurations \(z\).  Analytic far-field estimates
exclude large translations.  One-body collar certificates reduce the
remaining coarse domain to the compact core box.  The core box is covered
by interval boxes, and each accepted leaf box is certified by one of the
approved local geometric methods.  The global theorem follows because
the certified regions cover the full normalized parameter domain and
each region has a rigorous interval lower bound at target \(0.834\).  The
far-field, collar, core-domain, and global soundness arguments are given
in Supplementary Materials, sections S10 to S12 and S14.

\subsection*{Local lower-bound certificates}

The local certificates all prove the same geometric statement: throughout
an interval box \(B\) in the five-parameter domain, the convex hull
\(\operatorname{conv}(D\cup R_3(z)\cup R_5(z))\) has area at least
\(0.834\).  The direct inner-polygon method selects body-labeled witness
points, verifies their membership by interval arithmetic, and applies an
interval shoelace area bound.  The template convexity method constructs
an ordered polygonal template whose perturbation over \(B\) remains
convex and whose area stays above the target.  The simple-template
mean-value method certifies a simple positively oriented polygon at the
box center and subtracts a rigorous derivative bound over \(B\).  These
methods are mathematically interchangeable at the global level: a leaf
box is accepted only when at least one approved method gives a rigorous
interval lower bound.  The interval predicates for membership,
convexity, simplicity, orientation, and area are specified in
Supplementary Materials, section S13.

\subsection*{Certificate audit}

The archive separates certificate generation from certificate
verification.  For each accepted box, the verifier recomputes the
required geometric inclusions, convexity or simplicity conditions, and
interval area lower bound from the recorded finite data.  The final audit
checks the displayed constants, analytic reductions, collar regions, box
coverage, and local lower-bound certificates.  The final status table
reports PASS, zero errors and warnings, and strict interval margins above
the target value.  The Supplementary Materials give the detailed
certificate schema and the complete global soundness theorem
(Supplementary Materials, sections S12 and S14).

\section*{Acknowledgments}

\paragraph*{Author contributions:}
S.Z. conceived the project, developed the mathematical framework,
implemented the certification workflow, analyzed the results, and wrote
the manuscript.

\paragraph*{Competing interests:}
The author declares no competing interests.

\paragraph*{AI use:}
Large language models were used for language editing, formatting
assistance, and consistency checks.  The author reviewed all such outputs
and is fully responsible for the manuscript.

\paragraph*{Data, code, and materials availability:}
The archived repository contains the certificate data, standalone
verifier, and audit report~\cite{CertificateArchive}.  A visualization
demo is available online~\cite{PlacementDemo}.  No physical materials
were generated in this work.

\clearpage
\linespread{1}\selectfont
\setlength{\emergencystretch}{2em}
\renewcommand{\thesection}{S\arabic{section}}
\renewcommand{\thefigure}{S\arabic{figure}}
\renewcommand{\thetable}{S\arabic{table}}
\renewcommand{\theequation}{S\arabic{equation}}
\setcounter{section}{0}
\setcounter{figure}{0}
\setcounter{table}{0}
\setcounter{equation}{0}

\begin{center}
{\Large\bfseries Supplementary Materials for}\\[0.6em]
{\large\bfseries An exact hierarchy for Lebesgue's universal covering constant and a certified 0.834 lower bound}\\[0.8em]
Shuai~Zeng$^{1\ast}$\\
{\small $^{1}$Chongqing University of Posts and Telecommunications, Chongqing, China.}\\
{\small $^\ast$Corresponding author. Email: zengshuai@cqupt.edu.cn}
\end{center}

\begin{center}
\textbf{This PDF file includes:}\\
Materials and Methods\\
References and Notes
\end{center}

\nocite{Pal1920,Sprague1936,Hansen1992,BaezBagdasaryanGibbs2015,Gibbs2018,
BrassSharifi2005,BonnesenFenchel1934,Eggleston1958,ChakerianGroemer1983,
Schneider2014,Lyapunov1940,DvoretzkyWaldWolfowitz1951,HobbyRice1965,
Warga1972,Moore1966,Neumaier1990,MooreKearfottCloud2009,Tucker2011,
CertificateArchive,PlacementDemo}

\section*{Materials and Methods}

The main text states the logical structure, theorem statements, and proof
mechanisms.  These supplementary materials provide the full mathematical
proofs and certificate-level details.

The sections below give the mathematical and computational methods used
in the main manuscript.  Sections~S1--S6 contain the constant-width
reduction, the Reuleaux-type variational hierarchy, exactness, and
quantitative convergence.  Sections~S7--S14 contain the finite Reuleaux
subtest, the five-parameter normalization, analytic domain reductions,
the Core5D interval certificate, the local lower-bound methods, and the
global certification theorem.  All interval-certification statements
refer to outward-rounded interval arithmetic and to the archived
certificate data and verifier cited in the main manuscript.  No
statistical analysis, statistical sampling, or randomized numerical
experiment is used.

The logical dependencies are as follows.  Sections~S1--S2 formulate the
Lebesgue problem and reduce it to width-one constant-width bodies.
Sections~S3--S6 build the Reuleaux-type hierarchy and prove its
exactness and quantitative convergence.  Sections~S7--S14 then use a
low-order regular finite subtest from this hierarchy, reduce its
placements to a five-parameter domain, and verify the remaining finite
certificate.  The interval certificate is a deterministic verification
of archived rational boxes and finite witness data: the verifier
reconstructs constants and domains, checks the analytic and collar
reductions, verifies exact coverage of the certified domains, and
recomputes a rigorous local lower bound on every accepted leaf box.

\begin{center}
\begin{tabular}{p{0.17\textwidth}p{0.70\textwidth}}
\(\mathcal W_1\) & planar bodies of constant width \(1\)\\
\(\mathcal R_n\) & width-one Reuleaux-type bodies with \(n\) circular arcs\\
\(\mathcal A_M\) & \(M\)-th finite-arc hierarchy level,
\(\{D\}\cup\bigcup_{m=1}^M\mathcal R_{2m+1}\)\\
\(\Lambda_M\) & variational lower-bound value \(\Lambda(\mathcal A_M)\)\\
\(\mathcal F_{3,5}^{\rm R}\) & regular finite Reuleaux test family
\(\{D,R_3,R_5\}\)\\
\(L_{3,5}^{\rm R}\) & finite-test value \(\Lambda(\mathcal F_{3,5}^{\rm R})\)
\end{tabular}
\end{center}

\begin{center}
\begin{tabular}{p{0.10\textwidth}p{0.78\textwidth}}
S1 & Problem formulation\\
S2 & Reduction to constant-width bodies\\
S3 & Reuleaux-type constant-width test families\\
S4 & Variational lower-bound hierarchy\\
S5 & Exactness of the variational hierarchy\\
S6 & Quantitative convergence of the hierarchy\\
S7 & Organization of the certified finite-test proof\\
S8 & Finite Reuleaux test family\\
S9 & Complete five-parameter normalization\\
S10 & Analytic reduction to a bounded parameter domain\\
S11 & Collar certificates and reduction to the core box\\
S12 & Core5D interval certificate\\
S13 & Local lower-bound methods\\
S14 & Interval certification and the certified \(0.834\) theorem
\end{tabular}
\end{center}

\section{Problem Formulation}
\label{sec:problem}

We write $\mathcal E(2)$ for the full group of Euclidean isometries of the
plane, including reflections.  Throughout, area means two-dimensional
Lebesgue measure.  For a nonempty compact set $K\subset\R^2$, its
diameter is
\[
\operatorname{diam}(K)
=
\sup\{|x-y|:x,y\in K\}.
\]

A compact convex set $U\subset\R^2$ is called a \emph{universal
cover} for planar sets of diameter at most one if, for every nonempty
compact set $K\subset\R^2$ with
\[
\operatorname{diam}(K)\le 1,
\]
there exists an isometry $g\in\mathcal E(2)$ such that
\[
gK\subset U.
\]

The Lebesgue universal covering constant is defined as
\[
\aLeb
=
\inf\left\{
\operatorname{area}(U):
\begin{array}{l}
U\subset\R^2 \text{ is a compact convex universal cover}\\
\text{for all planar sets of diameter at most one}
\end{array}
\right\}.
\]

That is, $\aLeb$ is the infimum of the areas of planar convex
sets that contain a congruent copy of every nonempty compact set of
diameter at most $1$.

Throughout the paper, a \emph{convex body} means a compact convex
subset of $\R^2$ with nonempty interior.  The next step is to
replace the class of arbitrary nonempty compact sets of diameter at most
one by the geometrically more structured class of constant-width bodies.  This
reduction is carried out in the following section and provides the
starting point for the variational hierarchy.

\section{Reduction to Constant-Width Bodies}
\label{sec:constant-width}

The original formulation of Lebesgue's universal covering problem can be
reduced to the geometry of planar bodies of constant width.  This
classical reduction is the entry point for the Reuleaux-type hierarchy,
which is built from structured subfamilies of constant-width bodies.

It is enough to consider convex test sets.  If
$K\subset\R^2$ is nonempty and compact, then
\[
K\subseteq \operatorname{conv}(K),
\]
and
\[
\operatorname{diam}(\operatorname{conv}(K))
=
\operatorname{diam}(K).
\]
Hence any set that contains a congruent copy of every nonempty compact convex
set of diameter at most $1$ also contains a congruent copy of every
nonempty compact set of diameter at most $1$.  We may therefore restrict
attention to the class
\[
\mathcal K_1
:=
\{K\subset\R^2:
K\text{ is nonempty, compact, and convex, and }
\operatorname{diam}(K)\le 1\}.
\]

For a nonempty compact convex set $K$, let
\[
h_K(u)
=
\max_{x\in K}\langle x,u\rangle,
\qquad u\in S^1,
\]
denote its support function.  The width of $K$ in the direction $u$ is
\[
w_K(u)
=
h_K(u)+h_K(-u).
\]
We say that $K$ has \emph{constant width $1$} if
\[
w_K(u)=1
\qquad
\text{for every }u\in S^1.
\]

Let
\[
\mathcal W_1
:=
\{W\subset\R^2:
W\text{ is a convex body of constant width }1\}
\]
be the family of planar constant-width bodies of width $1$.

The following standard completion result is the key reduction; see, for
example, classical treatments of complete convex bodies and bodies of
constant width in planar convex geometry
~\cite{BonnesenFenchel1934,Eggleston1958,ChakerianGroemer1983,Schneider2014}.

\begin{lemma}\label{lem:completion}
For every $K\in\mathcal K_1$, there exists $W_K\in\mathcal W_1$ such
that
\[
K\subseteq W_K.
\]
\end{lemma}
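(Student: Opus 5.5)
The plan is to prove the lemma in two stages. First we enlarge $K$ to a complete body of diameter exactly $d:=\operatorname{diam}(K)$. Then we pass from width $d$ to width $1$ by a Minkowski sum with a disk. The degenerate case $K=\{p\}$ (so $d=0$) is handled directly: take $W_K$ to be the disk of radius $1/2$ centred at $p$. Assume from now on that $0<d\le 1$.

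\emph{Stage one: completion.} We use the classical completion construction via a maximality argument. Consider the family
\[
\mathcal C(K):=\{L\supseteq K:\ L \text{ compact convex},\ \operatorname{diam}(L)\le d\},
\]
partially ordered by inclusion. Every member lies in the closed ball of radius $d$ about any fixed point of $K$. So by the Blaschke selection theorem a chain in $\mathcal C(K)$ has its closure of the union as an upper bound: this closure is compact and convex, contains $K$, and has diameter at most $d$ because diameter is continuous under increasing limits. Zorn's lemma then gives a maximal element $L$.

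The key step is to show that a maximal $L$ has constant width $d$. Suppose instead that $w_L(u)<d$ for some direction $u$. We would exhibit a point $x\notin L$ with $L\cup\{x\}$ still of diameter at most $d$. The equivalent formulation is
\[
\bigcap_{y\in L}B(y,d)\ \supsetneq\ L,
\]
and we verify it by the following construction. Take a support line of $L$ with outer normal $u$, and a point $p$ of $L$ on that line. Move $p$ outward by a small amount $\varepsilon$ in direction $u$. The new point stays within distance $d$ of all points of $L$: every point of $L$ within distance roughly $d$ of $p$ lies near the opposite side, and since the width is strictly less than $d$ there is slack there. This is the step I expect to be the main obstacle. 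To handle it cleanly, I would instead use the standard characterization that $L$ is complete if and only if
\[
L=\bigcap_{y\in L}B(y,d),
\]
and that in the plane this intersection, for a maximal $L$, has support function satisfying
\[
h(u)+h(-u)=d,
\]
citing \cite{Eggleston1958,BonnesenFenchel1934} for the equivalence "complete $\iff$ constant width". Since the paper permits citing these classical facts, I would cite this equivalence rather than reprove it.

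\emph{Stage two: rescaling the width.} Given $L$ of constant width $d\le1$ containing $K$, set
\[
W_K:=L+\tfrac{1-d}{2}B,
\]
where $B$ is the closed unit disk. Support functions add under Minkowski sums, so
\[
h_{W_K}(u)+h_{W_K}(-u)=d+(1-d)=1 .
\]
The set $W_K$ is compact and convex with nonempty interior, since $d>0$ or the added disk has positive radius. Finally, $K\subseteq L\subseteq W_K$ because $0\in\tfrac{1-d}{2}B$. Hence $W_K\in\mathcal W_1$, which proves the lemma.
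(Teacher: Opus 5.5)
Your proposal is correct and follows the paper's proof: you treat the point case with the disk of radius $1/2$, complete $K$ to a body of constant width $d$ using the classical completion theorem and the planar equivalence ``complete $\iff$ constant width'' (your Zorn's-lemma maximality argument only makes the cited completion step explicit), and then add the disk of radius $(1-d)/2$, exactly as the paper does. You were right to drop the push-outward heuristic in favour of citation, because it fails as stated: since $\langle p-y,u\rangle\ge0$ for every $y\in L$, pushing $p$ along $u$ strictly increases its distance to any diametral partner. For example, take an equilateral triangle of side $d$ with $u$ pointing to a vertex $v$; then $w(u)=\tfrac{\sqrt3}{2}d<d$, yet moving $v$ outward takes it farther than $d$ from the other two vertices.
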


\begin{proof}
Let $\delta=\operatorname{diam}(K)\le 1$.  If \(\delta=0\), then
\(K=\{p\}\) for some point \(p\).  The disk centered at \(p\) with
radius \(1/2\) has constant width \(1\), contains \(K\), and belongs to
\(\mathcal W_1\).  This proves the claim in this case.

Assume now that \(\delta>0\).  By the classical completion theorem for
planar compact convex sets of positive diameter, \(K\) is contained in a
complete convex body \(C\) of diameter \(\delta\).  In the plane,
complete convex bodies are precisely convex bodies of constant width
equal to their diameter
~\cite{BonnesenFenchel1934,ChakerianGroemer1983,Schneider2014}, so \(C\) has constant
width \(\delta\).

If $\delta=1$, we may take $W_K=C$.  If $\delta<1$, let
\[
r=\frac{1-\delta}{2}
\]
and define the parallel body
\[
W_K=C+rD_0,
\]
where
\[
D_0:=\{x\in\R^2: |x|\le 1\}
\]
is the unit disk centered at the origin.  Adding the disk $rD_0$
increases every width by $2r$, hence $W_K$ has constant width
\[
\delta+2r
=
\delta+(1-\delta)
=
1.
\]
Moreover \(W_K\in\mathcal W_1\), and \(K\subseteq C\subseteq W_K\), as
claimed.
\end{proof}

Define the corresponding constant-width covering value
\[
a_{\mathrm{cw}}
:=
\inf\left\{
\operatorname{area}(U):
\begin{array}{l}
U\subset\R^2 \text{ is compact and convex, and for every }W\in\mathcal W_1,\\
\text{there exists }g\in\mathcal E(2)\text{ with }gW\subset U
\end{array}
\right\}.
\]

The original Lebesgue constant is exactly this constant-width covering
value.

\begin{proposition}\label{prop:cw-equal}
One has
\[
a_{\mathrm{cw}}=\aLeb.
\]
\end{proposition}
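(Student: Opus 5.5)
The plan is to prove the two inequalities \(a_{\mathrm{cw}}\le\aLeb\) and \(\aLeb\le a_{\mathrm{cw}}\) separately. Each follows by comparing the two families of admissible covers: every cover admissible for one infimum is admissible for the other. So I will show that the two admissible classes coincide, and equality of the infima then follows.

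For the first inclusion, let \(U\) be a compact convex universal cover for all planar sets of diameter at most one. Take any \(W\in\mathcal W_1\). Since \(W\) has constant width \(1\), its diameter is at most \(1\). For \(x,y\in W\) with \(x\ne y\), set \(u=(x-y)/|x-y|\); then
\[
|x-y|=\langle x-y,u\rangle\le h_W(u)+h_W(-u)=w_W(u)=1 .
\]
Also \(W\) is nonempty and compact. Hence some \(g\in\mathcal E(2)\) satisfies \(gW\subset U\). This shows that \(U\) is admissible in the definition of \(a_{\mathrm{cw}}\), and taking infima gives \(a_{\mathrm{cw}}\le\aLeb\).

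For the reverse inclusion, let \(U\) be compact and convex and contain a congruent copy of every \(W\in\mathcal W_1\). Take any nonempty compact \(K\) with \(\operatorname{diam}(K)\le1\). As noted before Lemma~\ref{lem:completion}, \(\operatorname{conv}(K)\) lies in \(\mathcal K_1\), since it is nonempty, compact and convex and has the same diameter as \(K\). Lemma~\ref{lem:completion} then supplies \(W_K\in\mathcal W_1\) with \(K\subseteq\operatorname{conv}(K)\subseteq W_K\). Choose \(g\) with \(gW_K\subset U\); then \(gK\subset U\). So \(U\) is a universal cover, and therefore \(\aLeb\le a_{\mathrm{cw}}\).

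The only step that needs any care is the first one: the diameter bound for constant-width bodies, and the observation that \(\mathcal W_1\) sits inside the original test class. Both are routine, using the support-function estimate above. The rest is bookkeeping with the definitions and the completion lemma. One remark on the degenerate case: an empty admissible class would make both infima equal to \(+\infty\), but this cannot happen here, because Pál's hexagon is admissible for both problems. In any case, the coincidence of the admissible classes settles equality without needing this.
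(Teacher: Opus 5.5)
Your proposal is correct and follows essentially the same route as the paper: you compare the two admissible classes directly, and use Lemma~\ref{lem:completion} (via \(\operatorname{conv}(K)\in\mathcal K_1\)) for the reverse inclusion. Your support-function check that constant-width-one bodies have diameter at most \(1\) fills in the step the paper simply asserts.
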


\begin{proof}
Since every $W\in\mathcal W_1$ has diameter $1$, every universal cover
for planar sets of diameter at most one is in particular a cover for all
members of $\mathcal W_1$.  Therefore
\[
a_{\mathrm{cw}}\le \aLeb.
\]

Conversely, suppose $U$ is a compact convex set containing a congruent
copy of every $W\in\mathcal W_1$.  Let $K\in\mathcal K_1$.  By
Lemma~\ref{lem:completion}, there exists $W_K\in\mathcal W_1$ such
that
\[
K\subseteq W_K.
\]
Since $U$ covers $W_K$, there exists $g\in\mathcal E(2)$ such that
\[
gW_K\subset U.
\]
Then
\[
gK\subseteq gW_K\subset U.
\]
The set $U$ therefore covers every member of $\mathcal K_1$, and hence
every nonempty compact set of diameter at most $1$.  Thus
\[
\aLeb\le a_{\mathrm{cw}}.
\]

Combining the two inequalities gives
\[
a_{\mathrm{cw}}=\aLeb.
\]
\end{proof}

Lebesgue's universal covering problem may therefore be studied through
the family $\mathcal W_1$ of constant-width bodies.  Henceforth
we identify the constant-width covering value $a_{\mathrm{cw}}$ with
$\aLeb$ and work with $\mathcal W_1$ as the ambient test family.  The
next section constructs an increasing variational hierarchy by replacing
the full family $\mathcal W_1$ with structured Reuleaux-type
subfamilies.

\section{Reuleaux-Type Constant-Width Test Families}
\label{sec:reuleaux}

After the constant-width reduction, the ambient test family is
$\mathcal W_1$, the planar bodies of constant width $1$.  The variational
hierarchy will use the following structured subfamilies.

Let
\[
D
=
\{x\in\R^2: |x|\le 1/2\}
\]
be the disk of radius $1/2$.  It is the most symmetric member of
$\mathcal W_1$.  The disk plays the role of the infinite-arc limiting
member and is included explicitly in the hierarchy.

For a constant-width body $K\in\mathcal W_1$, write
\[
h_K(\theta)=h_K((\cos\theta,\sin\theta))
\]
for its support function in normal-angle coordinates, and define its
curvature-radius measure by
\[
\mu_K=h_K+h_K''
\]
in the sense of distributions.  For bodies of constant width \(1\), the
curvature-measure identity
\[
\mu_K+\tau_\pi\mu_K=d\theta
\]
holds, where \(\tau_\pi\) denotes push-forward under
\(\theta\mapsto\theta+\pi\).  This is obtained by applying
\(1+d^2/d\theta^2\) to the width identity
\(h_K(\theta)+h_K(\theta+\pi)=1\).  Since both terms on the left are
nonnegative measures, this identity implies \(\mu_K\ll d\theta\).  We
therefore write
\[
d\mu_K(\theta)=\rho_K(\theta)\,d\theta.
\]
The density satisfies the antipodal relation
\[
\rho_K(\theta)+\rho_K(\theta+\pi)=1
\]
for a.e. \(\theta\).

For an odd integer \(n\ge3\), let \(\mathcal R_n\) denote the family of
constant-width bodies \(K\in\mathcal W_1\) whose curvature-radius
density \(\rho_K\) is a \(0/1\)-valued step function and for which the
set
\[
\{\theta\in\R/2\pi\Z:\rho_K(\theta)=1\}
\]
has exactly \(n\) connected components.  Step functions are always
understood modulo equality a.e. on \(S^1=\R/2\pi\Z\).  When counting
components, we choose a finite-interval representative, delete
zero-length intervals, and merge adjacent intervals; the count is then
taken in the circle topology.  In this density description the
antipodal relation above holds, and the usual closing conditions are
\[
\int_0^{2\pi}\rho_K(\theta)\cos\theta\,d\theta=0,
\qquad
\int_0^{2\pi}\rho_K(\theta)\sin\theta\,d\theta=0.
\]
Geometrically, the intervals on which \(\rho_K=1\) correspond to
circular boundary arcs of radius \(1\), while intervals on which
\(\rho_K=0\) correspond to vertex normal cones.

For fixed \(n\), this is a finite-dimensional geometric family.  After
quotienting translations, a member is described by finitely many
arc-transition angles on \(S^1\), together with the antipodal relation,
the two closing equations, and the cyclic ordering and nondegeneracy
conditions.  Rotations act by adding a common angle to all transition
angles.  Equivalently, \(\mathcal R_n\) is a finite-parameter
Reuleaux-type family.

The following standard converse justifies this density-based
definition.

\begin{lemma}[Finite-step curvature densities]\label{lem:finite-step-density}
Let \(\rho:S^1\to\{0,1\}\) be a finite step function, understood up to
a.e. equality, and suppose that
\[
\rho(\theta)+\rho(\theta+\pi)=1
\quad\text{for a.e. }\theta.
\]
Assume also the closing conditions
\[
\int_0^{2\pi}\rho(\theta)\cos\theta\,d\theta=0,
\qquad
\int_0^{2\pi}\rho(\theta)\sin\theta\,d\theta=0.
\]
Then there exists a compact convex body \(K\), unique up to translation,
whose support function satisfies
\[
h_K+h_K''=\rho
\]
in the sense of distributions, and \(K\) has constant width \(1\).
If the set \(\{\rho=1\}\) has an odd number \(n\ge3\) of nondegenerate
connected components in the sense described above, then
\(K\in\mathcal R_n\).
Conversely, every \(K\in\mathcal R_n\) arises from such a density.
\end{lemma}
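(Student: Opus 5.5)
The plan is to build \(K\) explicitly from \(\rho\) by integrating the support equation, then check constant width and convexity directly.

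First I would construct the support function. Define
\[
h(\theta)=\int_0^{\theta}\rho(s)\sin(\theta-s)\,ds,
\]
the standard solution of \(h+h''=\rho\) with \(h(0)=h'(0)=0\). The closing conditions are exactly what make this well defined on \(S^1\). Expanding \(\sin(\theta-s)\) shows that \(h(\theta+2\pi)-h(\theta)\) equals
\[
\sin\theta\int_0^{2\pi}\rho\cos s\,ds-\cos\theta\int_0^{2\pi}\rho\sin s\,ds,
\]
which vanishes, and the same holds for \(h'\). So \(h\) is a \(C^1\), \(2\pi\)-periodic function with \(h+h''=\rho\ge0\) in the sense of distributions.

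Next I would show \(h\) is the support function of a convex body. The classical criterion is that a \(2\pi\)-periodic function \(h\) with \(h+h''\ge0\) as a measure is the support function of a compact convex set. To see this concretely, parametrize
\[
x(\theta)=h(\theta)u(\theta)+h'(\theta)u'(\theta),
\qquad u(\theta)=(\cos\theta,\sin\theta).
\]
Then \(x'(\theta)=\rho(\theta)u'(\theta)\) almost everywhere. This curve is closed by the closing conditions. Its tangent direction turns monotonically, since it moves only in direction \(u'\) with nonnegative speed, so it bounds a convex set \(K\) with \(h_K=h\). If one prefers to avoid this, an equivalent route is to write \(K=\bigcap_\theta\{y:\langle y,u(\theta)\rangle\le h(\theta)\}\) and check \(h_K=h\) via the nonnegativity of \(h+h''\).

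Uniqueness up to translation follows because any two solutions of \(h+h''=\rho\) differ by a solution of the homogeneous equation, \(a\cos\theta+b\sin\theta=\langle(a,b),u\rangle\), which is the support function of a point.

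For constant width, set \(w(\theta)=h(\theta)+h(\theta+\pi)\). By the antipodal relation, \(w+w''=1\), so \(w=1+a\cos\theta+b\sin\theta\). But \(w\) is \(\pi\)-periodic while \(a\cos\theta+b\sin\theta\) is \(\pi\)-antiperiodic, which forces \(a=b=0\) and hence \(w\equiv1\). Width \(1>0\) also gives nonempty interior, so \(K\) is a convex body.

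The remaining statements are definitional. By construction \(\mu_K=\rho\,d\theta\), so \(\rho_K=\rho\) almost everywhere. If \(\{\rho=1\}\) has \(n\) odd nondegenerate components, then \(K\in\mathcal R_n\) by the definition of \(\mathcal R_n\). Conversely, for \(K\in\mathcal R_n\), Section~S3 already shows \(\mu_K=\rho_K\,d\theta\) with the antipodal relation. The closing conditions are \(\int\mu_K\cos\theta\,d\theta=\int\mu_K\sin\theta\,d\theta=0\), obtained by integrating \((h+h'')\cos\theta\) by parts on the circle. So \(K\) arises from its own \(\rho_K\).

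The main obstacle is the second step: making rigorous that a periodic \(C^1\) function \(h\) with \(h+h''\ge0\) is exactly a support function, with \(h_K=h\) rather than merely \(h_K\le h\). I would handle it through the explicit curve \(x(\theta)\). Since \(\rho\) is a finite step function, this curve is a concatenation of unit-radius arcs and corner points, so it is manifestly a closed convex curve whose outward normal at parameter \(\theta\) is \(u(\theta)\). This gives \(\langle x(\theta),u(\theta)\rangle=h(\theta)\) as the maximum over \(K\) in direction \(u(\theta)\).
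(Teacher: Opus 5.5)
Your proof is correct, but it takes a different route from the paper at the existence step. The paper never constructs \(h\). It checks that \(\rho\,d\theta\) is a nonnegative measure of mass \(\pi\), not concentrated on an antipodal pair, with vanishing first moment, and then cites the planar Minkowski existence theorem, which gives existence and uniqueness up to translation at once. Its constant-width step is the same as yours: \(w+w''=1\) together with \(\pi\)-periodicity. It treats membership in \(\mathcal R_n\) and the converse as essentially definitional, and simply asserts the arc/vertex description.

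You instead build \(h\) by variation of parameters, get periodicity of \(h\) and \(h'\) from the closing conditions, and write down the boundary \(x(\theta)=h u+h'u'\) with \(x'=\rho u'\). This is self-contained rather than resting on an external theorem. It also proves what the paper only states: where \(\rho=1\) the curve traces unit-radius arcs, and where \(\rho=0\) it is stationary, giving a corner. Your converse also verifies the closing conditions explicitly, by integrating \((h+h'')\cos\theta\) and \((h+h'')\sin\theta\) by parts. The paper's citation is shorter but hides exactly the point you flagged as the main obstacle.

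You can close that obstacle without appealing to ``monotone turning'' of the tangent or to a Jordan-curve argument:
\begin{itemize}
\item Take \(K=\operatorname{conv}\,x(S^1)\).
\item Compute \(\frac{d}{d\phi}\langle x(\phi),u(\theta)\rangle=\rho(\phi)\sin(\theta-\phi)\).
\item So \(\phi\mapsto\langle x(\phi),u(\theta)\rangle\) is nondecreasing on \((\theta-\pi,\theta)\) and nonincreasing on \((\theta,\theta+\pi)\).
\item Its maximum is therefore \(\langle x(\theta),u(\theta)\rangle=h(\theta)\), which gives \(h_K=h\) directly.
\end{itemize}
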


\begin{proof}
The measure \(\rho(\theta)\,d\theta\) is nonnegative and, by the
antipodal identity, has total mass \(\pi\).  Since it is absolutely
continuous with respect to Lebesgue measure, it is not concentrated on
a pair of antipodal points.  The two closing conditions are precisely
the vanishing first moment condition for the surface-area measure in
Minkowski's existence theorem.  Hence the planar Minkowski theorem in
support-function form~\cite{Schneider2014} gives a compact convex body,
unique up to translation, with curvature-radius measure
\(\rho(\theta)\,d\theta\), equivalently \(h_K+h_K''=\rho\).

Let \(w(\theta)=h_K(\theta)+h_K(\theta+\pi)\).  Then
\[
w+w''
=
\rho(\theta)+\rho(\theta+\pi)
=1.
\]
Because \(w\) is \(\pi\)-periodic, the first-harmonic terms in the
general solution of \(w+w''=1\) vanish, so \(w\equiv1\).  Thus \(K\)
has constant width \(1\).  On intervals where \(\rho=1\), the boundary
has curvature radius \(1\); where \(\rho=0\), the boundary contribution
is a vertex normal cone.  Therefore the nondegenerate components of
\(\{\rho=1\}\) are exactly the circular arcs of the Reuleaux-type
boundary.  This proves the asserted membership in \(\mathcal R_n\), and
the converse follows immediately from the definition of
\(\mathcal R_n\).
\end{proof}

For an arc-complexity cutoff \(q\ge3\), define
\[
\mathcal R_{\le q}
:=
\bigcup_{\substack{3\le n\le q\\ n\ \mathrm{odd}}}\mathcal R_n.
\]
The notation \(\mathcal R_{\le q}\) uses \(q\) as an arc-complexity cutoff,
whereas the hierarchy level \(\mathcal A_M\) defined below uses \(M\)
as a level index and is normalized so that its largest odd arc count is
\(2M+1\).  In particular, \(\mathcal R_3\) contains Reuleaux triangles
of width \(1\), while \(\mathcal R_5\) contains Reuleaux pentagons of
width \(1\).

Throughout the paper, calligraphic symbols such as $\mathcal R_n$
denote families of Reuleaux-type bodies.  In contrast, ordinary symbols
such as $R_3$ and $R_5$ are reserved for the fixed regular
representatives used in the finite test model of Part~II.

We shall use the disk $D$ together with these finite-arc families to
define the hierarchy.  The hierarchy index \(M\) counts the odd
Reuleaux levels rather than the maximum number of arcs.  For
\(M=1,2,3,\ldots\), we set
\[
\mathcal A_M
=
\{D\}\cup\bigcup_{m=1}^{M}\mathcal R_{2m+1}
=
\{D\}\cup\mathcal R_3\cup\mathcal R_5\cup\cdots\cup\mathcal R_{2M+1}.
\]
Using the cutoff notation introduced above,
\[
\mathcal A_M=\{D\}\cup\mathcal R_{\le 2M+1}.
\]

Here and below, the calligraphic symbol $\mathcal A_M$ denotes a test
family.  The ordinary symbol $A(z)$ introduced later denotes the area of
a finite-dimensional configuration.

The family $\mathcal A_M$ is a structured subfamily of $\mathcal W_1$.
As $M$ increases by one, the next odd Reuleaux level is added, and the
associated covering constraints become stronger.  These families provide
a finite-arc approximation to the full class $\mathcal W_1$ and, at each
fixed level, a finite-dimensional variational lower-bound problem from
which regular finite subtests can be extracted for certified computation.
In particular, the low-order truncation
\[
\{D\}\cup \mathcal R_3\cup\mathcal R_5
\]
will later lead to the finite-dimensional convex-hull problem associated
with the regular finite test family $\mathcal F_{3,5}^{\rm R}=\{D,R_3,R_5\}$.

\section{A Variational Lower-Bound Hierarchy}
\label{sec:variational-hierarchy}

The Reuleaux-type test families give variational lower bounds through the
following functional.

For any subfamily $\mathcal F\subseteq\mathcal W_1$, define
\[
\Lambda(\mathcal F)
=
\inf\left\{
\operatorname{area}(U):
\begin{array}{l}
U\subset\R^2 \text{ is compact and convex, and for every }K\in\mathcal F,\\
\text{there exists }g_K\in\mathcal E(2)\text{ such that }g_KK\subset U
\end{array}
\right\}.
\]
The value $\Lambda(\mathcal F)$ is the smallest possible area, in the
infimum sense, of a convex set that contains a congruent copy of every
member of $\mathcal F$.

For finite test families this definition has a useful equivalent form.
If
\[
\mathcal F=\{K_1,\ldots,K_N\},
\]
then
\[
\Lambda(\mathcal F)
=
\inf_{g_1,\ldots,g_N\in\mathcal E(2)}
\operatorname{area}
\operatorname{conv}
\left(
\bigcup_{j=1}^N g_jK_j
\right).
\]
Indeed, for any fixed placements \(g_1K_1,\ldots,g_NK_N\), the convex
hull
\[
\operatorname{conv}\left(\bigcup_{j=1}^N g_jK_j\right)
\]
is an admissible compact convex set for the family \(\mathcal F\), so
\(\Lambda(\mathcal F)\) is no larger than the right-hand side above.
Conversely, if \(U\) is admissible for \(\mathcal F\), then for each
\(j\) there exists \(g_j\in\mathcal E(2)\) such that \(g_jK_j\subset U\).
Since \(U\) is convex,
\[
\operatorname{conv}\left(\bigcup_{j=1}^N g_jK_j\right)
\subset U,
\]
and therefore
\[
\operatorname{area}
\operatorname{conv}\left(\bigcup_{j=1}^N g_jK_j\right)
\le \operatorname{area}(U).
\]
Taking the infimum over all admissible \(U\) gives the reverse
inequality.

Apply this lower-bound functional to the Reuleaux-type families
\[
\mathcal A_M
=
\{D\}\cup\bigcup_{m=1}^{M}\mathcal R_{2m+1},
\]
defined in the previous section.  The $M$-th variational lower-bound
value is
\[
\Lambda_M
:=
\Lambda(\mathcal A_M).
\]

In expanded form,
\[
\Lambda_M
=
\inf\left\{
\operatorname{area}(U):
\begin{array}{l}
U\subset\R^2 \text{ is compact and convex, and }U\\
\text{contains a congruent copy of every }K\in\mathcal A_M
\end{array}
\right\}.
\]

The values form a hierarchy of increasingly constrained covering problems.
Since
\[
\mathcal A_M\subseteq \mathcal W_1,
\]
every universal cover for all constant-width bodies also covers every
member of $\mathcal A_M$.  By Proposition~\ref{prop:cw-equal}, this
immediately gives
\[
\Lambda_M\le \aLeb
\qquad
\text{for every }M.
\]

The hierarchy is also monotone.

\begin{proposition}\label{prop:monotone}
For $M_1\le M_2$, one has
\[
\Lambda_{M_1}\le \Lambda_{M_2}.
\]
Consequently,
\[
\Lambda_1\le \Lambda_2\le\Lambda_3\le\cdots\le \aLeb.
\]
\end{proposition}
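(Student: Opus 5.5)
The argument is a direct monotonicity of the functional \(\Lambda\) under inclusion of test families; no geometry of the individual bodies is needed. First, I will record the general principle that \(\mathcal F\subseteq\mathcal G\subseteq\mathcal W_1\) implies \(\Lambda(\mathcal F)\le\Lambda(\mathcal G)\). Indeed, let \(U\) be admissible for \(\mathcal G\), meaning \(U\) is compact, convex, and for every \(K\in\mathcal G\) there is \(g_K\in\mathcal E(2)\) with \(g_KK\subset U\). Then \(U\) is in particular admissible for \(\mathcal F\), since every \(K\in\mathcal F\) lies in \(\mathcal G\). So the set of admissible sets for \(\mathcal G\) is contained in the set of admissible sets for \(\mathcal F\). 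An infimum over a smaller set of areas is at least the infimum over the larger set, which gives \(\Lambda(\mathcal F)\le\Lambda(\mathcal G)\). I will also note that both infima are over nonempty sets, so both values are finite. For example, the P\'al hexagon (or any sufficiently large disk) is admissible for all of \(\mathcal W_1\).

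Second, I will check the inclusion \(\mathcal A_{M_1}\subseteq\mathcal A_{M_2}\) for \(M_1\le M_2\). This is immediate from the definition
\[
\mathcal A_M=\{D\}\cup\bigcup_{m=1}^{M}\mathcal R_{2m+1},
\]
because the union defining \(\mathcal A_{M_1}\) runs over a subset of the indices used for \(\mathcal A_{M_2}\). Applying the principle from the first step gives \(\Lambda_{M_1}\le\Lambda_{M_2}\).

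Third, for the chain, I will take consecutive indices to get \(\Lambda_M\le\Lambda_{M+1}\). I will then apply the same principle with \(\mathcal G=\mathcal W_1\), using \(\mathcal A_M\subseteq\mathcal W_1\) and the identification \(\Lambda(\mathcal W_1)=a_{\mathrm{cw}}=\aLeb\) from Proposition~\ref{prop:cw-equal}, which gives \(\Lambda_M\le\aLeb\) for every \(M\).

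I do not expect a genuine obstacle. The only point needing care is to state the inequality between infima correctly: admissibility sets shrink as the test family grows, so the infimum can only increase.
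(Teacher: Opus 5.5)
Your proposal is correct and matches the paper's proof: it also argues via the inclusion $\mathcal A_{M_1}\subseteq\mathcal A_{M_2}$, the reverse inclusion of admissible classes, and comparison of infima. It also obtains $\Lambda_M\le\aLeb$ from $\mathcal A_M\subseteq\mathcal W_1$ together with Proposition~\ref{prop:cw-equal}.
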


\begin{proof}
If $M_1\le M_2$, then
\[
\bigcup_{m=1}^{M_1}\mathcal R_{2m+1}
\subseteq
\bigcup_{m=1}^{M_2}\mathcal R_{2m+1},
\]
and hence
\[
\mathcal A_{M_1}\subseteq \mathcal A_{M_2}.
\]
A convex set that covers every member of $\mathcal A_{M_2}$ also
covers every member of $\mathcal A_{M_1}$.  Therefore the admissible
class defining $\Lambda_{M_2}$ is contained in the admissible class
defining $\Lambda_{M_1}$.  Taking infima gives
\[
\Lambda_{M_1}\le \Lambda_{M_2}.
\]
\end{proof}

The sequence \(\{\Lambda_M\}\) is a monotone hierarchy of finite-dimensional,
finite-complexity variational lower-bound problems for \(\aLeb\).
Finite subtests of these levels can be extracted for certified
lower-bound computations, as carried out in Part~II.  The main
theoretical question is whether the hierarchy loses any information in
the limit; the exactness theorem answers this question.

\section{Exactness of the Variational Hierarchy}
\label{sec:exactness}

The preceding section constructed a monotone sequence of lower-bound
values
\[
\Lambda_M=\Lambda(\mathcal A_M),
\qquad
\mathcal A_M
=
\{D\}\cup\bigcup_{m=1}^{M}\mathcal R_{2m+1},
\]
satisfying
\[
\Lambda_M\le \aLeb.
\]
No information is lost in the limit: the hierarchy
converges exactly to the Lebesgue universal covering constant.

We shall use the Hausdorff distance $d_H$ on compact subsets of $\R^2$.
Since all covering constraints are invariant under Euclidean motions, the
approximation estimates are stated in the corresponding congruence-invariant
shape distance
\[
d_{\mathcal E}(K,L)
:=
\inf_{q\in\mathcal E(2)} d_H(K,qL).
\]
In the support-function estimates below we fix the orientation produced by
the curvature-radius construction and remove translations by imposing
vanishing first harmonics.  This yields an ordinary Hausdorff estimate for
one representative, and hence an estimate in $d_{\mathcal E}$.  Whenever
an ordinary Hausdorff convergence statement is needed,
we choose Euclidean representatives realizing, or arbitrarily nearly
realizing, this infimum.  This convention only concerns the placement of test
bodies; it does not change any covering value, because each test body may be
placed by an arbitrary isometry inside the cover.

\subsection{A bang-bang selection tool}

The finite-dimensional purification input used below is the following.

\begin{lemma}[Bang-bang moment selection]\label{lem:bang-bang}
Let $I=[a,b]$, let $0\le r\le1$ be a measurable function on $I$, and
let
\[
\phi_1,\ldots,\phi_q\in L^1(I).
\]
Then there exists a measurable set $E\subset I$ such that
\[
\int_E \phi_\ell(\theta)\,d\theta
=
\int_I r(\theta)\phi_\ell(\theta)\,d\theta,
\qquad
\ell=1,\ldots,q.
\]
\end{lemma}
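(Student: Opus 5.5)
This is Lyapunov's convexity theorem applied to the nonatomic vector measure $\nu(A)=\bigl(\int_A\phi_1,\ldots,\int_A\phi_q\bigr)$ on $I$; I would give the standard extreme-point proof directly. Consider the set
\[
\mathcal C=\Bigl\{f\in L^\infty(I):0\le f\le1,\ \int_I f\phi_\ell=\int_I r\phi_\ell\ (\ell=1,\ldots,q)\Bigr\}.
\]
It is nonempty, since $r\in\mathcal C$. It is convex. Because each $\phi_\ell\in L^1=(L^\infty)_*$ pairs weak-$*$ continuously with $L^\infty$, it is also weak-$*$ closed. It sits inside the weak-$*$ compact unit ball of $L^\infty(I)$, so it is weak-$*$ compact. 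By Krein--Milman, $\mathcal C$ has an extreme point $f^\ast$. The plan is to show that $f^\ast$ is a.e. $\{0,1\}$-valued. Then $E=\{f^\ast=1\}$ satisfies $\int_E\phi_\ell=\int_I f^\ast\phi_\ell=\int_I r\phi_\ell$, as required.

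The key step, and the only real obstacle, is the extremality argument. Suppose $f^\ast$ is not a.e. $0/1$. Then for some $\varepsilon>0$ the set $G=\{\varepsilon\le f^\ast\le1-\varepsilon\}$ has positive measure. Since Lebesgue measure is nonatomic, split $G$ into $q+1$ disjoint measurable pieces $G_0,\ldots,G_q$, each of positive measure. Consider the linear map
\[
T:\R^{q+1}\to\R^q,\qquad c\mapsto\Bigl(\sum_{j=0}^q c_j\int_{G_j}\phi_\ell\Bigr)_{\ell=1}^q .
\]
It has nontrivial kernel, so pick $c\neq0$ with $Tc=0$, scaled so that $\max_j|c_j|\le\varepsilon$. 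Set $g=\sum_j c_j\mathbf 1_{G_j}$. Then $g\neq0$ in $L^\infty$, because each $G_j$ has positive measure. Moreover $\int g\phi_\ell=0$ for every $\ell$, and $0\le f^\ast\pm g\le1$. Hence $f^\ast\pm g\in\mathcal C$ and $f^\ast=\tfrac12\bigl((f^\ast+g)+(f^\ast-g)\bigr)$, contradicting extremality. Therefore $f^\ast\in\{0,1\}$ a.e.

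An alternative that avoids functional analysis would be to quote Lyapunov's theorem~\cite{Lyapunov1940} directly for the measure $A\mapsto(\int_A\phi_\ell)_\ell$, which is nonatomic because each $\phi_\ell\in L^1$. Its range is then convex and compact, and it equals $\{\int f\phi:0\le f\le1\}$, which contains the moment vector of $r$. I would present the self-contained Krein--Milman argument and remark that it is exactly the Dvoretzky--Wald--Wolfowitz purification~\cite{DvoretzkyWaldWolfowitz1951}.

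The lemma does not assert that $E$ is a finite union of intervals. That finite-switch refinement, via Hobby--Rice with $\phi_\ell$ spanning a Chebyshev-type system, would be a separate statement. So I would not attempt to control the structure of $E$ here.
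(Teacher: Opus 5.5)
Your argument is correct, but it takes a different route from the paper.

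\textbf{The paper's route.} The paper quotes Lyapunov's theorem as a black box: the range $\mathcal R_\nu$ of the vector measure $\nu$ is compact and convex. It then places the target vector in that range by the layer-cake identity $\int_I r\phi_\ell=\int_0^1\int_{\{r>t\}}\phi_\ell\,d\theta\,dt$. This exhibits $v$ as an average of the points $\nu(\{r>t\})\in\mathcal R_\nu$, so $v\in\mathcal R_\nu$ by convexity and compactness.

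\textbf{Your route.} You work with the weak-$*$ compact convex set $\mathcal C$ of fractional selectors with the prescribed moments. Using Krein--Milman and a nonatomic dimension-count perturbation, you show that every extreme point of $\mathcal C$ is already $0/1$-valued. This is essentially Lindenstrauss's proof of Lyapunov's theorem, specialised to your target point. So you prove the needed case of Lyapunov rather than citing it. You also never need the barycenter step, because $r$ itself witnesses $\mathcal C\neq\emptyset$ and fixes the moments.

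\textbf{Comparison.} The paper's version is shorter once the citation is granted. Yours is self-contained. It also gives the slightly stronger structural fact that every extreme point of the moment-constrained relaxation is bang-bang.

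\textbf{One detail to state.} Weak-$*$ closedness of $\mathcal C$ also requires the order interval $\{0\le f\le 1\}$ to be weak-$*$ closed. This follows by testing $f$ and $1-f$ against nonnegative functions in $L^1(I)$. You only justified closedness for the moment constraints.
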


\begin{proof}
This is the standard purification principle for nonatomic measures.
We include the argument because it is the mechanism that converts a
general curvature-radius density into a Reuleaux-type $0/1$ density.

Let
\[
\nu(A)=
\left(
\int_A\phi_1(\theta)\,d\theta,\ldots,
\int_A\phi_q(\theta)\,d\theta
\right)\in\R^q
\]
be the vector measure generated by the functions $\phi_\ell$.  Since
Lebesgue measure on an interval is nonatomic, Lyapunov's convexity
theorem implies that the range
\[
\mathcal R_\nu=\{\nu(A):A\subset I\text{ measurable}\}
\]
is compact and convex~\cite{Lyapunov1940}.  The vector
\[
v=
\left(
\int_I r(\theta)\phi_1(\theta)\,d\theta,\ldots,
\int_I r(\theta)\phi_q(\theta)\,d\theta
\right)
\]
belongs to this convex range.  One way to see this is to write
\[
\int_I r(\theta)\phi_\ell(\theta)\,d\theta
=
\int_0^1\int_{\{r>t\}}\phi_\ell(\theta)\,d\theta\,dt
\]
for each $\ell$; hence $v$ is a barycenter of points of
$\mathcal R_\nu$, and therefore belongs to $\mathcal R_\nu$ by
convexity.  Thus there exists a measurable set $E\subset I$ with
$\nu(E)=v$, which is exactly the asserted moment identity.
\end{proof}

\begin{theorem}[Affine finite-switch Hobby--Rice selection]\label{thm:affine-hr-selection}
Let \(I\) be a compact interval, let \(0\le r\le1\) be measurable, and
let \(V\subset L^1(I)\) be finite-dimensional.  Assume that \(I\) is
partitioned into finitely many intervals and that, on each partition
cell, every element of \(V\) is represented by a function from a fixed
finite-dimensional extended Chebyshev space.  Let \(\mathcal P\) denote
this partition.  Then there exists a set
\(E\subset I\), equal up to a null set to a finite union of intervals,
such that
\[
\int_I(\mathbf 1_E-r)v\,d\theta=0
\qquad
\text{for every }v\in V.
\]
Moreover, \(E\) may be chosen so that its switching points are contained
in the partition endpoints and in the zero set of one nonzero switching
function from \(V\).  Consequently, if every nonzero element of the
local Chebyshev space has at most \(d\) zeros on each partition cell,
then the number of intervals in \(E\) is bounded by \(C|\mathcal P|+C'\),
where \(|\mathcal P|\) is the number of partition cells and \(C,C'\)
depend only on \(d\) and on the finite number of global moment functions.
\end{theorem}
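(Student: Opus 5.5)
We prove the statement by a finite-dimensional extreme-point argument that refines Lemma~\ref{lem:bang-bang}.

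\emph{Step 1: reduce to an extreme point of a moment polytope.} Let \(q=\dim V\) and fix a basis \(v_1,\dots,v_q\). Consider the convex set
\[
\mathcal C=\Bigl\{s\in L^\infty(I):0\le s\le1,\ \int_I (s-r)v_\ell\,d\theta=0,\ \ell=1,\dots,q\Bigr\}.
\]
It contains \(r\), so it is nonempty. It is weak-\(*\) compact. By Krein--Milman it therefore has an extreme point \(s^*\).

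\emph{Step 2: the extreme point is \(0/1\)-valued.} This is the standard Lyapunov-type argument. Suppose \(\{\varepsilon<s^*<1-\varepsilon\}\) has positive measure for some \(\varepsilon>0\). Since Lebesgue measure is nonatomic, we can choose \(q+1\) disjoint positive-measure subsets \(A_1,\dots,A_{q+1}\) of this set. The vectors
\[
\Bigl(\int_{A_j}v_\ell\Bigr)_{\ell=1}^q,\qquad j=1,\dots,q+1,
\]
are \(q+1\) vectors in \(\R^q\), so they are linearly dependent. A nontrivial dependence gives a bounded function \(g=\sum_j c_j\mathbf 1_{A_j}\neq0\) with \(\int gv=0\) for all \(v\in V\). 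Then \(s^*\pm\delta g\in\mathcal C\) for small \(\delta>0\), which contradicts extremality. Hence \(s^*=\mathbf 1_E\) for some measurable \(E\).

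This alone does not give finitely many intervals. To obtain that structure we use the Chebyshev hypothesis, as follows.

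\emph{Step 3: finite interval structure via a switching function.} Minimize (or maximize) a generic linear functional over \(\mathcal C\). By the bathtub/Lagrange principle for this linear program with box constraints, the optimum is attained where one takes \(s=1\) on \(\{\psi>0\}\) and \(s=0\) on \(\{\psi<0\}\). Here \(\psi=\lambda_0 w+\sum_\ell\lambda_\ell v_\ell\), with \(w\) the objective and \(\lambda_\ell\) the dual multipliers from finite-dimensional LP duality (the moment map has finite-dimensional range).

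To keep the switching function inside \(V\), I would choose the objective \(w\) itself in \(V\), or simply take \(\lambda_0=0\). Concretely, take the face of \(\mathcal C\) exposed by some nonzero \(\psi\in V\); on that face, \(s\) is forced to be \(\mathbf 1_{\{\psi>0\}}\) off the zero set \(Z(\psi)\). On each cell of \(\mathcal P\), \(\psi\) lies in an extended Chebyshev space. It is therefore either identically zero on the cell or has at most \(d\) zeros there.

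\emph{Step 4: handle cells where \(\psi\equiv0\).} This is the main obstacle. On such cells \(s\) is unconstrained by \(\psi\), so we iterate. Restrict to the union \(I_0\) of those cells and to the lower-dimensional space \(V|_{I_0}\), and repeat Steps 1--3 there. Each iteration strictly lowers the dimension of the effective moment space, so the process stops after at most \(q\) rounds.

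Alternatively, we can avoid the iteration. We argue that the extreme point from Step 2 can be modified, on each cell, to a Chebyshev-type bang-bang solution with at most \(d\) switches. The tool is the classical Krein--Nudelman principal representation: in an extended Chebyshev system of dimension \(d\), any moment vector attained by a \(0\le s\le1\) density is attained by an indicator with at most \(d\) switches. The only issue is that the cellwise moments must be matched through global constraints; we fix this by prescribing the cellwise moments of \(s^*\) on each cell separately.

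\emph{Step 5: count.} Either route places all switching points in the partition endpoints together with zeros of Chebyshev-space functions, at most \(d\) per cell, plus boundedly many contributed by the finitely many global moment functions. This gives at most \(C|\mathcal P|+C'\) intervals.

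The claim that a single nonzero switching function from \(V\) suffices needs the first route, with the degenerate cells handled as in Step 4. If that fails, I would state the cellwise version, which already gives the count.
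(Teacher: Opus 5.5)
Your Steps 1--2 are correct; they reprove Lemma~\ref{lem:bang-bang}. Step 3, however, is meant to supply both the interval structure and the single switching function from \(V\), and it fails.

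For every \(\psi\in V\), the functional \(s\mapsto\int_I\psi s\,d\theta\) is \emph{constant} on \(\mathcal C\): it equals \(\int_I\psi r\,d\theta\) by the constraints that define \(\mathcal C\). So the face of \(\mathcal C\) exposed by \(\psi\) is all of \(\mathcal C\). That set contains \(r\) itself, and nothing is forced. For the same reason, an objective \(w\in V\) makes the linear program trivial, and \(\lambda_0=0\) is not yours to choose.

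What you are really using is the face \(\{s=\mathbf 1_{\{\psi>0\}}\ \text{off}\ Z(\psi)\}\) of the box \(\{0\le s\le1\}\). This face meets \(\mathcal C\) only if the moment vector \(\Phi(r)=(\int_I rv_\ell\,d\theta)_\ell\) is a boundary point of the moment range \(K=\Phi(\{0\le s\le1\})\), with outer normal given by \(\psi\). Generically \(\Phi(r)\in\operatorname{int}K\). For example, when \(r\equiv\tfrac12\), \(\Phi(r)\) is the centre of the centrally symmetric, full-dimensional body \(K\).

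The bang-bang argument therefore needs an auxiliary objective \(w\notin V\), chosen so that the cellwise space enlarged by \(w\) is still Chebyshev. The switching function \(w-\sum_\ell\lambda_\ell v_\ell\) then lies in \(V\oplus\R w\), not in \(V\). Your Step~4 recursion is needed only in the abnormal case \(\Phi(r)\in\partial K\).

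The clause you could not reach is in fact false as stated, so no route can deliver it. Take \(I=[0,1]\) with a single cell, \(V=\operatorname{span}\{1\}\) (a one-dimensional extended Chebyshev space), and \(r\equiv\tfrac12\). Every admissible \(E\) has \(|E|=\tfrac12\), hence a switching point in \((0,1)\). That point is neither a partition endpoint nor a zero of a nonzero constant.

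The paper's proof does not establish this clause either. It only cites the Hobby--Rice, Lyapunov and Warga purification literature and attributes the count to the cellwise Chebyshev zero bound; Hobby--Rice switch points are in general not zeros of any element of \(V\).

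What survives is existence together with the bound \(C|\mathcal P|+C'\). Your cellwise Krein--Nudelman (Markov \(L\)-problem) route proves exactly that, with one adjustment. On each cell, prescribe the moments of \(r\) against a basis of the full local Chebyshev space \(U\), not merely against \(V|_{\rm cell}\), which need not be a Chebyshev system. The principal representation then gives an indicator with at most \(\dim U\) switches per cell. Summing over cells yields \(\int_I(\mathbf 1_E-r)v\,d\theta=0\) for all \(v\in V\), since \(v|_{\rm cell}\in U\). On that route, Steps~1--2 and the extreme point \(s^*\) are unnecessary.
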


\begin{proof}
This is the affine finite-switch form of the one-dimensional
Hobby--Rice purification theorem.  In the special case \(r\equiv1/2\)
it is the usual Hobby--Rice theorem for sign functions.  The affine
form allows an arbitrary density \(0\le r\le1\), equivalently an
arbitrary target point in the range of the nonatomic vector measure
generated by \(V\); it is the one-dimensional finite-switch
purification used in relaxed-control and nonatomic vector-measure
settings~\cite{Lyapunov1940,DvoretzkyWaldWolfowitz1951,HobbyRice1965,Warga1972}.
This finite-switch selection principle is the only external purification
input used in the exactness proof; below it is used only through the
mesh-moment form of Proposition~\ref{prop:finite-switch-purification}.
The final switch-count assertion follows from the Chebyshev zero-count
bound on each partition cell, together with the partition endpoints.
\end{proof}

\begin{proposition}[Finite-switch affine Hobby--Rice purification for mesh moments]\label{prop:finite-switch-purification}
Let \(I_0,\ldots,I_{N-1}\) be a partition of \([0,\pi]\) into
intervals, and let \(c_j\) be the midpoint of \(I_j\).  For every
measurable function \(0\le r\le1\) on \([0,\pi]\), there is a measurable
set \(E_N\subset[0,\pi]\), equal up to a null set to a union of at most
\(C_0N+C_1\) intervals for absolute constants \(C_0,C_1\), such that
\(\chi_N=\mathbf 1_{E_N}\) satisfies
\[
\int_{I_j}(\chi_N-r)\,d\theta=0,
\qquad j=0,\ldots,N-1,
\]
and the two closing moment identities
\[
\int_0^\pi(\chi_N-r)\cos\theta\,d\theta
=
\int_0^\pi(\chi_N-r)\sin\theta\,d\theta
=0.
\]
Moreover, the same conclusion holds with the additional local
first-moment identities
\[
\int_{I_j}(\theta-c_j)(\chi_N-r)\,d\theta=0,
\qquad j=0,\ldots,N-1.
\]
\end{proposition}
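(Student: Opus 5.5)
The plan is to apply Theorem~\ref{thm:affine-hr-selection} on \(I=[0,\pi]\), using the given partition \(\mathcal P=\{I_0,\ldots,I_{N-1}\}\) and the finite-dimensional space
\[
V=\operatorname{span}\bigl\{\cos\theta,\ \sin\theta,\ \mathbf 1_{I_j},\ (\theta-c_j)\mathbf 1_{I_j}:\ j=0,\ldots,N-1\bigr\}\subset L^1([0,\pi]).
\]
The weaker first version of the statement is just the case where the local first moments are dropped, i.e.\ \(V\) is replaced by its subspace spanned by the \(\cos\theta\), \(\sin\theta\) and \(\mathbf 1_{I_j}\). Once the hypotheses of the theorem are verified, the conclusion \(\int(\mathbf 1_E-r)v\,d\theta=0\) for all \(v\in V\) gives, by taking \(v\) to be each spanning function, the cell mass identities, the two closing identities and the local first-moment identities.

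The hypothesis to check is the local Chebyshev structure. On a single cell \(I_j\), every \(v\in V\) restricts to a function of the form \(\alpha+\beta\theta+\gamma\cos\theta+\delta\sin\theta\). So the fixed local space is \(T=\operatorname{span}\{1,\theta,\cos\theta,\sin\theta\}\), which is the kernel of the constant-coefficient operator \(D^2(D^2+1)\), whose characteristic roots are \(0,0,\pm i\).

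\begin{itemize}
\item This space is an extended Chebyshev space on any interval of length less than \(\pi\). That is the standard disconjugacy property of \(D^2(D^2+1)\) below the conjugate distance.
\item Since the cells are subintervals of \([0,\pi]\), the only possible problem is one cell of full length \(\pi\), i.e.\ \(N=1\). I would handle this by refining the partition: split the offending cell into two halves, apply the theorem to the refined partition, and note that the cell moments of the original cell are sums of moments over its pieces.
\item A more direct route is to bound zeros by Rolle's theorem. If \(f=\alpha+\beta\theta+\gamma\cos\theta+\delta\sin\theta\) is nonzero, then \(f''=-(\gamma\cos\theta+\delta\sin\theta)\) has at most one zero on an interval of length less than \(\pi\), unless \(\gamma=\delta=0\), in which case \(f\) is affine and has at most one zero. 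Hence \(f\) has at most \(3\) zeros on each cell, counted with multiplicity, possibly \(4\) on a full-length cell. This gives a uniform \(d\le4\).
\end{itemize}

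With the uniform bound \(d\), Theorem~\ref{thm:affine-hr-selection} provides \(E_N\), a finite union of intervals whose switching points are the partition endpoints together with the zeros of one nonzero switching function from \(V\). The count is then direct. The partition contributes at most \(N+1\) endpoints. The switching function contributes at most \(d\) zeros on each of the at most \(N+1\) cells after refinement. So the total number of switches is at most \((d+1)(N+1)\), and the number of intervals is at most \(C_0N+C_1\) with absolute constants, for instance \(C_0=C_1=5\).

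The main obstacle I expect is being honest that Theorem~\ref{thm:affine-hr-selection} really applies when \(V\) is only piecewise Chebyshev. In particular, the switching function must be allowed to jump across cell boundaries, since \(V\) contains the indicator functions \(\mathbf 1_{I_j}\). This is exactly the situation the theorem is stated for, so I would simply cite it, with a remark that the dimension \(\dim V=2N+2\) grows with \(N\) while the zero count per cell stays uniform. If a self-contained argument were wanted instead, I would fall back on Lemma~\ref{lem:bang-bang} to get some measurable \(E\) with the right moments. I would then run a Hobby--Rice style extreme-point argument on the face of the Lyapunov range, showing that an extreme solution is of the form \(\{f>0\}\) for some \(f\in V\), hence a finite union of intervals with the above count.
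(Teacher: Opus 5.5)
Your proposal is correct and is essentially the paper's own argument. You apply Theorem~\ref{thm:affine-hr-selection} to the span of the cell indicators, the local first-moment functions, \(\cos\theta\) and \(\sin\theta\), note that on each cell the switching function lies in \(\operatorname{span}\{1,\theta,\cos\theta,\sin\theta\}\), and count its zeros cell by cell to get \(O(N)\) intervals. The only difference is how you justify the per-cell zero bound. Your Rolle argument on \(f''=-(\gamma\cos\theta+\delta\sin\theta)\), with the extra care for a single full-length cell, is more solid than the paper's computation of the full Wronskian, because a nonvanishing top Wronskian alone does not rule out a fourth zero.
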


\begin{proof}
For the qualitative statement, take the \(N\) cell-mass functions
\(\mathbf 1_{I_j}\) and the two closing functions \(\cos\theta\) and
\(\sin\theta\).  This gives \(N+2\) imposed moments.  For the
quantitative statement, also include the \(N\) local first-moment
functions \((\theta-c_j)\mathbf 1_{I_j}\), giving \(2N+2\) imposed
moments.  Apply Theorem~\ref{thm:affine-hr-selection} to the span of
these moment functions.  It gives a selector
\(\chi_N=\mathbf 1_{E_N}\) with the same prescribed moments as \(r\),
and its switching set is contained in the partition endpoints together
with the zero set of one nonzero switching function from this span.

The switch count is linear in \(N\), because the nonsmooth basis
functions are supported on the \(N\) mesh cells.  On each mesh cell the
switching function has the form
\[
a_j+b_j(\theta-c_j)+A\cos\theta+B\sin\theta
\]
with constants \(a_j,b_j,A,B\), where \(b_j=0\) in the qualitative
version without local first moments.  The four functions
\(1,\theta,\cos\theta,\sin\theta\) form an extended Chebyshev system on
any interval of length at most \(\pi\): their Wronskian is
\[
\det
\begin{pmatrix}
1&\theta&\cos\theta&\sin\theta\\
0&1&-\sin\theta&\cos\theta\\
0&0&-\cos\theta&-\sin\theta\\
0&0&\sin\theta&-\cos\theta
\end{pmatrix}
=1.
\]
Hence any nonzero restriction has at most three zeros on a mesh cell,
counting multiplicity.  Therefore its sign decomposition has at most a
fixed number of intervals on each cell, and over \(N\) cells the selector
has at most \(C_0N+C_1\) sign intervals, for absolute constants
\(C_0,C_1\).

Changing the selector on a null set and merging adjacent intervals makes
\(E_N\) a finite union of at most \(C_0N+C_1\) intervals without changing
any moment identity.
\end{proof}

\subsection{Density of finite-arc constant-width bodies}

The first ingredient is the density of finite-arc Reuleaux-type bodies
in the space of constant-width bodies.

\begin{lemma}\label{lem:density}
For every $W\in\mathcal W_1$ and every $\varepsilon>0$, there exists
a hierarchy level $M$ and a body $R\in\mathcal A_M$ such that
\[
d_{\mathcal E}(W,R)<\varepsilon.
\]
In closure notation,
\[
\overline{\bigcup_{M\ge1}\mathcal A_M}^{\,d_{\mathcal E}}
=
\mathcal W_1.
\]
\end{lemma}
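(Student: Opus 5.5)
We plan to approximate $W$ through its curvature-radius density, using Proposition~\ref{prop:finite-switch-purification}. Translate $W$ so that its support function $h_W$ has vanishing first harmonics. Write $\rho=\rho_W$, so $0\le\rho\le1$ and $\rho(\theta)+\rho(\theta+\pi)=1$ a.e. By the antipodal relation, $\rho$ is determined by its restriction $r=\rho|_{[0,\pi]}$. The closing conditions on the full circle are
\[
\int_0^{2\pi}\rho\cos\theta\,d\theta=2\int_0^\pi r\cos\theta\,d\theta,
\]
and similarly for $\sin$, because $\int_0^\pi\cos\theta\,d\theta=0$ and $\int_0^{\pi}\sin\theta\,d\theta$ appears with opposite signs after the antipodal substitution. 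The second case needs a short check, and we will verify it explicitly. So imposing the two closing moments on $[0,\pi]$ suffices.

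For each $N$, partition $[0,\pi]$ into $N$ equal cells and apply Proposition~\ref{prop:finite-switch-purification} to obtain $\chi_N=\mathbf 1_{E_N}$. It matches the cell masses and the closing moments of $r$, and $E_N$ is a union of at most $C_0N+C_1$ intervals. Extend $\chi_N$ to $S^1$ by $\chi_N(\theta+\pi)=1-\chi_N(\theta)$. This extension is a finite $0/1$ step function satisfying the antipodal relation and the closing conditions. By Lemma~\ref{lem:finite-step-density} it defines a constant-width-one body $R_N$, unique up to translation; normalize it to have vanishing first harmonics as well. The set $\{\chi_N=1\}$ on $S^1$ has finitely many components. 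We must check that their number $n$ is odd and at least $3$, or that the degenerate cases are handled. Oddness is automatic: a $0/1$ antipodally complementary density has its switch points antipodally paired, so arcs and vertex cones alternate with an odd count of each. If $n$ came out as $1$, the density would be a half-circle indicator, which violates closing. Hence $R_N\in\mathcal R_n\subset\mathcal A_M$ with $2M+1\ge n$.

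For convergence, write $h_{R_N}-h_W=G*(\chi_N-\rho)$, where $G$ is the Green kernel of $h+h''$ on functions orthogonal to the first harmonics, say $G(t)=\tfrac12(\pi-t)\sin t$ up to first-harmonic terms. Then
\[
h_{R_N}(\theta)-h_W(\theta)=\int (\chi_N-\rho)(\phi)\,G(\theta-\phi)\,d\phi .
\]
Since $\chi_N-\rho$ has zero mass on every cell of length $\pi/N$, on each cell we may subtract the value of $G(\theta-\cdot)$ at the cell midpoint. Because $G$ is Lipschitz, each cell then contributes at most $\operatorname{Lip}(G)\cdot(\pi/N)\cdot(\pi/N)$. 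The same bound holds on the antipodal half, since there the difference is just $-(\chi_N-r)$ shifted. Summing over the $2N$ cells gives $\|h_{R_N}-h_W\|_\infty=O(1/N)$. Hausdorff distance equals the sup-norm distance of support functions, so $d_{\mathcal E}(W,R_N)\le d_H(W,R_N)\to0$. This proves the first claim. The closure identity follows because each $\mathcal A_M\subset\mathcal W_1$ and $\mathcal W_1$ is closed under Hausdorff limits: width and compactness pass to the limit, and the limit has nonempty interior since its width is $1$.

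The main obstacle is the bookkeeping in the component count of $\{\chi_N=1\}$: degenerate cells, arcs touching at cell endpoints, and the wrap-around at $0\equiv\pi$ under antipodal gluing. We will handle it by merging adjacent intervals as in the definition of $\mathcal R_n$, and by using the parity argument above to guarantee $n$ odd. The Green-kernel step needs only cell-mass matching, so the qualitative version of Proposition~\ref{prop:finite-switch-purification} suffices.
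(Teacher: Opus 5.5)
Your proposal is correct and follows the paper's proof: half-circle purification matching the cell masses and the two closing moments, antipodal extension, the parity and closing arguments giving $n\ge3$, and Green-kernel convergence. Two small differences are improvements. Your cell-wise Lipschitz estimate gives an explicit $O(1/N)$ bound in place of the paper's weak-convergence-plus-finite-net step, and your closedness remark supplies the inclusion of the closure in $\mathcal W_1$, which the paper leaves implicit.

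One slip needs fixing. The antipodal substitution actually gives $\int_0^{2\pi}\rho\sin\theta\,d\theta=2\int_0^\pi r\sin\theta\,d\theta-2$, and the kernel is $\frac{1}{2\pi}(\pi-t)\sin t$. This does no harm, because you match the half-circle moments of $r$ rather than setting them to zero, so the full-circle closing conditions for $\chi_N$ still follow.
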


\begin{proof}
Let $W\in\mathcal W_1$, and let $h=h_W$ be its support function.  We
normalize $h$ by removing its first-harmonic part; this only fixes the
translation of $W$.  Let
\[
\mu_W=h+h''
\]
be the curvature-radius measure of $W$.  Since $W$ has constant width
$1$, the identity
\[
h(\theta)+h(\theta+\pi)=1
\]
implies, in the sense of measures,
\[
\mu_W(A)+\mu_W(A+\pi)=|A|
\]
for every Borel set $A\subset S^1$.  Hence $\mu_W$ is absolutely
continuous with respect to Lebesgue measure, and
\[
d\mu_W(\theta)=\rho(\theta)\,d\theta,
\qquad
0\le \rho\le1,
\qquad
\rho(\theta)+\rho(\theta+\pi)=1
\]
for almost every $\theta$.  The standard closing condition for
curvature-radius measures gives
\[
\int_0^{2\pi}\rho(\theta)\cos\theta\,d\theta=0,
\qquad
\int_0^{2\pi}\rho(\theta)\sin\theta\,d\theta=0.
\]

Construct finite-switch $0/1$ densities approximating $\rho$ as follows.
For each $N\ge1$, partition $[0,\pi]$ into intervals
\[
\theta_j=\frac{j\pi}{N},
\qquad
I_{N,j}=[\theta_j,\theta_{j+1}],
\qquad
j=0,\ldots,N-1,
\]
of equal length $\pi/N$.  Apply
Proposition~\ref{prop:finite-switch-purification} in its qualitative
mesh-moment form to the partition \(\{I_{N,j}\}\) and to
\(r=\rho|_{[0,\pi]}\).
We obtain a finite union of intervals $E_N\subset[0,\pi]$ such that
the function
\[
\chi_N=\mathbf 1_{E_N}
\]
satisfies
\[
\int_{I_{N,j}}(\chi_N-\rho)\,d\theta=0
\qquad (j=0,\ldots,N-1),
\]
and
\[
\int_0^\pi(\chi_N-\rho)\cos\theta\,d\theta=
\int_0^\pi(\chi_N-\rho)\sin\theta\,d\theta=0.
\]
Extend $\chi_N$ to $S^1$ by
\[
\chi_N(\theta+\pi)=1-\chi_N(\theta).
\]
Then $\chi_N$ is $0/1$-valued, has finitely many switching points, and
satisfies
\[
\chi_N(\theta)+\chi_N(\theta+\pi)=1.
\]
The half-circle first-moment equalities and the same identities for
$\rho$ imply the full-circle closing conditions
\[
\int_0^{2\pi}\chi_N(\theta)\cos\theta\,d\theta=0,
\qquad
\int_0^{2\pi}\chi_N(\theta)\sin\theta\,d\theta=0.
\]

The measure \(\chi_N(\theta)\,d\theta\) is nonnegative and has total mass
\[
\int_0^{2\pi}\chi_N(\theta)\,d\theta=\pi .
\]
It is not concentrated on a pair of antipodal points, and it has
vanishing first moment by the preceding identities.
By the planar Minkowski existence theorem in support-function
form~\cite{Schneider2014}, there is a convex body $R_N$, unique up to
translation, whose support function $h_N$ satisfies
\[
h_N+h_N''=\chi_N.
\]
The antipodal identity for $\chi_N$ implies that $R_N$ has constant
width $1$.  Indeed, if
\[
w_N(\theta)=h_N(\theta)+h_N(\theta+\pi),
\]
then $w_N+w_N''=1$ and $w_N$ is $\pi$-periodic, hence $w_N\equiv1$.

Because $\chi_N$ is a finite-switch $0/1$ density, after choosing its
interval representative up to null sets, merging adjacent intervals, and
deleting null intervals, the set \(\{\chi_N=1\}\) has finitely many
connected components.  The antipodal rule forces the number $n_N$ of
these components to be odd: along any half-turn from a nonswitching point
$\theta_0$ to $\theta_0+\pi$, the value of $\chi_N$ changes from $0$ to
$1$ or from $1$ to $0$, so the number of switches on that half-turn is
odd, and the opposite half-turn is its translate by $\pi$.  Thus the
number of circular-arc components is finite and odd.  The closing
condition excludes the one-arc
degenerate case: if there were only one component, the antipodal rule
would force it to be a semicircle up to null sets, and the integral of
$(\cos\theta,\sin\theta)$ over any semicircle is nonzero.  Hence
$n_N\ge3$.  On each component where $\chi_N=1$,
the boundary has curvature radius $1$, and on the complementary
components the curvature radius is $0$.  Hence
\[
R_N\in\mathcal R_{n_N}
\subset \mathcal A_{(n_N-1)/2}.
\]
Here \((n_N-1)/2\) is an integer hierarchy level because \(n_N\) is odd
and at least \(3\).

It remains to prove convergence.  Put
\[
f_N=\chi_N-\rho.
\]
For every continuous function $\varphi$ on $[0,\pi]$,
\[
\int_0^\pi f_N(\theta)\varphi(\theta)\,d\theta
=
\sum_{j=0}^{N-1}
\int_{I_{N,j}} f_N(\theta)
\bigl(\varphi(\theta)-\varphi(c_{N,j})\bigr)\,d\theta,
\]
where $c_{N,j}$ is the midpoint of $I_{N,j}$.  Since
$|f_N|\le1$ and the mesh tends to zero, the right-hand side tends to
zero by uniform continuity of $\varphi$.  Using
$f_N(\theta+\pi)=-f_N(\theta)$ gives weak convergence
\[
f_N\,d\theta \rightharpoonup 0
\]
on the full circle.
The imposed first-moment equalities also give
\[
\int_0^{2\pi} f_N(\theta)\cos\theta\,d\theta
=
\int_0^{2\pi} f_N(\theta)\sin\theta\,d\theta
=0,
\]
so \(f_N\) lies in the subspace on which the periodic Green operator for
\(1+d^2/d\theta^2\) is defined after translations have been normalized.

Let $G$ be the periodic Green kernel of $1+d^2/d\theta^2$ on the
subspace orthogonal to the first harmonics.  The kernel $G$ is
continuous, and
\[
h_N-h=\int_0^{2\pi}G(\theta-t)f_N(t)\,dt
\]
after the same no-first-harmonic normalization.  The translates
$G(\theta-\cdot)$ form a compact subset of $C(S^1)$, and the measures
$f_N\,d\theta$ have uniformly bounded total variation.  A finite-net
argument on this compact family of test functions upgrades the weak
convergence above to convergence uniform in \(\theta\).  Therefore
\[
\|h_N-h\|_{L^\infty(S^1)}\to0.
\]
Consequently
\[
d_{\mathcal E}(W,R_N)\le \|h_N-h\|_\infty\to0.
\]
Choosing $N$ large enough gives a body $R_N$ in some hierarchy level
$\mathcal A_M$ with $d_{\mathcal E}(W,R_N)<\varepsilon$.
\end{proof}

\begin{corollary}[Density along prescribed levels]\label{cor:density-along-levels}
For every $W\in\mathcal W_1$ and every sequence
$M_j\to\infty$, one may choose $R_j\in\mathcal A_{M_j}$ such that
\[
d_{\mathcal E}(R_j,W)\to 0.
\]
\end{corollary}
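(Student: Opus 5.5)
The plan is to combine Lemma~\ref{lem:density} with the nesting \(\mathcal A_M\subseteq\mathcal A_{M+1}\) established in the proof of Proposition~\ref{prop:monotone}. Fix \(W\in\mathcal W_1\) and a sequence \(M_j\to\infty\). For each \(k\ge1\), Lemma~\ref{lem:density} with \(\varepsilon=1/k\) gives a level \(L_k\) and a body \(S_k\in\mathcal A_{L_k}\) with \(d_{\mathcal E}(W,S_k)<1/k\). Since the levels are nested, \(S_k\in\mathcal A_M\) for every \(M\ge L_k\). We may assume \(L_1\le L_2\le\cdots\); this is harmless because replacing \(L_k\) by \(\max(L_1,\ldots,L_k)\) keeps \(S_k\) admissible at the new level.

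Next define the selection along the prescribed levels. For each \(j\), let \(k(j)\) be the largest \(k\le j\) with \(L_k\le M_j\), if such a \(k\) exists. In that case set \(R_j=S_{k(j)}\). For the finitely many \(j\) with \(M_j<L_1\) (possibly none), set \(R_j=D\), which lies in every \(\mathcal A_M\). By construction \(R_j\in\mathcal A_{M_j}\) for all \(j\).

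To show convergence, fix \(k\). Since \(M_j\to\infty\), there is \(j_0\ge k\) with \(M_j\ge L_k\) for all \(j\ge j_0\). For such \(j\), \(k(j)\ge k\), so \(d_{\mathcal E}(R_j,W)<1/k(j)\le 1/k\). Hence \(d_{\mathcal E}(R_j,W)\to0\). Symmetry of \(d_{\mathcal E}\) is not needed beyond its definition: the lemma bounds \(d_{\mathcal E}(W,S_k)\), while the corollary is stated as \(d_{\mathcal E}(R_j,W)\). These agree because \(d_H(K,qL)=d_H(q^{-1}K,L)\) and \(\mathcal E(2)\) is a group.

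There is no real obstacle here. The only point needing care is confirming that membership is preserved upward, that is, \(\mathcal R_{2m+1}\subseteq\mathcal A_M\) for all \(M\ge m\), which holds directly from the definition of \(\mathcal A_M\).
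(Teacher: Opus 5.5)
Your proof is correct and follows the paper's argument. You choose $S_k\in\mathcal A_{L_k}$ with $d_{\mathcal E}(S_k,W)<1/k$ via Lemma~\ref{lem:density}, use nestedness of the levels to set $R_j=S_{k(j)}$ with $L_{k(j)}\le M_j$ and $k(j)\to\infty$, and fill the finitely many leftover indices arbitrarily (you use $D$); your explicit rule for $k(j)$ (the largest $k\le j$ with $L_k\le M_j$, after making $L_k$ nondecreasing) just spells out the step the paper leaves implicit.
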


\begin{proof}
For each $k$ choose $S_k\in\mathcal A_{L_k}$ with
$d_{\mathcal E}(S_k,W)<1/k$.  Since $\mathcal A_M$ is increasing and
$M_j\to\infty$, for each large $j$ one can choose $k(j)\to\infty$ with
$L_{k(j)}\le M_j$ and then set $R_j=S_{k(j)}$.
For the finitely many remaining indices, choose any element of
\(\mathcal A_{M_j}\).  These choices do not affect the limit.
\end{proof}

This density statement is the geometric reason why the hierarchy can
recover the full Lebesgue constant in the limit.

\subsection{A compactness observation}

We shall also use a simple compactness fact for nearly optimal
admissible covers.  Since $D\in\mathcal A_M$, every admissible cover
for $\mathcal A_M$ contains some congruent copy of $D$.  Applying a
global isometry to the cover, we may assume that this copy is the disk
\[
D=\{x\in\R^2:|x|\le 1/2\}
\]
centered at the origin.

\begin{lemma}\label{lem:bounded}
Let $A>0$.  The family of compact convex sets $U\subset\R^2$
satisfying
\[
D\subset U,
\qquad
\operatorname{area}(U)\le A
\]
is uniformly bounded.
\end{lemma}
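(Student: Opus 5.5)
The plan is to bound the distance from the origin of every point of \(U\) by a quantity depending only on \(A\), using only convexity and the inclusion \(D\subset U\). Fix \(U\) compact and convex with \(D\subset U\) and \(\operatorname{area}(U)\le A\), and let \(p\in U\) with \(|p|=R\). We may assume \(R>1/2\); otherwise there is nothing to prove. By convexity, \(U\) contains the convex hull
\[
H_p:=\operatorname{conv}\bigl(D\cup\{p\}\bigr),
\]
so \(\operatorname{area}(H_p)\le\operatorname{area}(U)\le A\). It therefore suffices to show that \(\operatorname{area}(H_p)\to\infty\) as \(R\to\infty\), with an explicit lower bound depending only on \(R\).

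For the lower bound, I would avoid computing the exact area of the ``ice-cream cone'' \(H_p\) and use a triangle instead. Let \(u=p/R\) and let \(u^\perp\) be a unit vector orthogonal to \(u\). The two points \(\pm\tfrac12u^\perp\) lie in \(D\), so the triangle with vertices \(p\), \(\tfrac12u^\perp\), \(-\tfrac12u^\perp\) is contained in \(H_p\). Its base has length \(1\) and its height is \(R\), so
\[
\operatorname{area}(H_p)\ge \tfrac12 R .
\]
Combining the two bounds gives \(R\le 2A\). Hence every such \(U\) is contained in the closed disk of radius \(\max\{2A,1/2\}\) about the origin, which is the asserted uniform boundedness.

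There is no real obstacle here; the only point to check is that the whole triangle, not just its vertices, lies in \(U\), which follows from convexity of \(U\) together with \(D\subset U\). For later use in the exactness compactness argument, I would also note that the bound depends only on \(A\), so a sequence of nearly optimal normalized covers for the levels \(\mathcal A_M\) (all of area at most \(\aLeb+1\), say) lies in a fixed ball. Blaschke's selection theorem then applies to give a Hausdorff-convergent subsequence.
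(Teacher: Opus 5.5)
Your proof is correct and is essentially the paper's argument: the paper also places the triangle with vertices \(p\) and the endpoints of the diameter of \(D\) perpendicular to \(p\) inside \(U\), whose area \(R/2\) gives \(R\le 2A\) and hence \(U\subset B(0,2A)\). The \(\max\{2A,1/2\}\) in your radius is harmless but unnecessary, since the triangle bound holds for every \(R\) and any \(U\) in the family has \(A\ge\operatorname{area}(D)=\pi/4\).
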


\begin{proof}
Let $p\in U$ and write $R=|p|$.  Since $D\subset U$ and $U$ is
convex, $U$ contains the convex hull of $p$ and $D$.  In particular,
after rotating coordinates so that $p=(R,0)$, $U$ contains the
triangle with vertices
\[
p,\qquad (0,1/2),\qquad (0,-1/2).
\]
This triangle has area $R/2$.  Hence
\[
\frac{R}{2}\le \operatorname{area}(U)\le A,
\]
so $R\le 2A$.  Since $p\in U$ was arbitrary,
$U\subset B(0,2A)$.  This proves uniform boundedness.
\end{proof}

By the Blaschke selection theorem, any sequence of such convex bodies
has a Hausdorff-convergent subsequence.

We shall also use the following closedness property.

\begin{lemma}\label{lem:closed}
Let $U_j\to U$ and $K_j\to K$ in Hausdorff distance, with all sets
nonempty and compact, and with $U_j,U$ convex.  Suppose that for each
$j$, there exists an isometry $g_j\in\mathcal E(2)$ such that
\[
g_jK_j\subset U_j.
\]
If the sets $U_j$ are uniformly bounded, then there exists an isometry
$g\in\mathcal E(2)$ such that
\[
gK\subset U.
\]
\end{lemma}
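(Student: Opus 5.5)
The plan is a compactness argument on the space of Euclidean motions. Each isometry can be written $g_j(x)=Q_jx+t_j$ with $Q_j\in O(2)$ and $t_j\in\R^2$. Since $O(2)$ is compact, we can pass to a subsequence along which $Q_j\to Q$. For the translations, fix any point $x_0\in K$. Because $K_j\to K$ in Hausdorff distance, there are points $x_j\in K_j$ with $x_j\to x_0$. Then $g_jx_j\in U_j$, and the $U_j$ are uniformly bounded, say contained in $B(0,\rho)$. Hence
\[
|t_j|\le |g_jx_j|+|Q_jx_j|\le \rho+\sup_j|x_j|,
\]
which is bounded. A further subsequence gives $t_j\to t$. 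Set $g(x)=Qx+t$; this is an isometry.

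Next we show $gK\subset U$. Take any $x\in K$ and choose $x_j\in K_j$ with $x_j\to x$, again using Hausdorff convergence. Then
\[
|g_jx_j-gx|\le |Q_j-Q|\,|x_j|+|x_j-x|+|t_j-t|\to0,
\]
so $gx$ is the limit of the points $g_jx_j\in U_j$. Since $d(g_jx_j,U)\le d_H(U_j,U)\to0$ and $U$ is closed, it follows that $gx\in U$. Hence $gK\subset U$.

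There is no real obstacle. The only point that needs care is bounding the translations, and this is exactly where the uniform boundedness of the $U_j$ and the convergence (hence boundedness) of the $K_j$ are used. Note that the argument never uses the convexity of $U_j$ or $U$.
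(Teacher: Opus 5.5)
Your proof is correct and follows the paper's argument. Both write $g_j x = Q_j x + t_j$, use compactness of $O(2)$, and bound $t_j$ via a bounded sequence $x_j\in K_j$ and the uniform boundedness of the $U_j$, then pass to the limit. The one difference is that you check pointwise, via $d(g_jx_j,U)\le d_H(U_j,U)$ and closedness of $U$, that $gK\subset U$, whereas the paper simply asserts that the inclusion $g_jK_j\subset U_j$ is preserved under Hausdorff limits.
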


\begin{proof}
Write $g_jx=Q_jx+t_j$, where $Q_j\in O(2)$ and $t_j\in\R^2$.  Since
$O(2)$ is compact, a subsequence of $Q_j$ converges to some
$Q\in O(2)$.  Choose \(x_j\in K_j\).  Since \(K_j\to K\) in Hausdorff
distance and \(K\) is compact, the points \(x_j\) lie in a uniformly
bounded set.  Also \(Q_jx_j+t_j\in g_jK_j\subset U_j\), and the sets
\(U_j\) are uniformly bounded.  Hence the translations \(t_j\) are
bounded.  Passing to a further subsequence, $t_j\to t$.  Let
$g x=Qx+t$.  Hausdorff
convergence gives
\[
g_jK_j\to gK,
\qquad
U_j\to U.
\]
The inclusion $g_jK_j\subset U_j$ is closed under Hausdorff limits,
hence
\[
gK\subset U.
\]
\end{proof}

\subsection{Exactness theorem}

The main result of this part is the exactness theorem.

\begin{theorem}\label{thm:exactness}
The variational hierarchy is exact:
\[
\lim_{M\to\infty}\Lambda_M=\aLeb.
\]
\end{theorem}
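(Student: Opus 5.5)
By Proposition~\ref{prop:monotone} the sequence \(\Lambda_M\) is nondecreasing and bounded above by \(\aLeb\), so the limit \(\Lambda_\infty:=\lim_M\Lambda_M\) exists and satisfies \(\Lambda_\infty\le\aLeb\). It remains to prove \(\aLeb\le\Lambda_\infty\). I will argue directly: construct a single compact convex set of area at most \(\Lambda_\infty\) that covers every member of \(\mathcal W_1\), and then invoke Proposition~\ref{prop:cw-equal}.

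For each \(M\), pick a compact convex \(U_M\) admissible for \(\mathcal A_M\) with \(\area(U_M)\le\Lambda_M+1/M\le\aLeb+1\). Since \(D\in\mathcal A_M\), we may apply a global isometry so that \(D\subset U_M\), with \(D\) centered at the origin. Lemma~\ref{lem:bounded} then places all \(U_M\) inside the ball \(B(0,2(\aLeb+1))\). By the Blaschke selection theorem, a subsequence \(U_{M_j}\) converges in Hausdorff distance to a compact convex set \(U\). Area is continuous under Hausdorff convergence of uniformly bounded compact convex sets in the plane, so \(\area(U)=\lim_j\area(U_{M_j})\le\lim_j(\Lambda_{M_j}+1/M_j)=\Lambda_\infty\).

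Now fix \(W\in\mathcal W_1\). By Corollary~\ref{cor:density-along-levels}, there are \(R_j\in\mathcal A_{M_j}\) with \(d_{\mathcal E}(R_j,W)\to0\). Replacing each \(R_j\) by a congruent copy \(q_jR_j\), chosen to nearly realize the infimum in \(d_{\mathcal E}\), gives \(q_jR_j\to W\) in Hausdorff distance. Since \(U_{M_j}\) covers \(R_j\), it also contains some isometric image of \(q_jR_j\). Lemma~\ref{lem:closed}, applied with \(K_j=q_jR_j\), \(K=W\), and the uniformly bounded sets \(U_{M_j}\to U\), produces \(g\in\mathcal E(2)\) with \(gW\subset U\). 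Hence \(U\) covers all of \(\mathcal W_1\), and Proposition~\ref{prop:cw-equal} yields \(\aLeb=a_{\mathrm{cw}}\le\area(U)\le\Lambda_\infty\).

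The argument is mostly bookkeeping, because density and closedness have already been established. The delicate point I expect is the passage from the shape distance \(d_{\mathcal E}\) to genuine Hausdorff convergence, which is needed to feed Lemma~\ref{lem:closed}. The convention stated at the beginning of the section handles this: covering is invariant under replacing a test body by a congruent copy. A secondary check is that area passes to the limit, which only needs upper semicontinuity, \(\area(U)\le\liminf\area(U_{M_j})\). This holds because \(U\) lies inside the \(\varepsilon\)-neighbourhood of \(U_{M_j}\) for large \(j\), and by the Steiner formula that neighbourhood has area \(\area(U_{M_j})+O(\varepsilon)\), with the constant controlled by the uniform bound on the perimeters.
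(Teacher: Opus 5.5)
Your proposal is correct and follows essentially the same route as the paper. Both arguments normalize near-optimal covers to contain the disk, extract a Hausdorff limit by Lemma~\ref{lem:bounded} and Blaschke selection, and use Corollary~\ref{cor:density-along-levels} with Lemma~\ref{lem:closed} (after replacing $R_j$ by near-optimal congruent copies) to show the limit covers $\mathcal W_1$, then pass area to the limit. The only differences are cosmetic: you tie the slack $1/M$ to the level instead of fixing $\eta$ and letting $\eta\to0$ at the end, and you spell out the Steiner-formula justification of area semicontinuity that the paper simply cites as continuity of area.
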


\begin{proof}
By Proposition~\ref{prop:monotone}, the sequence $\{\Lambda_M\}$ is
monotone nondecreasing, and by Proposition~\ref{prop:cw-equal},
\[
\Lambda_M\le \aLeb
\qquad
\text{for every }M.
\]
Hence the limit
\[
L:=\lim_{M\to\infty}\Lambda_M
\]
exists and satisfies
\[
L\le \aLeb.
\]

It remains to prove the opposite inequality.

Fix $\eta>0$.  Choose a sequence $M_j\to\infty$.  For each $j$, let
$U_j$ be an admissible convex cover for $\mathcal A_{M_j}$ such that
\[
\operatorname{area}(U_j)
\le
\Lambda_{M_j}+\eta.
\]

Since $D\in\mathcal A_{M_j}$, we may apply a global isometry and
assume
\[
D\subset U_j.
\]
Moreover,
\[
\operatorname{area}(U_j)
\le
\aLeb+\eta
\]
for all $j$.  By Lemma~\ref{lem:bounded}, the sets $U_j$ are uniformly
bounded.  By the Blaschke selection theorem, after passing to a
subsequence we may assume that
\[
U_j\to U
\]
in Hausdorff distance for some compact convex set $U$.

We claim that $U$ contains a congruent copy of every
$W\in\mathcal W_1$.  Let $W\in\mathcal W_1$.  By
Corollary~\ref{cor:density-along-levels}, choose
\(R_j\in\mathcal A_{M_j}\) with \(d_{\mathcal E}(R_j,W)\to0\).  By the
definition of \(d_{\mathcal E}\), after passing to near-minimizing
isometries \(q_j\in\mathcal E(2)\), we may arrange that
\[
d_H(q_jR_j,W)\to 0.
\]
Because $U_j$ is admissible for $\mathcal A_{M_j}$, there exists an
isometry $g_j\in\mathcal E(2)$ such that
\[
g_jR_j\subset U_j.
\]
Set
\[
\widetilde R_j=q_jR_j,
\qquad
\widetilde g_j=g_jq_j^{-1}.
\]
Then $\widetilde R_j\to W$ in ordinary Hausdorff distance and
\[
\widetilde g_j\widetilde R_j=g_jR_j\subset U_j.
\]
Applying Lemma~\ref{lem:closed} with $K_j=\widetilde R_j$ and $K=W$, we
obtain an isometry $g\in\mathcal E(2)$ such that
\[
gW\subset U.
\]

Thus $U$ covers every member of $\mathcal W_1$.  By the constant-width
reduction proved in Section~\ref{sec:constant-width}, a compact convex
set with this property is a universal cover for planar sets of diameter
at most one.  Hence
\[
\aLeb\le \operatorname{area}(U).
\]

Area is continuous under Hausdorff convergence of planar compact convex
sets, equivalently by convergence of support functions, so
\[
\operatorname{area}(U)
=
\lim_{j\to\infty}\operatorname{area}(U_j)
\le
\lim_{j\to\infty}(\Lambda_{M_j}+\eta)
=
L+\eta.
\]
Therefore
\[
\aLeb\le L+\eta.
\]
Since $\eta>0$ was arbitrary, we obtain
\[
\aLeb\le L.
\]

Together with $L\le \aLeb$, this gives
\[
L=\aLeb.
\]
Thus
\[
\lim_{M\to\infty}\Lambda_M=\aLeb.
\]
\end{proof}

Theorem~\ref{thm:exactness} shows that the Reuleaux-type variational
hierarchy recovers the full Lebesgue universal covering constant in the
limit.  The convergence estimate below makes this statement quantitative.

\section{Quantitative Convergence of the Hierarchy}
\label{sec:convergence}

Sections~\ref{sec:variational-hierarchy} and~\ref{sec:exactness}
established that the Reuleaux-type hierarchy gives a monotone sequence
of lower bounds
\[
\Lambda_M\le \aLeb,
\qquad
\Lambda_M\uparrow \aLeb.
\]

The convergence rate is controlled by
\[
0\le \aLeb-\Lambda_M\le C M^{-2}.
\]

The first step estimates the congruence-invariant Hausdorff approximation
error of the Reuleaux hierarchy.  Define
\[
\delta_M
:=
\sup_{W\in\mathcal W_1}
\inf_{R\in\mathcal A_M}
d_{\mathcal E}(W,R),
\]
where $\mathcal W_1$ is the class of planar convex bodies of constant
width $1$, and
\[
\mathcal A_M
=
\{D\}\cup\bigcup_{m=1}^{M}\mathcal R_{2m+1}
\]
is the \(M\)-th Reuleaux-type hierarchy level.  We prove that
\[
\delta_M=O(M^{-2}).
\]
In fact, the same method also gives the sharp Hausdorff approximation
rate
\[
\delta_M=\Theta(M^{-2}).
\]
This identifies \(M^{-2}\) as the intrinsic Hausdorff approximation
scale of the finite-arc hierarchy.

Second, we transfer Hausdorff approximation of test bodies into area
convergence of the variational lower-bound values.  We prove that
there are constants $C_{\rm tr},C_{\rm St}>0$ such that
\[
\aLeb-\Lambda_M
\le
C_{\rm tr}\delta_M+C_{\rm St}\delta_M^2.
\]
Combining this transfer estimate with $\delta_M=O(M^{-2})$ gives
\[
0\le \aLeb-\Lambda_M\le C M^{-2}.
\]

The proof is organized as follows.  Section~\ref{sec:curvature}
defines the Reuleaux approximation error and recalls the
curvature-radius representation of constant-width bodies.
Section~\ref{sec:bang-bang} proves the bang-bang approximation
theorem, which gives the upper bound $\delta_M=O(M^{-2})$.
Section~\ref{sec:lower-scale} proves the matching lower bound
$\delta_M\ge cM^{-2}$.  Section~\ref{sec:transfer} proves the area
transfer lemma.  Section~\ref{sec:main-quant} combines these results
into the main quantitative convergence theorem.

The estimate
\[
\delta_M=\Theta(M^{-2})
\]
concerns Hausdorff approximation of constant-width bodies by finite-arc
Reuleaux-type bodies.  For the hierarchy values, the transfer argument
above gives the one-sided estimate
\[
0\le \aLeb-\Lambda_M\le C M^{-2}.
\]

\subsection{Reuleaux approximation error and curvature-radius
representation}
\label{sec:curvature}

We begin with the congruence-invariant Hausdorff approximation error of
the Reuleaux-type hierarchy:
\[
\delta_M
:=
\sup_{W\in\mathcal W_1}
\inf_{R\in\mathcal A_M}
d_{\mathcal E}(W,R).
\]
The quantity $\delta_M$ measures how well all constant-width bodies of
width $1$ can be approximated, up to Euclidean congruence, by the first $M$ odd Reuleaux levels, equivalently by finite-arc
Reuleaux-type bodies of arc complexity at most $2M+1$, together with
the disk $D$.

For compact convex sets $K,L\subset\R^2$ in a fixed common placement,
the Hausdorff distance can be expressed through support functions:
\[
d_H(K,L)
=
\|h_K-h_L\|_{L^\infty(S^1)}
\]
after the same origin has been fixed.  Translations add first harmonics
to the support function.  In the construction below we choose the
translating first harmonic so that the support-function difference has no
first-harmonic component.  This yields an ordinary Hausdorff estimate for
one placement of the approximant, and therefore an estimate for
$d_{\mathcal E}$.

Let $K\in\mathcal W_1$, and write its support function as
\[
h_K(\theta)
=
\max_{x\in K}\langle x,(\cos\theta,\sin\theta)\rangle.
\]
Since $K$ has constant width $1$,
\[
h_K(\theta)+h_K(\theta+\pi)=1.
\]

In the sense of distributions, define the curvature-radius measure
\[
\mu_K
:=
h_K+h_K''.
\]
For every planar convex body, $\mu_K$ is a nonnegative finite measure
on $S^1$.  The constant-width condition implies, again in the sense of
measures,
\[
\mu_K(A)+\mu_K(A+\pi)=|A|
\]
for every Borel set $A\subset S^1$, where $|A|$ denotes
one-dimensional Lebesgue measure on the circle.  Indeed, applying the
operator $1+\frac{d^2}{d\theta^2}$ to
\[
h_K(\theta)+h_K(\theta+\pi)=1
\]
gives exactly this relation.

It follows that $\mu_K$ is absolutely continuous with respect to
Lebesgue measure.  Hence
\[
d\mu_K(\theta)=\rho_K(\theta)\,d\theta
\]
for some measurable function $\rho_K$, and the measure identity above
gives
\[
0\le \rho_K(\theta)\le 1,
\qquad
\rho_K(\theta)+\rho_K(\theta+\pi)=1
\]
for almost every $\theta$.

Conversely, a measurable function $\rho$ satisfying
\[
0\le \rho\le 1,
\qquad
\rho(\theta)+\rho(\theta+\pi)=1,
\]
together with the closing conditions
\[
\int_0^{2\pi}\rho(\theta)\cos\theta\,d\theta=0,
\qquad
\int_0^{2\pi}\rho(\theta)\sin\theta\,d\theta=0,
\]
determines a constant-width convex body through the equation
\[
h+h''=\rho.
\]
Indeed, the nonnegative measure \(\rho(\theta)\,d\theta\) has total
mass \(\pi\) by the antipodal identity.  Since it is absolutely
continuous, it is not concentrated on a pair of antipodal points.  By
the planar Minkowski existence theorem in support-function form, this
measure, together with the closing conditions, determines a convex body
whose support function solves this equation.  The solution $h$ is unique
up to addition of first harmonics, which corresponds exactly to
translating the body.

For a Reuleaux-type finite-arc body, the curvature-radius density is
$0/1$-valued.  It equals $1$ on normal-angle intervals corresponding
to circular boundary arcs of radius $1$, and equals $0$ on the
complementary intervals corresponding to vertices.  Thus approximating
a general $W\in\mathcal W_1$ by Reuleaux-type bodies is equivalent to
approximating its density $\rho_W$ by $0/1$-valued functions
satisfying the antipodal constant-width constraint and the closing
conditions.

The next subsection proves that this can be done with support-function
error $O(M^{-2})$.

\subsection{Bang-bang approximation and the upper bound for \texorpdfstring{$\delta_M$}{delta M}}
\label{sec:bang-bang}

We prove the upper estimate
\[
\delta_M\le C M^{-2}.
\]
The proof uses the bang-bang moment selection lemma and its
finite-switch refinement,
Lemma~\ref{lem:bang-bang} and
Proposition~\ref{prop:finite-switch-purification}, already proved in
the exactness section.  In
the present subsection we impose one additional local first-moment
condition on each mesh interval.  That extra moment is what upgrades
the qualitative density argument to a second-order support-function
estimate.

\begin{lemma}[Green-kernel second-order estimate]\label{lem:green-estimate}
Let $[0,2\pi]$ be partitioned into intervals $I_j$ of length at most
$\Delta$, with midpoints $c_j$.  Suppose $f\in L^\infty(S^1)$ satisfies
$\|f\|_\infty\le 1$, has vanishing first harmonics,
\[
\int_0^{2\pi}f(t)\cos t\,dt=
\int_0^{2\pi}f(t)\sin t\,dt=0,
\]
and satisfies, on every interval $I_j$,
\[
\int_{I_j} f(t)\,dt=0,
\qquad
\int_{I_j} (t-c_j) f(t)\,dt=0.
\]
Let $u$ be the $2\pi$-periodic solution of
\[
u+u''=f
\]
with no first-harmonic component.  Then
\[
\|u\|_{L^\infty(S^1)}\le C_G\Delta^2,
\]
where $C_G$ is an absolute constant.
\end{lemma}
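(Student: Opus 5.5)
We write \(u\) through the periodic Green kernel of \(1+d^2/d\theta^2\) on the subspace orthogonal to the first harmonics, and then use the local moment conditions in a second-order Taylor-type cancellation. Concretely, take
\[
G(s)=\frac{1}{\pi}\sum_{|k|\ne1}\frac{e^{iks}}{1-k^2}\cdot\frac12,
\]
normalized so that \(u(\theta)=\int_0^{2\pi}G(\theta-t)f(t)\,dt\) is the unique solution with no first-harmonic component. This is legitimate because \(f\) itself has vanishing first harmonics, so no resonance occurs. The explicit closed form of \(G\) is \(G(s)=\frac{1}{2\pi}\bigl(1+\tfrac12\cos s\bigr)-\frac{1}{2\pi}(s-\pi)\sin s\) for \(s\in[0,2\pi)\), up to the exact constants, which I would verify by checking \(G+G''=\frac1{2\pi}(\delta_0-\frac{1}{\pi}\cos s\text{-type corrections})\). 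The only properties needed are these: \(G\) is continuous and \(2\pi\)-periodic, it is smooth (indeed real-analytic) on \(S^1\setminus\{0\}\), and \(G'\) has a single jump at \(s=0\) while \(G''\) stays bounded away from it. Equivalently, \(G\in W^{2,\infty}\) piecewise, with \(G'\) of bounded variation and \(\|G''\|_{L^\infty(S^1\setminus\{0\})}\) absolutely bounded.

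Fix \(\theta\) and split \(u(\theta)=\sum_j\int_{I_j}G(\theta-t)f(t)\,dt\). On each cell \(I_j\) not containing \(\theta\), the function \(t\mapsto G(\theta-t)\) is \(C^2\) with \(|\partial_t^2G|\le C\). Subtracting its first-order Taylor polynomial at \(c_j\), we use \(\int_{I_j}f=0\) and \(\int_{I_j}(t-c_j)f=0\) to get
\[
\Bigl|\int_{I_j}G(\theta-t)f(t)\,dt\Bigr|\le \int_{I_j}\tfrac12 C (t-c_j)^2\,dt\le C\Delta^2|I_j|.
\]
Summing over all cells gives \(C\Delta^2\cdot2\pi\).

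On the at most two cells containing \(\theta\) (or adjacent to it, if \(\theta\) is an endpoint), \(G(\theta-\cdot)\) is only Lipschitz, with a kink at \(t=\theta\). Here I subtract only the constant \(G(\theta-c_j)\), using \(\int_{I_j}f=0\). This gives the bound \(\|G'\|_\infty\int_{I_j}|t-c_j|\,dt\le C\Delta\cdot\Delta=C\Delta^2\). This is already of the right order, so the kink costs nothing.

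Adding the two contributions yields \(|u(\theta)|\le C_G\Delta^2\) uniformly in \(\theta\), with \(C_G\) depending only on \(\|G'\|_\infty\) and \(\sup_{s\ne0}|G''(s)|\).

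The main obstacle is justifying the Green representation and the regularity of \(G\) cleanly. One needs to confirm that the kernel on the complement of the first harmonics is piecewise \(C^2\) with a single derivative jump, despite the projection away from \(\cos,\sin\). I would do this via the Fourier series \(\sum_{|k|\ne1}e^{iks}/(1-k^2)\). Its coefficients are \(O(k^{-2})\), so \(G\) is Lipschitz. Moreover, \(G''+G=\frac1{2\pi}\delta_0-\frac{1}{\pi}\cos s\) as distributions, which shows that \(G''\) is bounded off \(0\). This identity is all the argument uses.
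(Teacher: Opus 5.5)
Your proposal is correct and is essentially the paper's proof: the same Green-kernel representation, first-order Taylor cancellation via the two local moments on cells avoiding the kink, and subtraction of only the constant $G(\theta-c_j)$ on the at most two exceptional cells. Your $|I_j|$-weighted summation $\sum_j C\Delta^2|I_j|\le 2\pi C\Delta^2$ is in fact cleaner than the paper's count of ``$O(\Delta^{-1})$ nonexceptional intervals'', which the hypotheses do not guarantee.

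There are a few harmless slips, none of which touches the argument:
\begin{itemize}
\item The kernel is $G(s)=\frac{1}{2\pi}\bigl((\pi-s)\sin s-\tfrac12\cos s\bigr)$ on $[0,2\pi]$.
\item The distributional identity is $G+G''=\delta_0-\frac{1}{\pi}\cos s$, not $\frac{1}{2\pi}\delta_0-\frac{1}{\pi}\cos s$.
\item $O(k^{-2})$ Fourier coefficients alone do not imply Lipschitz continuity; the Clausen function $\sum_k k^{-2}\sin ks$ is a counterexample. The Lipschitz bound should instead be drawn from the distributional identity, which makes $G'$ of bounded variation.
\end{itemize}
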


\begin{proof}
Let $G$ be the periodic Green kernel of $1+d^2/d\theta^2$ on the
subspace orthogonal to $\cos\theta$ and $\sin\theta$.  Then
\[
u(\theta)=\int_0^{2\pi}G(\theta-t)f(t)\,dt.
\]
The kernel $G$ is bounded and piecewise $C^2$, with only the standard
first-derivative jump at $t=\theta$ modulo \(2\pi\).  Fix \(\theta\).
After choosing a representative on \([0,2\pi]\), at most two partition
intervals meet this singular point modulo \(2\pi\), the second only when
the singular point lies at the periodic endpoint.  Call these intervals
exceptional.

On each nonexceptional interval \(I_j\), expand \(G(\theta-t)\) around
\(t=c_j\) to first order.  Such an interval lies in one smooth branch of
the kernel, where the second derivative is uniformly bounded.  The
zeroth and first order terms vanish by the two moment conditions on
\(I_j\).  The remaining contribution is bounded by
\[
C\Delta^2\int_{I_j}|f(t)|\,dt\le C\Delta^3.
\]
There are \(O(\Delta^{-1})\) nonexceptional intervals, so their total
contribution is \(O(\Delta^2)\).

For an exceptional interval \(I_j\), split \(I_j\) at the singular point
if it lies in the interior.  The kernel is Lipschitz on each resulting
piece, with a uniform Lipschitz constant; only its first derivative has
the standard jump.  Since \(\int_{I_j}f=0\), we may subtract the constant
value \(G(\theta-c_j)\) and write
\[
\int_{I_j}G(\theta-t)f(t)\,dt
=
\int_{I_j}\bigl(G(\theta-t)-G(\theta-c_j)\bigr)f(t)\,dt .
\]
Continuity at the singular point, together with the one-sided Lipschitz
bounds, gives the uniform estimate on the whole interval
\[
|G(\theta-t)-G(\theta-c_j)|\le C\Delta,
\]
and \(\|f\|_\infty\le1\), so this interval contributes at most
\(C\Delta |I_j|\le C\Delta^2\).  Since there are at most two exceptional
intervals, their total contribution is also \(O(\Delta^2)\).  The same
argument covers the cut point of the circle: only \(O(1)\) mesh cells are
affected by the Green-kernel singularity or by the choice of periodic
representative, and their total contribution remains \(O(\Delta^2)\).
Taking the supremum over \(\theta\) gives the stated estimate.
\end{proof}

Applying the lemma to constant-width bodies gives the upper approximation
rate.

\begin{proposition}[Upper Hausdorff approximation rate]\label{prop:upper-rate}
There exists a constant $C_{\rm app}>0$ such that
\[
\delta_M\le C_{\rm app}M^{-2}
\]
for all $M\ge1$.
\end{proposition}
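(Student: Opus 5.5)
Fix \(W\in\mathcal W_1\) with curvature-radius density \(\rho\), so \(0\le\rho\le1\), \(\rho(\theta)+\rho(\theta+\pi)=1\) and the closing conditions hold. We build a finite-arc approximant whose support function is within \(O(M^{-2})\) of \(h_W\), uniformly in \(W\), and whose arc count is at most \(2M+1\). The plan mirrors Lemma~\ref{lem:density}, but we apply Proposition~\ref{prop:finite-switch-purification} in its \emph{quantitative} form, which includes the local first moments.

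First we choose the mesh size so the arc budget fits. Partition \([0,\pi]\) into \(N\) equal cells \(I_j\) of length \(\pi/N\). Proposition~\ref{prop:finite-switch-purification} then gives \(E_N\subset[0,\pi]\), a union of at most \(C_0N+C_1\) intervals, such that \(\chi=\mathbf 1_{E_N}\) matches \(\rho\) in cell mass, local first moment and the two half-circle closing moments. Extend \(\chi\) antipodally by \(\chi(\theta+\pi)=1-\chi(\theta)\).

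The number of components of \(\{\chi=1\}\) on the full circle is at most \(C_2N+C_3\): the \(1\)-intervals on \([\pi,2\pi]\) are gaps of \(E_N\), so their number is again linear in \(N\). As in Lemma~\ref{lem:density}, this number \(n\) is odd and at least \(3\), so the body lies in \(\mathcal R_n\subset\mathcal A_{(n-1)/2}\). Choose \(N=\lfloor (M-C_4)/C_2'\rfloor\), with constants arranged so that \(n\le 2M+1\). For the finitely many small \(M\) where this gives \(N<1\), the bound is trivial, since \(\delta_M\le\delta_1\le\) a constant because all of \(\mathcal W_1\) lies within bounded distance of \(D\). Thus \(\Delta=\pi/N\le C/M\) for large \(M\).

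Next we check the hypotheses of Lemma~\ref{lem:green-estimate} for \(f=\chi-\rho\) on the full-circle partition \(\{I_j\}\cup\{I_j+\pi\}\). Clearly \(\|f\|_\infty\le1\). On the translated cells, \(f(\theta+\pi)=-f(\theta)\), and the midpoints shift by \(\pi\), so both cell moments vanish there as well. The full-circle first harmonics vanish because \(\cos\) and \(\sin\) change sign under the shift by \(\pi\), which doubles the half-circle identities.

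Let \(h_R\) solve \(h_R+h_R''=\chi\), normalized so that \(h_R-h_W\) has no first harmonic; this choice is a translation of \(R\). Then \(u=h_R-h_W\) is the no-first-harmonic solution of \(u+u''=f\). Lemma~\ref{lem:green-estimate} gives \(\|u\|_\infty\le C_G\Delta^2\le C M^{-2}\), hence \(d_{\mathcal E}(W,R)\le d_H(W,R)=\|u\|_\infty\le C_{\rm app}M^{-2}\). All constants are absolute, so taking the supremum over \(W\) proves \(\delta_M\le C_{\rm app}M^{-2}\).

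The main obstacle is the switch-count bookkeeping. We must confirm that the constants \(C_0,C_1\) in Proposition~\ref{prop:finite-switch-purification} are absolute, using the zero bound for the four-function Chebyshev system on each cell. We must also check that merging components across \(\theta=0,\pi\) only decreases the count, and that the parity and \(n\ge3\) arguments survive with the extra local moments. These points are routine but they fix the linear relation between \(N\) and \(M\).
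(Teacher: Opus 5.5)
Your proposal is correct and is essentially the paper's proof: the quantitative mesh-moment purification on \([0,\pi]\), the antipodal extension, oddness and \(n\ge3\) from the antipodal rule and the closing conditions, Lemma~\ref{lem:green-estimate} applied on the doubled partition \(\{I_j\}\cup\{I_j+\pi\}\), and a choice \(N\asymp M\) so that the arc count fits into \(\mathcal A_M\). The differences are cosmetic: you handle small \(M\) via \(\delta_M\le\delta_1<\infty\) where the paper simply enlarges the constant, and the switch-count bookkeeping you flag as the main obstacle is already packaged in the absolute constants of Proposition~\ref{prop:finite-switch-purification}.
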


\begin{proof}
Let $W\in\mathcal W_1$, and let $\rho=\rho_W$ be its curvature-radius
density.  We work first on the half-circle $[0,\pi]$.  Partition
$[0,\pi]$ into $N$ intervals by
\[
\theta_j=\frac{j\pi}{N},
\qquad
j=0,\ldots,N,
\]
and
\[
I_j=[\theta_j,\theta_{j+1}],
\qquad
j=0,\ldots,N-1,
\]
of equal length
\[
\Delta=\frac{\pi}{N}.
\]
Let $c_j$ be the midpoint of $I_j$.

Apply Proposition~\ref{prop:finite-switch-purification} in its
quantitative mesh-moment form to the partition \(\{I_j\}\) and to
\(r=\rho|_{[0,\pi]}\).  This mesh system has \(2N+2\) imposed moments,
so we obtain a measurable set $E_N\subset[0,\pi]$, chosen as a union of
\(O(N)\) intervals, whose characteristic function
\[
\chi_N=\mathbf 1_{E_N}
\]
satisfies the local moment identities
\[
\int_{I_j}(\chi_N-\rho)\,d\theta=0,
\]
\[
\int_{I_j}(\theta-c_j)(\chi_N-\rho)\,d\theta=0
\]
for \(j=0,\ldots,N-1\), and also the half-circle closing identities
\[
\int_0^\pi(\chi_N-\rho)\cos\theta\,d\theta=0,
\]
\[
\int_0^\pi(\chi_N-\rho)\sin\theta\,d\theta=0.
\]

Extend $\chi_N$ to the full circle by
\[
\chi_N(\theta+\pi)=1-\chi_N(\theta).
\]
Then
\[
\chi_N(\theta)+\chi_N(\theta+\pi)=1.
\]
The imposed half-circle first-harmonic identities imply the full-circle
closing conditions.  Indeed, the original density $\rho$ satisfies the
same antipodal identity and has vanishing full-circle first moments.
Hence the half-circle moments of $\chi_N$ agree with those of $\rho$,
and therefore
\[
\int_0^{2\pi}\chi_N(\theta)\cos\theta\,d\theta=0,
\]
\[
\int_0^{2\pi}\chi_N(\theta)\sin\theta\,d\theta=0.
\]

The measure \(\chi_N(\theta)\,d\theta\) is nonnegative, has total mass
\(\pi\), and is absolutely continuous, hence it is not concentrated on a
pair of antipodal points.  Together with the two closing conditions, the
planar Minkowski existence theorem in support-function
form~\cite{Schneider2014} gives a convex body, unique up to translation,
whose curvature-radius measure is \(\chi_N(\theta)\,d\theta\).  Thus the
equation
\[
h_N+h_N''=\chi_N
\]
has a $2\pi$-periodic support-function solution $h_N$, unique up to
addition of first harmonics, and this solution is the support function
of a convex body $R_N$.

The antipodal identity gives the width.  If
\[
w_N(\theta)=h_N(\theta)+h_N(\theta+\pi),
\]
then
\[
w_N+w_N''
=
\chi_N(\theta)+\chi_N(\theta+\pi)
=1.
\]
Since $w_N$ is $\pi$-periodic, the first-harmonic part in the general
solution of $w+w''=1$ vanishes, and hence $w_N\equiv1$.  Thus $R_N$ has
constant width $1$.

It remains to check that $R_N$ belongs to a finite Reuleaux level.
The selector $E_N$ may be chosen, up to null sets, as a union of $O(N)$
intervals; we use such a representative of $\chi_N$.  After merging
adjacent intervals and deleting zero-length pieces, let $n_N$ be the
number of connected components of the set $\{\chi_N=1\}$ on $S^1$.
The antipodal rule
\[
\chi_N(\theta)+\chi_N(\theta+\pi)=1,
\]
forces $n_N$ to be odd.  To see this, choose a point $\theta_0$ that is
not a switching point.  The values at $\theta_0$ and $\theta_0+\pi$ are
opposite, so the number of switches on the half-turn
$(\theta_0,\theta_0+\pi)$ is odd.  The switches on the opposite
half-turn are its translates by $\pi$, so the total number of switches
is twice this odd number, and the number of $1$-components is the same
odd number.
The one-component case is excluded by the closing conditions: if
\(\{\chi_N=1\}\) had a single connected component, the antipodal rule
would force it to be a semicircle up to null sets, and the first moment
of a semicircle is nonzero.  Hence \(n_N\ge3\).

On each component where $\chi_N=1$, the curvature radius is $1$, so the
corresponding boundary piece is a circular arc of radius $1$; on the
components where $\chi_N=0$, the curvature radius vanishes and the
boundary contribution is a vertex.  Therefore the boundary of $R_N$
consists of $n_N$ circular arcs of radius $1$, with $n_N$ odd and
$n_N=O(N)$.  Thus there exists a universal constant $A>0$ such that
\[
n_N\le AN
\]
for all $N$, and
\[
R_N\in\mathcal R_{n_N}.
\]
The complexity chain used here is
\[
N\ \text{mesh intervals}
\Longrightarrow
2N+2\ \text{moment constraints}
\Longrightarrow
O(N)\ \text{switches}
\Longrightarrow
n_N\le AN.
\]

It remains to estimate the Hausdorff error.  Let
\[
f_N=\chi_N-\rho,
\qquad
g_N=h_N-h_W.
\]
Then
\[
g_N+g_N''=f_N.
\]
We normalize $g_N$ to have no first-harmonic part; this only fixes the
translation of $R_N$.  Since both $\chi_N$ and $\rho$ satisfy the
constant-width antipodal identity, $f_N(\theta+\pi)=-f_N(\theta)$; hence
the same zeroth and first local moment identities hold on the translated
intervals $I_j+\pi$.  The local moment identities on
$I_j$ and $I_j+\pi$, together with the global first-harmonic identities,
put $f_N$ in the hypotheses of
Lemma~\ref{lem:green-estimate}.  Hence
\[
\|g_N\|_\infty\le C\Delta^2.
\]
Since Hausdorff distance between convex bodies in this normalized
placement equals the
$L^\infty$-distance between support functions after a common
translation normalization,
\[
d_{\mathcal E}(W,R_N)
\le
\|h_W-h_N\|_\infty
\le
C\Delta^2
=
\frac{C}{N^2}.
\]

Since \(R_N\in\mathcal R_{n_N}\) with \(n_N\le AN\), and since
\(\mathcal A_M\) contains all Reuleaux-type bodies with odd arc
complexity at most \(2M+1\), the body \(R_N\) belongs to
\(\mathcal A_M\) whenever \(AN\le 2M+1\).  For all sufficiently large
\(M\), choose
\[
N=\left\lfloor\frac{2M+1}{A}\right\rfloor\ge1.
\]
Then \(R_N\in\mathcal A_M\) and \(N\asymp M\), so
\[
\inf_{R\in\mathcal A_M}d_{\mathcal E}(W,R)
\le
\frac{C'}{M^2}.
\]
Increasing \(C'\), if necessary, handles the remaining finitely many
small values of \(M\).
Taking the supremum over $W\in\mathcal W_1$ gives
\[
\delta_M\le C_{\rm app}M^{-2}.
\]
\end{proof}

\subsection{The \texorpdfstring{$M^{-2}$}{M minus two} finite-arc approximation scale}
\label{sec:lower-scale}

The upper bound $\delta_M=O(M^{-2})$ is sufficient for the convergence
rate of the variational hierarchy.  Nevertheless, it is useful to
record that this rate is the natural Hausdorff approximation scale for
finite-arc Reuleaux-type bodies.

\begin{proposition}[Lower Hausdorff approximation rate]\label{prop:lower-rate}
There exists a constant $c_{\rm app}>0$ such that, for all
sufficiently large $M$,
\[
\delta_M\ge c_{\rm app}M^{-2}.
\]
Consequently,
\[
\delta_M=\Theta(M^{-2}).
\]
\end{proposition}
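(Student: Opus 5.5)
The plan is to exhibit one fixed body \(W_\ast\in\mathcal W_1\) whose curvature-radius density stays uniformly away from \(0\) and \(1\). Then every finite-arc approximant is forced to make an error of size at least a constant times the square of its longest switching interval. Concretely, take
\[
\rho_\ast(\theta)=\tfrac12+\tfrac14\cos 3\theta .
\]
It satisfies \(\tfrac14\le\rho_\ast\le\tfrac34\) and \(\rho_\ast(\theta)+\rho_\ast(\theta+\pi)=1\). Its first harmonics vanish, so the converse construction recalled in Section~\ref{sec:curvature} gives a body \(W_\ast\in\mathcal W_1\). Since \(\rho_\ast\) is not constant, \(W_\ast\) is not a disk. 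Its density is also invariant only up to the finite rotation group, so \(d_{\mathcal E}(W_\ast,D)=:c_D>0\) by compactness of \(O(2)\) and continuity. The disk member of \(\mathcal A_M\) therefore contributes a fixed positive distance, which exceeds \(cM^{-2}\) for large \(M\).

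Next I would treat \(R\in\mathcal R_n\) with \(n\le 2M+1\), and an isometry \(q\) with \(d_H(W_\ast,qR)\le\delta\). Replacing \(R\) by \(qR\) changes \(\rho_R\) only by a rotation or reflection of the angle variable, so it is still a \(0/1\) step function with \(2n\) switches. Set
\[
g=h_{qR}-h_{W_\ast},
\]
so that \(\|g\|_\infty=d_H(W_\ast,qR)\le\delta\). In the sense of distributions,
\[
g+g''=f:=\rho_{qR}-\rho_\ast .
\]
Translations add first harmonics to \(g\), which \(1+d^2/d\theta^2\) annihilates, so no normalization issue arises. The \(2n\) switching points cut \(S^1\) into at most \(2n\) intervals, so some interval \(J\) has length \(\ell\ge \pi/n\). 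On \(J\) the density \(\rho_{qR}\) is constant (\(0\) or \(1\)), hence \(f\) has constant sign there and satisfies \(|f|\ge\tfrac14\).

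The key step is a local test-function argument. Take a nonnegative bump \(\varphi\in C^2\) supported in \(J\), namely \(\varphi(\theta)=\psi((\theta-a)/\ell)\) for a fixed profile \(\psi\ge0\) on \([0,1]\) with \(\int\psi=1\). Then
\[
\Bigl|\int f\varphi\Bigr|\ge\tfrac14\int\varphi=\tfrac{\ell}{4}.
\]
On the other hand, integrating by parts twice,
\[
\Bigl|\int f\varphi\Bigr|=\Bigl|\int g(\varphi+\varphi'')\Bigr|\le \delta\bigl(\ell\|\psi\|_{L^1}+\ell^{-1}\|\psi''\|_{L^1}\bigr).
\]
For \(\ell\le 2\pi\) the right side is at most \(C\delta/\ell\), so \(\delta\ge c\ell^2\ge c\pi^2 n^{-2}\ge c'M^{-2}\). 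Taking the infimum over isometries, over \(n\le2M+1\) and over \(R\), together with the disk case, gives
\[
\inf_{R\in\mathcal A_M}d_{\mathcal E}(W_\ast,R)\ge c_{\rm app}M^{-2}
\]
for large \(M\). Hence \(\delta_M\ge c_{\rm app}M^{-2}\), and combined with Proposition~\ref{prop:upper-rate} this yields \(\delta_M=\Theta(M^{-2})\).

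The main point needing care is that this test-function step is legitimate for merely distributional \(g''\): \(g\) is Lipschitz, so it is really just the definition of the distributional identity \(g+g''=f\). I also need to handle the counting of intervals in the circle topology after merging adjacent pieces, which only decreases the number of intervals and so only helps. I expect no other obstacle, since the argument is a direct dual of the Green-kernel estimate (Lemma~\ref{lem:green-estimate}).
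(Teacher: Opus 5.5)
Your proposal is correct and follows the paper's proof essentially step by step. You use the same smooth body with density $\tfrac12+\varepsilon\cos3\theta$ bounded away from $0$ and $1$, extract a constant-density interval of length $\gtrsim M^{-1}$, integrate a rescaled bump against $u+u''=\rho_P-\rho_*$ to get $\|u\|_\infty\gtrsim L^2$, and treat the disk separately as a fixed positive distance. Your choice $\varepsilon=\tfrac14$ (the paper takes $\varepsilon<\tfrac14$) and your handling of isometries by working directly with $qR$ are only cosmetic differences.
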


\begin{proof}
We construct a smooth constant-width body that cannot be approximated
faster than order $M^{-2}$ by Reuleaux-type bodies from the first $M$ hierarchy levels.

Choose $0<\varepsilon<1/4$, and define
\[
\rho_*(\theta)
=
\frac12+\varepsilon\cos 3\theta.
\]
Then
\[
\rho_*(\theta+\pi)
=
\frac12-\varepsilon\cos 3\theta,
\]
so
\[
\rho_*(\theta)+\rho_*(\theta+\pi)=1.
\]
Moreover,
\[
\frac14\le \rho_*(\theta)\le \frac34.
\]

The function $\rho_*$ has no first harmonics.  Hence there exists a
$2\pi$-periodic support function $h_*$, unique up to translation,
satisfying
\[
h_*+h_*''=\rho_*.
\]
Since $\rho_*\ge 1/4>0$, this support function has strictly positive
radius-of-curvature density, and therefore determines a strictly convex
body $K_*$.  The antipodal identity above implies that $K_*$ has
constant width $1$.  Its curvature-radius density is bounded strictly
away from $0$ and $1$:
there exists $\eta>0$ such that
\[
\eta\le \rho_*(\theta)\le 1-\eta
\]
for every $\theta$.

Let $P\in\mathcal R_{\le 2M+1}$.  Its curvature-radius density $\rho_P$
is $0/1$-valued, and the set \(\{\rho_P=1\}\) has at most \(2M+1\)
connected components.  Therefore the switching points partition \(S^1\)
into \(O(M)\) intervals on each of which \(\rho_P\) is constant.  Hence
at least one constant interval has length at least \(c/M\), with \(c>0\)
universal.  Choose a closed subinterval
\(I=[a,a+L]\subset S^1\) inside that constant interval, with
\[
L\ge \frac{c_0}{M}
\]
for another universal constant $c_0>0$.  On \(I\) one has
\(\rho_P\equiv0\) or \(\rho_P\equiv1\).  Since
$\rho_*\in[\eta,1-\eta]$, the function
\[
\rho_P-\rho_*
\]
has fixed sign on $I$, and
\[
|\rho_P(\theta)-\rho_*(\theta)|\ge \eta
\qquad
\text{for every }\theta\in I.
\]

Let
\[
u=h_P-h_*.
\]
Then
\[
u+u''=\rho_P-\rho_*.
\]

Choose a nonnegative function $\psi\in C_c^2(0,1)$,
$\psi\not\equiv0$, and define
\[
\psi_I(\theta)
=
\psi\left(\frac{\theta-a}{L}\right)
\]
for $\theta\in I$, with $\psi_I=0$ outside $I$.  Since $\psi_I$ and its
first derivative vanish at the endpoints of $I$, integration by parts
gives
\[
\int_I (u+u'')\psi_I\,d\theta
=
\int_I u(\psi_I+\psi_I'')\,d\theta.
\]

Because $\rho_P-\rho_*$ has fixed sign and absolute value at least
$\eta$ on $I$,
\[
\left|
\int_I (u+u'')\psi_I\,d\theta
\right|
\ge
\eta\int_I\psi_I\,d\theta.
\]

On the other hand,
\[
\left|
\int_I u(\psi_I+\psi_I'')\,d\theta
\right|
\le
\|u\|_\infty
\int_I|\psi_I+\psi_I''|\,d\theta.
\]

We have
\[
\int_I\psi_I\,d\theta
=
L\int_0^1\psi(s)\,ds
=
A_\psi L,
\]
where $A_\psi>0$, and
\[
\psi_I''(\theta)
=
L^{-2}\psi''\left(\frac{\theta-a}{L}\right).
\]
Therefore
\[
\int_I|\psi_I+\psi_I''|\,d\theta
\le
L\int_0^1|\psi(s)|\,ds
+
L^{-1}\int_0^1|\psi''(s)|\,ds
\le
B_\psi L^{-1}
\]
for all sufficiently small $L$, where $B_\psi>0$ is independent of $I$
and $M$.  Combining the preceding estimates gives
\[
\eta A_\psi L
\le
\|u\|_\infty B_\psi L^{-1}.
\]
Thus
\[
\|u\|_\infty
\ge
cL^2
\ge
c'M^{-2}.
\]

This estimate is invariant under translations, because adding a first
harmonic $q$ to $u$ leaves $u+u''$ unchanged and the
integration-by-parts identity above remains valid:
\[
\int_I q(\psi_I+\psi_I'')\,d\theta
=
\int_I(q+q'')\psi_I\,d\theta
=
0.
\]

It is also unaffected by rotations or reflections of $P$, since these
operations only shift or reverse the $0/1$-valued density $\rho_P$ and
preserve the existence of an interval $I$ of length $\gtrsim M^{-1}$ on
which $\rho_P$ is constant, since the number of arcs is at most
\(2M+1\).

Therefore every congruent copy $\widetilde P$ of every
$P\in\mathcal R_{\le 2M+1}$ satisfies
\[
d_H(\widetilde P,K_*)\ge c'M^{-2}.
\]
In the congruence-invariant distance,
\[
d_{\mathcal E}(P,K_*)\ge c'M^{-2}.
\]

The family $\mathcal A_M$ also contains the disk $D$.  The disk of
width \(1\) has curvature-radius density identically \(1/2\), whereas
\(\rho_*(\theta)=1/2+\varepsilon\cos3\theta\) is not constant.  Hence
\(K_*\ne D\), and so
\[
d_{\mathcal E}(K_*,D)=d_0>0.
\]
For all sufficiently large $M$, $d_0\ge c'M^{-2}$.  Hence
\[
\inf_{R\in\mathcal A_M}d_{\mathcal E}(K_*,R)
\ge
c_{\rm app}M^{-2}.
\]

Taking the supremum over $W\in\mathcal W_1$ gives
\[
\delta_M\ge c_{\rm app}M^{-2}.
\]
Together with Proposition~\ref{prop:upper-rate}, this proves
\[
\delta_M=\Theta(M^{-2}).
\]
\end{proof}

This proposition identifies the intrinsic Hausdorff approximation
scale of the Reuleaux hierarchy.  The next subsection transfers this
approximation scale to the variational covering values.

\subsection{Transfer from Hausdorff approximation to covering area}
\label{sec:transfer}

Hausdorff approximation of test bodies controls the error of the
variational lower-bound hierarchy.

\begin{lemma}[Area transfer lemma]\label{lem:area-transfer}
There exist constants $C_{\rm tr},C_{\rm St}>0$ such that, for every
$M$,
\[
\aLeb-\Lambda_M
\le
C_{\rm tr}\delta_M+C_{\rm St}\delta_M^2.
\]
\end{lemma}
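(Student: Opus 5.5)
The plan is to enlarge a nearly optimal cover for \(\mathcal A_M\) by a parallel body and check that the enlargement covers all of \(\mathcal W_1\). Fix \(M\) and \(\eta>0\), and let \(U\) be a compact convex set admissible for \(\mathcal A_M\) with \(\operatorname{area}(U)\le\Lambda_M+\eta\). Since \(D\in\mathcal A_M\), after a global isometry we may assume \(D\subset U\). Lemma~\ref{lem:bounded} with \(A=\aLeb+1\) then gives \(U\subset B(0,2\aLeb+2)\). Hence the perimeter of \(U\) is bounded by an absolute constant \(P_0\): perimeter is monotone under inclusion of convex sets, so \(P_0=2\pi(2\aLeb+2)\) works.

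Next fix \(\delta>\delta_M\) and set \(U_\delta=U+\delta D_0\). We claim \(U_\delta\) contains a congruent copy of every \(W\in\mathcal W_1\). By definition of \(\delta_M\) there are \(R\in\mathcal A_M\) and \(q\in\mathcal E(2)\) with \(d_H(W,qR)<\delta\). Admissibility gives \(g\in\mathcal E(2)\) with \(gR\subset U\). Put \(\tilde g=gq^{-1}\); then \(\tilde g(qR)=gR\subset U\), and since isometries preserve Hausdorff distance, \(d_H(\tilde gW,gR)<\delta\). Therefore
\[
\tilde gW\subset gR+\delta D_0\subset U+\delta D_0=U_\delta .
\]
So \(U_\delta\) covers \(\mathcal W_1\), and by Proposition~\ref{prop:cw-equal} it is a universal cover. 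This gives \(\aLeb\le\operatorname{area}(U_\delta)\).

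The planar Steiner formula gives
\[
\operatorname{area}(U_\delta)=\operatorname{area}(U)+\operatorname{per}(U)\,\delta+\pi\delta^2\le\Lambda_M+\eta+P_0\delta+\pi\delta^2 .
\]
Letting \(\delta\downarrow\delta_M\) and \(\eta\downarrow0\) yields the lemma with \(C_{\rm tr}=P_0\) and \(C_{\rm St}=\pi\). These constants are independent of \(M\) because \(\Lambda_M\le\aLeb\).

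The one step that needs care is the uniform perimeter bound: the \(\eta\)-slack must not break boundedness, which is why \(A=\aLeb+1\) is used. The rest is bookkeeping with the congruence-invariant distance \(d_{\mathcal E}\): the near-minimizing isometry \(q\) is absorbed into the placement \(\tilde g\), so no exact realization of the infimum is needed. If \(d_{\mathcal E}(W,R)=\delta_M\) exactly, the strict choice \(\delta>\delta_M\) already handles it.
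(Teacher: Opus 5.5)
Your proof is correct and follows the same route as the paper: a near-optimal cover normalized to contain $D$, a uniform perimeter bound, a parallel enlargement by slightly more than $\delta_M$ with the near-minimizing isometry absorbed into the placement, and Steiner's formula. The only difference is the perimeter step. You use Lemma~\ref{lem:bounded} with monotonicity of perimeter, which gives $C_{\rm tr}=4\pi(\aLeb+1)$ and implicitly requires $\eta\le 1$; the paper instead inserts $h_U\ge 1/2$ into $\operatorname{area}(U)=\frac12\int h_U\,dS_U$ to get $\operatorname{per}(U)\le 4\operatorname{area}(U)$, a constant smaller by a factor of $\pi$.
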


\begin{proof}
Fix $\varepsilon>0$.  Choose a compact convex set $C_M$ that covers
every element of $\mathcal A_M$ and satisfies
\[
\operatorname{area}(C_M)\le \Lambda_M+\varepsilon.
\]

Since $D\in\mathcal A_M$, the set $C_M$ contains a congruent copy of
$D$.  Applying a global rigid motion to $C_M$, which preserves both
area and covering properties, we may assume that
\[
D\subset C_M,
\]
where $D$ is centered at the origin.

A uniform perimeter bound comes first.  Because $D\subset C_M$,
\[
h_{C_M}(u)\ge \frac12
\qquad
\text{for every }u\in S^1.
\]
Thus the fixed disk gives a uniform lower support bound in every
direction.
For planar convex bodies, the Minkowski support formula in
surface-area-measure form gives
\[
\operatorname{area}(C_M)
=
\frac12
\int_{S^1}h_{C_M}(u)\,dS_{C_M}(u),
\]
where \(S_{C_M}\) is the surface-area measure of \(C_M\).  Its total
mass is the perimeter:
\[
S_{C_M}(S^1)=\operatorname{per}(C_M).
\]
Therefore
\[
\operatorname{area}(C_M)
\ge
\frac12\cdot \frac12 \operatorname{per}(C_M)
=
\frac14\operatorname{per}(C_M).
\]
Hence
\[
\operatorname{per}(C_M)\le 4\operatorname{area}(C_M).
\]

Let $A_*$ be any fixed finite upper bound for $\aLeb$, obtained for
instance from any known universal cover.  Since
\[
\Lambda_M\le \aLeb\le A_*,
\]
we have, for $0<\varepsilon\le1$,
\[
\operatorname{area}(C_M)\le A_*+1.
\]
Thus
\[
\operatorname{per}(C_M)\le 4(A_*+1)=:P_*.
\]

Now let $W\in\mathcal W_1$ be arbitrary, and fix $\alpha>0$.  By the
definition of $\delta_M$, there exist $R_W\in\mathcal A_M$ and an
isometry $q_W\in\mathcal E(2)$ such that
\[
d_H(W,q_WR_W)\le\delta_M+\alpha.
\]

Since $C_M$ covers $R_W$, there exists a rigid motion
$g_W\in\mathcal E(2)$ such that
\[
g_WR_W\subset C_M.
\]
Set
\[
\widetilde g_W=g_Wq_W^{-1}.
\]
Hausdorff distance is invariant under rigid motions, so
\[
d_H(\widetilde g_WW,g_WR_W)\le\delta_M+\alpha.
\]
Therefore
\[
\widetilde g_WW
\subset
g_WR_W+(\delta_M+\alpha) B
\subset
C_M+(\delta_M+\alpha) B,
\]
where $B$ is the unit disk centered at the origin.  Since
$W\in\mathcal W_1$ was arbitrary,
\[
C_M+(\delta_M+\alpha)B
\]
contains a congruent copy of every constant-width body of width $1$.
By the constant-width reduction proved in Section~\ref{sec:constant-width},
$C_M+(\delta_M+\alpha)B$ is a universal cover for planar sets of diameter at
most one.  Hence
\[
\aLeb
\le
\operatorname{area}(C_M+(\delta_M+\alpha) B).
\]

By Steiner's formula for planar convex bodies,
\[
\operatorname{area}(C_M+tB)
=
\operatorname{area}(C_M)
+
t\,\operatorname{per}(C_M)
+
\pi t^2.
\]
Taking $t=\delta_M+\alpha$, we obtain
\[
\aLeb
\le
\operatorname{area}(C_M)
+
(\delta_M+\alpha)\operatorname{per}(C_M)
+
\pi(\delta_M+\alpha)^2.
\]

Using
\[
\operatorname{area}(C_M)\le \Lambda_M+\varepsilon
\]
and
\[
\operatorname{per}(C_M)\le P_*,
\]
we get
\[
\aLeb
\le
\Lambda_M+\varepsilon
+
P_*(\delta_M+\alpha)
+
\pi(\delta_M+\alpha)^2.
\]
Letting $\varepsilon\downarrow0$ and then $\alpha\downarrow0$, we obtain
\[
\aLeb-\Lambda_M
\le
P_*\delta_M+\pi\delta_M^2.
\]
Thus the desired estimate holds with
\[
C_{\rm tr}=P_*=4(A_*+1),
\qquad
C_{\rm St}=\pi.
\]
Both constants are fixed once the auxiliary universal-cover upper bound
\(A_*\) is fixed; in particular they are independent of \(M\).
\end{proof}

\subsection{Main quantitative convergence theorem}
\label{sec:main-quant}

Combining the Hausdorff approximation estimate with the area transfer
lemma gives the quantitative convergence theorem.

\begin{theorem}[Quantitative convergence of the variational hierarchy]\label{thm:quantitative}
There exists a constant $C>0$ such that
\[
0\le \aLeb-\Lambda_M\le C M^{-2}
\]
for all $M\ge1$.
\end{theorem}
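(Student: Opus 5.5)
The plan is to combine the two ingredients already established: Proposition~\ref{prop:upper-rate}, which gives \(\delta_M\le C_{\rm app}M^{-2}\), and the area transfer Lemma~\ref{lem:area-transfer}, which gives \(\aLeb-\Lambda_M\le C_{\rm tr}\delta_M+C_{\rm St}\delta_M^2\). The lower inequality \(0\le\aLeb-\Lambda_M\) needs no new work. It is exactly the statement \(\Lambda_M\le\aLeb\), which follows from \(\mathcal A_M\subseteq\mathcal W_1\) together with Proposition~\ref{prop:cw-equal}, as recorded in Proposition~\ref{prop:monotone}.

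For the upper inequality, I first substitute the approximation rate into the transfer estimate:
\[
\aLeb-\Lambda_M\le C_{\rm tr}C_{\rm app}M^{-2}+C_{\rm St}C_{\rm app}^2M^{-4}.
\]
Since \(M\ge1\), we have \(M^{-4}\le M^{-2}\). The claim therefore holds with
\[
C=C_{\rm tr}C_{\rm app}+C_{\rm St}C_{\rm app}^2.
\]
This constant is absolute, because \(C_{\rm tr}=4(A_*+1)\) and \(C_{\rm St}=\pi\) are fixed once the auxiliary upper bound \(A_*\) is chosen (for instance P\'al's cover, with \(A_*\le1\)), and \(C_{\rm app}\) does not depend on \(M\).

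The only point needing care is that Proposition~\ref{prop:upper-rate} must hold for every \(M\ge1\), not just for large \(M\). Its proof already handles this: it enlarges the constant to cover the finitely many small \(M\). Independently, \(\delta_M\) is uniformly bounded. Every \(W\in\mathcal W_1\) and the disk \(D\) have diameter \(1\), and after translating both to contain a common point each lies within distance \(1\) of that point, so \(d_{\mathcal E}(W,D)\le 1\). Hence \(\delta_M\le1\) for all \(M\), and the small-\(M\) cases are absorbed into \(C\) trivially.

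I do not expect a real obstacle here, since all analytic content sits in Lemma~\ref{lem:green-estimate}, Proposition~\ref{prop:upper-rate} and Lemma~\ref{lem:area-transfer}. The one thing to confirm is that \(\delta_M\), as defined with \(d_{\mathcal E}\), is the same quantity used in both results. It is: the transfer lemma uses exactly that supremum–infimum over \(\mathcal W_1\) and \(\mathcal A_M\).
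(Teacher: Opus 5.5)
Your proposal is correct and is essentially the paper's own proof. The lower inequality is the hierarchy property $\Lambda_M\le\aLeb$. The upper inequality comes from substituting $\delta_M\le C_{\rm app}M^{-2}$ (Proposition~\ref{prop:upper-rate}) into the transfer estimate of Lemma~\ref{lem:area-transfer} and absorbing the $M^{-4}$ term, which gives $C=C_{\rm tr}C_{\rm app}+C_{\rm St}C_{\rm app}^2$. Your extra observation that $\delta_M\le1$ (because $d_{\mathcal E}(W,D)\le1$) is valid, and offers a second way to handle small $M$ besides the paper's enlargement of $C_{\rm app}$.
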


\begin{proof}
The lower inequality
\[
0\le \aLeb-\Lambda_M
\]
is precisely the lower-bound property of the hierarchy, proved in
Section~\ref{sec:variational-hierarchy}:
\[
\Lambda_M\le \aLeb.
\]

For the upper estimate, Lemma~\ref{lem:area-transfer} gives
\[
\aLeb-\Lambda_M
\le
C_{\rm tr}\delta_M+C_{\rm St}\delta_M^2.
\]

By Proposition~\ref{prop:upper-rate},
\[
\delta_M\le C_{\rm app}M^{-2}.
\]
Therefore
\[
\aLeb-\Lambda_M
\le
C_{\rm tr}C_{\rm app}M^{-2}
+
C_{\rm St}C_{\rm app}^2M^{-4}.
\]

Since $M\ge1$, the $M^{-4}$ term is bounded by a constant multiple of
$M^{-2}$.  Thus one may take, for example,
\[
C=C_{\rm tr}C_{\rm app}+C_{\rm St}C_{\rm app}^2.
\]
With this choice,
\[
\aLeb-\Lambda_M
\le
C M^{-2}.
\]

Combining the two inequalities gives
\[
0\le \aLeb-\Lambda_M\le C M^{-2}.
\]
\end{proof}

\begin{corollary}[Tail stability of the hierarchy]\label{cor:tail-stability}
For every \(N\ge M\), the hierarchy satisfies
\[
0\le \Lambda_N-\Lambda_M\le C M^{-2}.
\]
In particular, successive hierarchy levels satisfy
\[
0\le \Lambda_{M+1}-\Lambda_M\le C M^{-2}.
\]
\end{corollary}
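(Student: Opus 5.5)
The corollary is a direct sandwich argument that combines the monotonicity of the hierarchy with the quantitative convergence theorem; no new geometry is needed.

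Fix \(N\ge M\ge1\). For the lower inequality, invoke Proposition~\ref{prop:monotone} with \(M_1=M\) and \(M_2=N\). This gives \(\Lambda_M\le\Lambda_N\), that is, \(0\le\Lambda_N-\Lambda_M\).

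For the upper inequality, use the bound \(\Lambda_N\le\aLeb\), established in Section~\ref{sec:variational-hierarchy} and also restated in Proposition~\ref{prop:monotone}. Subtracting \(\Lambda_M\) from both sides and applying Theorem~\ref{thm:quantitative} at level \(M\) yields
\[
\Lambda_N-\Lambda_M\le \aLeb-\Lambda_M\le CM^{-2},
\]
with the same absolute constant \(C\) as in Theorem~\ref{thm:quantitative}. The bound is uniform in \(N\) because it depends only on \(M\).

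The successive-level statement is the special case \(N=M+1\).

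The argument has no substantive obstacle. The only point to check is that the constant \(C\) in the corollary is the one produced in Theorem~\ref{thm:quantitative}, namely \(C=C_{\rm tr}C_{\rm app}+C_{\rm St}C_{\rm app}^2\). That constant was obtained independently of \(M\), so it carries over unchanged.
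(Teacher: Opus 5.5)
Your proof is correct and matches the paper's argument: monotonicity (Proposition~\ref{prop:monotone}) gives the lower bound, and \(\Lambda_N\le\aLeb\) together with Theorem~\ref{thm:quantitative} gives \(\Lambda_N-\Lambda_M\le\aLeb-\Lambda_M\le CM^{-2}\), with \(N=M+1\) as the special case. Your observation that the constant \(C\) is carried over unchanged from the theorem, independent of \(M\) and \(N\), is also consistent with the paper.
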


\begin{proof}
The lower bound follows from monotonicity.  Since \(\Lambda_N\le\aLeb\),
we have
\[
\Lambda_N-\Lambda_M
\le
\aLeb-\Lambda_M
\le
C M^{-2},
\]
where the last inequality is Theorem~\ref{thm:quantitative}.
The estimate for \(N=M+1\) is the special case of the same inequality.
\end{proof}

Theorem~\ref{thm:quantitative} is the quantitative strengthening of
the exactness theorem.  It shows that the Reuleaux-type variational
hierarchy converges to $\aLeb$ at the \(M^{-2}\) finite-arc approximation
rate.  Corollary~\ref{cor:tail-stability} also shows that the tail of the
hierarchy is stable at the same scale.

We record the precise relationship among the estimates proved in this
section:
\[
\delta_M=\Theta(M^{-2})
\]
is a statement about the optimal Hausdorff approximation rate of the
Reuleaux-type families;
\[
\aLeb-\Lambda_M
\le
C_{\rm tr}\delta_M+C_{\rm St}\delta_M^2
\]
is the transfer estimate from Hausdorff approximation of test bodies
to covering-area convergence;
and
\[
0\le \aLeb-\Lambda_M\le C M^{-2}
\]
is the quantitative convergence rate of the variational lower-bound
hierarchy.

The matching lower bound in \(\delta_M=\Theta(M^{-2})\) is a Hausdorff
approximation statement.  The hierarchy value uses the transfer estimate
above, which gives the one-sided convergence rate
\[
0\le \aLeb-\Lambda_M\le C M^{-2}.
\]


\section{Organization of the Certified Finite-Test Proof}
\label{sec:roadmap}

Having established the exact Reuleaux-type hierarchy, we now record the
finite-test proof used for the certified lower bound.  This section
recaps the finite-test setup described in the main text and fixes the
notation used in Sections~\ref{sec:low-order}--\ref{sec:interval-certification}.
The finite test is extracted from the second hierarchy level, reduced to
a five-parameter placement problem, and certified by interval arithmetic,
yielding
\[
\aLeb\ge 0.834.
\]

The hierarchy in Part~I is built from
\[
\mathcal A_M
=
\{D\}\cup\bigcup_{m=1}^{M}\mathcal R_{2m+1}.
\]

Only odd arc counts enter the hierarchy.  The \(M\)-th hierarchy
level contains the Reuleaux-type families with arc counts
\[
3,5,\ldots,2M+1.
\]

The second hierarchy level is the low-order truncation involving the
disk and the two smallest nontrivial odd Reuleaux families:
\[
\mathcal A_2
=
\{D\}\cup\mathcal R_3\cup\mathcal R_5.
\]
This truncation is geometrically natural.  The disk $D$ is the most
symmetric constant-width body of width $1$, while $\mathcal R_3$ and
$\mathcal R_5$ are the first finite-arc Reuleaux-type families in the
hierarchy.

From this level we extract the regular representatives
\[
R_3\in\mathcal R_3,
\qquad
R_5\in\mathcal R_5,
\]
where $R_3$ is the regular Reuleaux triangle of width $1$, and $R_5$
is the regular Reuleaux pentagon of width $1$.  Together with the disk
$D$, they form the finite test family
\[
\mathcal F_{3,5}^{\rm R}
:=
\{D,R_3,R_5\}
\subset\mathcal W_1.
\]

Define
\[
L_{3,5}^{\rm R}
:=
\Lambda(\mathcal F_{3,5}^{\rm R}).
\]
Since \(\mathcal F_{3,5}^{\rm R}\subset\mathcal W_1\), the admissible
class for covering all width-one constant-width bodies is contained in
the admissible class for covering \(\mathcal F_{3,5}^{\rm R}\), so
\[
L_{3,5}^{\rm R}\le \aLeb.
\]
It remains only to prove
\[
L_{3,5}^{\rm R}\ge 0.834
\]
because this implies
\[
\aLeb\ge 0.834.
\]

The value $L_{3,5}^{\rm R}$ can be written as an optimization over relative
placements of $D,R_3,R_5$.  Since the area of the convex hull is
invariant under global Euclidean motions, we fix the disk $D$ at the
origin:
\[
D=\{x\in\R^2:|x|\le 1/2\}.
\]
The remaining degrees of freedom are the relative placements of $R_3$
and $R_5$.  After using global motion and symmetry normalizations,
these placements are encoded by a five-dimensional parameter vector
\[
z=(x_3,y_3,\psi_5,x_5,y_5).
\]
Here $(x_3,y_3)$ denotes the translation of the regular Reuleaux
triangle in the chosen normalization, $\psi_5$ denotes the relative
rotation of the regular Reuleaux pentagon, and $(x_5,y_5)$ denotes the
translation of the pentagon.  The global orthogonal freedom, together
with the symmetry of \(R_3\), is used to fix the orientation of the
triangle.

The corresponding area function is
\[
A(z)
=
\operatorname{area}
\operatorname{conv}
\bigl(
D\cup R_3(z)\cup R_5(z)
\bigr).
\]
With the normalization proved in Section~\ref{sec:normalization},
\[
L_{3,5}^{\rm R}
=
\inf_z A(z),
\]
where the infimum is taken over all admissible normalized
configurations.  The certified lower-bound problem is
\[
\inf_z A(z)\ge 0.834.
\]
The rest of Sections~\ref{sec:low-order}--\ref{sec:interval-certification}
proves this finite-dimensional inequality.

The normalized parameter space is still unbounded in the translation
variables.  We reduce it to a compact region by a sequence of analytic
estimates and interval certificates.

Elementary disk-plus-point estimates show that if the center of
$R_3$ or $R_5$ is sufficiently far from the origin, then the convex
hull already has area greater than $0.834$, giving a coarse bounded
domain.

One-body radial collar certificates remove the remaining cases
in which either $R_3$ or $R_5$ lies outside a smaller core box.  After
these reductions, all configurations not already certified to have area
at least \(0.834\) are contained in the compact five-dimensional core
domain
\[
\Omega_{\rm core}
=
[-0.08,0.08]^2
\times
[0,\pi/5]
\times
[-0.08,0.08]^2.
\]
The angle interval $[0,\pi/5]$ comes from the rotational and
reflection symmetries of the regular Reuleaux pentagon.

The final core task is therefore to prove
\[
A(z)\ge 0.834
\qquad
\text{for every }z\in\Omega_{\rm core}.
\]
Together with the collar certificates, this is carried out by
subdividing the remaining compact certified domains into boxes and
proving a certified lower bound on each box using interval arithmetic.

The certificate has the following form.  For each remaining compact
parameter domain \(\Omega\), the interval verifier checks a finite box partition
\[
\Omega
=
\bigcup_\alpha B_\alpha.
\]
These domains include the core domain above and the one-body collar
domains left by the preceding reductions.

For every leaf box \(B_\alpha\), the certificate verifies the appropriate
interval lower-bound statement for that domain; on core boxes this is
\[
\inf_{z\in B_\alpha}A(z)\ge 0.834.
\]
The verification uses interval arithmetic and local convex-hull lower
bounds.  Different boxes may be certified by different local methods,
such as inner-polygon bounds, fixed-template convexity bounds, or
simple-template mean-value bounds.  Together, these local methods provide
a rigorous finite proof that no configuration in the certified parameter
domains has area below \(0.834\).

A deterministic interval verification then checks the entire certificate:
it verifies the constants, the domain reductions, the box coverage, the
absence of gaps between certified regions, and the local lower bound on
every leaf box.  Once all checks pass, the certificate proves in
particular
\[
\inf_{z\in\Omega_{\rm core}} A(z)\ge 0.834.
\]
Together with the collar certificates and the analytic reductions, this
proves
\[
L_{3,5}^{\rm R}\ge 0.834.
\]
Since
\[
L_{3,5}^{\rm R}\le \aLeb,
\]
the final conclusion is
\[
\aLeb\ge 0.834.
\]

Subsequent sections supply the finite-test comparison, the normalization,
the analytic and collar reductions, the local interval methods, and the
final certified lower-bound theorem.

\section{The finite Reuleaux test family \texorpdfstring{$\mathcal F_{3,5}^{\rm R}$}{F3,5R}}
\label{sec:low-order}

The finite Reuleaux subtest used in the certificate consists of the disk
and two regular representatives from the second hierarchy level:
\[
D,\qquad R_3,\qquad R_5,
\]
where \(D\) is the disk of radius \(1/2\) and
\[
R_3:=R_3^{\rm reg}\in\mathcal R_3,
\qquad
R_5:=R_5^{\rm reg}\in\mathcal R_5
\]
are the regular Reuleaux triangle and regular Reuleaux pentagon of
width \(1\).
Since all three bodies belong to $\mathcal W_1$, the
family
\[
\mathcal F_{3,5}^{\rm R}
:=
\{D,R_3,R_5\}
\]
is a legitimate finite test family for the Lebesgue universal covering
problem.  The regular subtest, the associated five-parameter placement
model, and the certification funnel are illustrated in Fig.~2 of the
main text.  The placement model can also be visualized in the
interactive demo~\cite{PlacementDemo}.

The goal of Part~II is to prove that this finite test family already
forces area at least $0.834$.

\subsection{The finite-test lower-bound value}

Recall that for any subfamily $\mathcal F\subseteq \mathcal W_1$, the
associated variational lower-bound value is
\[
\Lambda(\mathcal F)
=
\inf\left\{
\operatorname{area}(U):
\begin{array}{l}
U\subset\R^2 \text{ is compact and convex, and }U\\
\text{contains a congruent copy of every }K\in\mathcal F
\end{array}
\right\}.
\]

For the finite regular Reuleaux family $\mathcal F_{3,5}^{\rm R}$, define
\[
L_{3,5}^{\rm R}
:=
\Lambda(\mathcal F_{3,5}^{\rm R}).
\]
Since
\[
\mathcal F_{3,5}^{\rm R}\subset\mathcal W_1,
\]
the admissible class for covering all width-one constant-width bodies is
contained in the admissible class for covering
\(\mathcal F_{3,5}^{\rm R}\), so
\[
L_{3,5}^{\rm R}\le \aLeb.
\]
Any rigorous lower bound on $L_{3,5}^{\rm R}$ is automatically a
rigorous lower bound on $\aLeb$.  In particular,
\[
L_{3,5}^{\rm R}\ge 0.834
\qquad\Longrightarrow\qquad
\aLeb\ge 0.834.
\]
This implication is the fundamental reason why the finite Reuleaux test
family $\mathcal F_{3,5}^{\rm R}$ is sufficient for the certified lower bound.

\subsection{Convex-hull formulation}

Because $\mathcal F_{3,5}^{\rm R}$ is finite, $L_{3,5}^{\rm R}$ can be written as an
infimum over relative placements.  Let $g_0,g_3,g_5\in\mathcal E(2)$.
A convex set containing congruent copies of $D,R_3,R_5$ at these
placements must contain
\[
\operatorname{conv}
\bigl(
g_0D\cup g_3R_3\cup g_5R_5
\bigr).
\]
Conversely, this convex hull itself is a compact convex set containing
the three placed bodies.  Hence
\[
L_{3,5}^{\rm R}
=
\inf_{g_0,g_3,g_5\in\mathcal E(2)}
\operatorname{area}
\operatorname{conv}
\bigl(
g_0D\cup g_3R_3\cup g_5R_5
\bigr).
\]

The area is invariant under a common Euclidean motion.  Applying
\(g_0^{-1}\) to all three placed bodies does not change the convex-hull
area, so we may fix
\[
g_0D=D,
\qquad
D=\{x\in\R^2:|x|\le 1/2\},
\]
and optimize only over the relative placements of $R_3$ and $R_5$.
Thus
\[
L_{3,5}^{\rm R}
=
\inf_{g_3,g_5\in\mathcal E(2)}
\operatorname{area}
\operatorname{conv}
\bigl(
D\cup g_3R_3\cup g_5R_5
\bigr).
\]
This expression is the geometric core of the finite test problem.

\subsection{The area functional}

In the complete normalization proved in Section~\ref{sec:normalization},
the remaining relative
configuration is represented by a five-dimensional parameter vector
\[
z=(x_3,y_3,\psi_5,x_5,y_5).
\]
The exact normalization and symmetry reductions are described in
Section~\ref{sec:normalization}.  For the present section, it is
enough to view $z$ as a complete parameter for the relative placements
of $R_3$ and $R_5$ after fixing the disk $D$.

Let $R_3(z)$ denote the placed regular Reuleaux triangle, and let
$R_5(z)$ denote the placed regular Reuleaux pentagon.  Define
\[
A(z)
=
\operatorname{area}
\operatorname{conv}
\bigl(
D\cup R_3(z)\cup R_5(z)
\bigr).
\]
With this normalization,
\[
L_{3,5}^{\rm R}
=
\inf_z A(z),
\]
where the infimum is over all admissible normalized configurations.
The certified lower-bound problem is therefore reduced to proving
\[
A(z)\ge 0.834
\]
for every admissible parameter value $z$.

Sections~\ref{sec:normalization}--\ref{sec:interval-certification} establish this
inequality by analytic reductions and interval certificates on the
remaining compact parameter domains.

\subsection{Relation to the polygonal test family \texorpdfstring{$\mathcal F_{3,5}^{\rm P}$}{F3,5P}}

The finite Reuleaux test is naturally related to the polygonal
finite-test method used by Brass and Sharifi~\cite{BrassSharifi2005},
which is based on the disk, an equilateral triangle, and a regular
pentagon.  We show below that, at the level of finite-test values, the
Reuleaux test is a mathematical strengthening of this polygonal test:
its finite-test value is at least as large.  This comparison is relevant
because the Brass--Sharifi polygonal finite test gives the previously
published lower-bound benchmark
\[
\aLeb\ge0.832.
\]
Let
$\triangle$ be the equilateral triangle of diameter $1$, and let $P_5$
be the regular pentagon of diameter $1$, placed so that
\[
\triangle\subset R_3,
\qquad
P_5\subset R_5.
\]
Define the polygonal finite test family
\[
\mathcal F_{3,5}^{\rm P}
:=
\{D,\triangle,P_5\},
\qquad
L_{3,5}^{\rm P}
:=
\Lambda(\mathcal F_{3,5}^{\rm P}).
\]

For any common normalized placement $z$, let
\[
A_{3,5}^{\rm P}(z)
=
\operatorname{area}
\operatorname{conv}
\bigl(
D\cup \triangle(z)\cup P_5(z)
\bigr),
\]
and
\[
A_{3,5}^{\rm R}(z)
=
\operatorname{area}
\operatorname{conv}
\bigl(
D\cup R_3(z)\cup R_5(z)
\bigr)
=
A(z).
\]

Since
\[
\triangle(z)\subset R_3(z),
\qquad
P_5(z)\subset R_5(z),
\]
we have the pointwise inclusion
\[
\operatorname{conv}
\bigl(
D\cup \triangle(z)\cup P_5(z)
\bigr)
\subset
\operatorname{conv}
\bigl(
D\cup R_3(z)\cup R_5(z)
\bigr).
\]
Therefore
\[
A_{3,5}^{\rm P}(z)\le A_{3,5}^{\rm R}(z)
\]
for every normalized configuration \(z\).  The common normalized
placement model gives only a restricted class of polygonal placements,
whereas \(L_{3,5}^{\rm P}\) is the full polygonal finite-test infimum.
Hence
\[
L_{3,5}^{\rm P}
\le
\inf_z A_{3,5}^{\rm P}(z)
\le
\inf_z A_{3,5}^{\rm R}(z)
=
L_{3,5}^{\rm R},
\]
where the last equality is the complete Reuleaux normalization proved in
Section~\ref{sec:normalization}.

Thus the finite Reuleaux test family \(\mathcal F_{3,5}^{\rm R}\)
is no weaker, at the level of finite-test values, than the polygonal
test family \(\mathcal F_{3,5}^{\rm P}\) associated with the
Brass--Sharifi finite-test method:
\[
L_{3,5}^{\rm P}\le L_{3,5}^{\rm R}.
\]
This is a purely geometric comparison, independent of the numerical
certificate: replacing the polygonal test bodies by their Reuleaux
constant-width completions can only increase the convex-hull area for
each fixed placement, and hence cannot weaken the resulting finite-test
lower bound.

\subsection{What remains to be certified}

The preceding subsections reduce the certified lower-bound theorem to
the following finite-dimensional statement:
\[
\inf_z
\operatorname{area}
\operatorname{conv}
\bigl(
D\cup R_3(z)\cup R_5(z)
\bigr)
\ge 0.834.
\]
The rest of Part~II proves this statement by a five-parameter
normalization, analytic and collar reductions to a compact core box,
local interval lower bounds, and a final certificate assembly, concluding
\[
\aLeb\ge 0.834.
\]

\section{Complete Five-Parameter Normalization}
\label{sec:normalization}

The parameter normalization below applies to the finite test problem
\[
L_{3,5}^{\rm R}
=
\inf
\operatorname{area}
\operatorname{conv}
\bigl(
g_0D\cup g_3R_3\cup g_5R_5
\bigr),
\]
where the infimum is taken over all Euclidean placements of
$D,R_3,R_5$.  The goal of this section is to show that, after using
the global Euclidean invariance and the intrinsic symmetries of the
regular Reuleaux bodies, every configuration can be represented by a
five-dimensional parameter vector
\[
z=(x_3,y_3,\psi_5,x_5,y_5).
\]
The bounded reduction of these five parameters is treated later in
Sections~\ref{sec:analytic-reduction} and~\ref{sec:collar}.

\subsection{Reference bodies}

Let
\[
D=\{x\in\R^2:|x|\le 1/2\}
\]
be the disk of radius $1/2$ centered at the origin.

Fix once and for all reference copies $R_3^0$ and $R_5^0$ of the
regular Reuleaux triangle and regular Reuleaux pentagon of width $1$.
We choose $R_3^0$ and $R_5^0$ to be centered at the origin.  Their
precise initial orientation is irrelevant, but it will be kept fixed
throughout the certificate.  Here ``centered'' means that the common
dihedral symmetry center of the regular Reuleaux body, equivalently the
circumcenter of the underlying regular polygon, is placed at the origin.

For $\psi\in\R$, let $Q_\psi$ be the counterclockwise rotation matrix
by angle $\psi$.  For $p\in\R^2$, write
\[
T_p(x)=x+p
\]
for translation by $p$.

The regular Reuleaux triangle and pentagon have dihedral symmetry,
including reflection symmetry.  Hence an orientation-reversing
congruent copy is also representable, after using a symmetry of the
reference body, by a rotation followed by a translation.  Thus placed
regular Reuleaux triangle and pentagon copies have the form
\[
T_pQ_\alpha R_3^0,
\qquad
T_qQ_\beta R_5^0.
\]

Since $D$ is invariant under orthogonal transformations about its
center, and the objective is invariant under a common Euclidean motion,
one global translation and one global orthogonal freedom may be used to
normalize the configuration.

\subsection{Fixing the disk and the orientation of \texorpdfstring{$R_3$}{R3}}

Consider an arbitrary configuration
\[
g_0D,\qquad g_3R_3^0,\qquad g_5R_5^0.
\]

Applying the inverse of $g_0$ to the whole configuration, we may
assume
\[
g_0D=D.
\]

Since the disk is invariant under orthogonal transformations about the
origin, there remains a global orthogonal freedom.  Using this freedom,
and the dihedral symmetry of the regular Reuleaux triangle, we fix the
orientation of \(R_3\).  In other words, after a further orthogonal
transformation about the origin, every configuration can be written in
the form
\[
D,\qquad
T_{p_3}R_3^0,\qquad
T_{p_5}Q_{\psi_5}R_5^0,
\]
where
\[
p_3=(x_3,y_3),
\qquad
p_5=(x_5,y_5),
\]
and $\psi_5$ is the relative angle of the pentagon with respect to the
fixed reference orientation of the triangle.

After these normalizations, the continuous degrees of freedom are
\[
x_3,\ y_3,\ \psi_5,\ x_5,\ y_5.
\]
The five-parameter representation is
\[
z=(x_3,y_3,\psi_5,x_5,y_5).
\]

For such a parameter vector, define
\[
R_3(z):=T_{(x_3,y_3)}R_3^0,
\]
and
\[
R_5(z):=T_{(x_5,y_5)}Q_{\psi_5}R_5^0.
\]

The corresponding area functional is
\[
A(z)
=
\operatorname{area}
\operatorname{conv}
\bigl(
D\cup R_3(z)\cup R_5(z)
\bigr).
\]

\subsection{Angle reduction for \texorpdfstring{$R_5$}{R5}}

The regular Reuleaux pentagon has fivefold rotational symmetry and
reflection symmetry, so the value of $A(z)$ is unchanged under
replacing
\[
\psi_5
\quad\text{by}\quad
\psi_5+\frac{2\pi k}{5},
\qquad k\in\Z,
\]
provided the same placed pentagon is represented in the equivalent
orientation.

Moreover, choose a reflection symmetry axis of the fixed reference
triangle \(R_3^0\).  Reflecting the entire normalized configuration
across this axis preserves both the disk and \(R_3^0\), while replacing
the relative pentagon angle \(\psi_5\) by \(-\psi_5\) and reflecting the
translation variables.  Since the translation variables range over all
of \(\R^2\), this reflection does not change the infimum of the area
functional.

For the purpose of minimizing $A(z)$, it is sufficient
to restrict the pentagon angle to the fundamental interval
\[
0\le \psi_5\le \frac{\pi}{5}.
\]

The normalized full parameter domain is
\[
\Omega_{\rm full}
=
\R^2
\times
\left[0,\frac{\pi}{5}\right]
\times
\R^2.
\]

Therefore
\[
L_{3,5}^{\rm R}
=
\inf_{z\in\Omega_{\rm full}} A(z).
\]

We emphasize that this is still an unbounded five-dimensional problem
because the translation variables $(x_3,y_3)$ and $(x_5,y_5)$ are not
yet restricted.  The reduction to a compact domain is carried out in
the next two sections.

\subsection{Completeness of the normalization}

We record the preceding discussion as a proposition.

\begin{proposition}\label{prop:normalization}
Every placement of $D,R_3,R_5$ is congruent to a normalized placement
of the form
\[
D,\qquad
T_{(x_3,y_3)}R_3^0,\qquad
T_{(x_5,y_5)}Q_{\psi_5}R_5^0,
\]
with
\[
(x_3,y_3,x_5,y_5)\in\R^4,
\qquad
\psi_5\in\left[0,\frac{\pi}{5}\right].
\]
Consequently,
\[
L_{3,5}^{\rm R}
=
\inf_{(x_3,y_3,\psi_5,x_5,y_5)\in\Omega_{\rm full}}
\operatorname{area}
\operatorname{conv}
\bigl(
D\cup T_{(x_3,y_3)}R_3^0
\cup
T_{(x_5,y_5)}Q_{\psi_5}R_5^0
\bigr).
\]
\end{proposition}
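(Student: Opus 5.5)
The plan is to start from an arbitrary triple of placements \(g_0D,g_3R_3^0,g_5R_5^0\) with \(g_i\in\mathcal E(2)\), and to apply a sequence of area-preserving global isometries and intrinsic symmetry substitutions until the configuration has the stated normal form. At each stage the set \(\operatorname{conv}(g_0D\cup g_3R_3^0\cup g_5R_5^0)\) is carried by one common isometry, so its area does not change. The infimum identity will then follow from the convex-hull formula for \(L_{3,5}^{\rm R}\) in Section~\ref{sec:low-order}, which already reduces \(L_{3,5}^{\rm R}\) to an infimum over \(g_0,g_3,g_5\).

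First I apply \(g_0^{-1}\) to everything, so that the disk becomes \(D\). Next I write the remaining placements as \(x\mapsto O_3x+p_3\) and \(x\mapsto O_5x+p_5\) with \(O_i\in O(2)\). If \(\det O_3=-1\), I use a reflection \(S\) in the dihedral symmetry group of \(R_3^0\), which fixes the origin, and write \(O_3R_3^0=(O_3S)R_3^0\); this replaces \(O_3\) by a rotation. I treat \(R_5^0\) the same way. I then apply the global rotation \(Q_{-\alpha}\) about the origin. This preserves \(D\) and brings the triangle to \(T_{p_3'}R_3^0\), while the pentagon becomes \(T_{p_5'}Q_{\psi}R_5^0\) for some \(\psi\in\R\). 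The new translation vectors \(p_3',p_5'\) are arbitrary points of \(\R^2\), so no constraint on the translation variables arises.

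The step that needs the most care is reducing the angle \(\psi\) to \([0,\pi/5]\). I use the rotational symmetry \(Q_{2\pi/5}R_5^0=R_5^0\) to replace \(\psi\) by a representative in \([-\pi/5,\pi/5)\); this substitution does not change the placed set. If \(\psi<0\), I apply the global reflection \(\sigma\) in a symmetry axis \(\ell\) of \(R_3^0\) through the origin. This map fixes \(D\) and sends \(T_{p_3'}R_3^0\) to \(T_{\sigma p_3'}R_3^0\). It also sends \(T_{p_5'}Q_\psi R_5^0\) to \(T_{\sigma p_5'}\,\sigma Q_\psi R_5^0\). To handle the pentagon I need \(\sigma Q_\psi R_5^0=Q_{-\psi+c}R_5^0\) for a fixed constant \(c\). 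This holds because \(\sigma Q_\psi=Q_{-\psi}\sigma\) and \(\sigma R_5^0=Q_{c}R_5^0\), since \(\sigma\) composed with a suitable reflection symmetry of \(R_5^0\) is a rotation.

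The constant \(c\) is where the argument can fail. If the fixed reference orientations are chosen so that \(\ell\) is also a symmetry axis of \(R_5^0\), then \(c\equiv0\) modulo \(2\pi/5\), and \(\psi\) simply becomes \(-\psi\in(0,\pi/5]\). In general, though, \(c\) need not vanish modulo \(2\pi/5\). In that case the symmetric fundamental interval is centered at \(c/2\) rather than at \(0\), and I would have to state the normalization relative to that offset. So I will make explicit the convention that the reference triangle and pentagon share a symmetry axis. This is allowed, because the statement leaves their initial orientations free to be fixed once. With that convention, the resulting placement has the stated form, and congruence together with invariance of area gives the displayed formula for \(L_{3,5}^{\rm R}\). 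The reverse inequality is immediate, since every normalized placement is a placement.
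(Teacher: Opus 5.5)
Your route is the paper's: move the disk to $D$, absorb orientation-reversing placements into the dihedral symmetries, rotate about the origin to align the triangle, reduce $\psi$ modulo $2\pi/5$, and reflect in an axis of $R_3^0$. Your computation $\sigma Q_\psi R_5^0=Q_{c-\psi}R_5^0$ is correct. It pinpoints something the paper's proof passes over: its claim that this reflection ``identifies $\psi_5$ with $-\psi_5$'' holds only when the chosen axis of $R_3^0$ is also a symmetry axis of $R_5^0$. Under your shared-axis convention, your argument is complete, including the two-sided infimum identity.

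The gap is that this convention is not yours to impose. The paper fixes $R_3^0$ and $R_5^0$ in whatever orientation the archived certificate uses, and it asserts that this orientation is irrelevant. A proof valid only for compatible orientations therefore does not establish the proposition as it is used later.

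It is also not true that for $c\not\equiv0\pmod{2\pi/5}$ you would have to shift the interval. You have not used the full residual symmetry. The global rotation $Q_{2\pi/3}$ about the origin fixes $D$. It sends $T_pR_3^0$ to $T_{Q_{2\pi/3}p}R_3^0$, and it sends $T_qQ_\psi R_5^0$ to $T_{Q_{2\pi/3}q}Q_{\psi+2\pi/3}R_5^0$. Combine this with $Q_{2\pi/5}R_5^0=R_5^0$ and with the identity $2\cdot\frac{2\pi}{3}-3\cdot\frac{2\pi}{5}=\frac{2\pi}{15}$. Together they let you change $\psi$ by any multiple of $2\pi/15$. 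Hence $\psi$ can be brought into $[0,2\pi/15)\subset[0,\pi/5]$ by rotations alone, for arbitrary reference orientations and with no reflection step. (Adding the reflection would even allow an interval of length $\pi/15$; $[0,\pi/5]$ is just a convenient superset.)

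Replace your final paragraph with this observation. That removes the convention and proves the statement in the generality the paper needs. The same remark repairs the reflection step in the paper's own proof.
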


\begin{proof}
Start with an arbitrary placement of the three bodies.  A common
Euclidean motion sends the placed disk to the reference disk $D$
centered at the origin.  Since \(D\) is invariant under orthogonal
transformations about the origin, a further orthogonal transformation
can be used to align the placed Reuleaux triangle with the reference
orientation of \(R_3^0\).  If this alignment is orientation-reversing,
the reflection is absorbed by the dihedral symmetry of \(R_3^0\).  The
triangle is then represented only by its translation vector
\((x_3,y_3)\).

After this normalization, the pentagon is represented by its
translation vector \((x_5,y_5)\) and a relative rotation angle
\(\psi_5\).  The fivefold rotational symmetry of \(R_5^0\) reduces
\(\psi_5\) modulo \(2\pi/5\).  Reflection across a symmetry axis of
\(R_3^0\) preserves \(D\) and \(R_3^0\), identifies \(\psi_5\) with
\(-\psi_5\), and only reflects the translation variables.  Therefore
every configuration has a representative with
\[
0\le \psi_5\le \frac{\pi}{5}.
\]

Since the area of the convex hull is invariant under the common
Euclidean motions and reflections used in this normalization, taking
the infimum over all original placements is the same as taking the
infimum over $\Omega_{\rm full}$.
\end{proof}

The rest of Part~II works with the five-dimensional parameter vector
\[
z=(x_3,y_3,\psi_5,x_5,y_5)
\]
and the area functional
\[
A(z)
=
\operatorname{area}
\operatorname{conv}
\bigl(D\cup R_3(z)\cup R_5(z)\bigr).
\]
Analytic estimates reduce the unbounded domain $\Omega_{\rm full}$ to a
bounded parameter region.

\section{Analytic Reduction to a Bounded Parameter Domain}
\label{sec:analytic-reduction}

The five-parameter normalization in Section~\ref{sec:normalization}
reduces the finite test problem to
\[
L_{3,5}^{\rm R}
=
\inf_{z\in\Omega_{\rm full}} A(z),
\qquad
\Omega_{\rm full}
=
\R^2\times[0,\pi/5]\times\R^2,
\]
where
\[
z=(x_3,y_3,\psi_5,x_5,y_5)
\]
and
\[
A(z)
=
\operatorname{area}
\operatorname{conv}
\bigl(
D\cup R_3(z)\cup R_5(z)
\bigr).
\]

This domain is still unbounded in the four translation variables.  The
purpose of this section is to prove an analytic far-field reduction:
if either $R_3$ or $R_5$ is translated too far from the disk, then
the convex hull already has area greater than $0.834$.  Hence any
normalized configuration with
\[
A(z)<0.834
\]
must lie in a bounded region.

The estimates in this section use only elementary geometry: the convex
hull of the disk and a single far point already provides a lower bound
for the area of the full convex hull.

\subsection{The disk-plus-point area function}

Let $D=B(0,1/2)$, and let $p\in\R^2$ satisfy
\[
|p|=\rho\ge \frac12.
\]
Define
\[
G(\rho)
=
\operatorname{area}
\operatorname{conv}
\bigl(D\cup\{p\}\bigr).
\]

The following formula will be used repeatedly.

\begin{lemma}\label{lem:disk-point}
For $\rho\ge1/2$,
\[
G(\rho)
=
\frac{\pi}{4}
+
\frac{1}{2}\sqrt{\rho^2-\frac14}
-
\frac14
\arccos\left(\frac{1}{2\rho}\right).
\]
Moreover, $G$ is strictly increasing for $\rho>1/2$.
\end{lemma}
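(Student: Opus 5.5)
The plan is to compute the area of the convex hull directly by decomposing it along the two tangent lines from $p$ to the circle $\partial D$. For $\rho>1/2$, the hull $\operatorname{conv}(D\cup\{p\})$ is bounded by two tangent segments from $p$ to tangency points $t_\pm$ and by the far arc of $\partial D$ between $t_\pm$. By symmetry we may take $p=(\rho,0)$. The tangency points satisfy $\langle t_\pm, p-t_\pm\rangle=0$, so the angle $\varphi$ at the origin between $t_\pm$ and $p$ satisfies $\cos\varphi=\frac{1/2}{\rho}$, that is, $\varphi=\arccos(1/(2\rho))$. The tangent length is $\sqrt{\rho^2-1/4}$ by Pythagoras.

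The hull then splits into two pieces:
\begin{itemize}
\item[(i)] the kite with vertices $0,t_+,p,t_-$, made of two right triangles with legs $1/2$ and $\sqrt{\rho^2-1/4}$, with total area $\frac12\sqrt{\rho^2-1/4}$;
\item[(ii)] the circular sector of $D$ of central angle $2\pi-2\varphi$ on the far side, with area $\frac12\cdot\frac14\,(2\pi-2\varphi)=\frac{\pi}{4}-\frac{\varphi}{4}$.
\end{itemize}
These pieces have disjoint interiors and their union is the hull. This holds because the near sector (angle $2\varphi$) lies inside the kite, the kite is convex and contains $p$, and the boundary of the union consists of the two tangent segments together with the far arc, which is exactly the convex hull boundary. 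Adding the two areas gives the stated formula. At $\rho=1/2$ both correction terms vanish and $G=\pi/4$, consistent with $p\in\partial D$.

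For monotonicity, we differentiate for $\rho>1/2$. We have $\frac{d}{d\rho}\frac12\sqrt{\rho^2-1/4}=\frac{\rho}{2\sqrt{\rho^2-1/4}}$. Also $\frac{d}{d\rho}\arccos(1/(2\rho))=\frac{1/(2\rho^2)}{\sqrt{1-1/(4\rho^2)}}=\frac{1}{2\rho\sqrt{\rho^2-1/4}}$. Hence
\[
G'(\rho)=\frac{\rho}{2\sqrt{\rho^2-1/4}}-\frac{1}{8\rho\sqrt{\rho^2-1/4}}
=\frac{4\rho^2-1}{8\rho\sqrt{\rho^2-1/4}}=\frac{\sqrt{\rho^2-1/4}}{2\rho}>0 .
\]
So $G$ is strictly increasing.

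As a cross-check on monotonicity, $\rho\mapsto\operatorname{conv}(D\cup\{p\})$ is nested along a ray for $\rho'>\rho$, since $p_\rho$ lies in the hull generated by $p_{\rho'}$ and $D$. The only step needing care is the justification of the decomposition, namely that the hull is exactly the kite union the far sector. That follows from the tangent-line description of the support function of $\operatorname{conv}(D\cup\{p\})$, namely $\max(1/2,\langle p,u\rangle)$: this equals $1/2$ precisely for normal directions outside the angular window $|\arg u|<\varphi$.
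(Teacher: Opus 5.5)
Your proof is correct and takes essentially the same approach as the paper. The paper writes the hull as the disk plus the two tangent right triangles minus the near sector of angle $2\arccos(1/(2\rho))$, which is counted twice, whereas you split it disjointly into the kite and the far sector; this is an equivalent bookkeeping that gives the identical formula, and your derivative $G'(\rho)=\sqrt{\rho^2-1/4}/(2\rho)>0$ matches the paper's computation.
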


\begin{proof}
For $\rho=1/2$, the point $p$ lies on the disk and
$G(1/2)=\operatorname{area}(D)=\pi/4$.

Assume $\rho>1/2$.  Its area can be computed from the area of the disk
and the two tangent triangles from $p$, with the circular sector between
the tangent points subtracted once.

Let $r=1/2$, and let \(t_+\) and \(t_-\) be the two tangent points on
\(\partial D\).  Each triangle with vertices \(0,p,t_\pm\) is a right
triangle with legs \(r\) and \(\sqrt{\rho^2-r^2}\), so the two triangles
together contribute
\[
2\cdot\frac12 r\sqrt{\rho^2-r^2}
=
r\sqrt{\rho^2-r^2}.
\]
This adds the two tangent triangles to the disk, but the circular sector
of the disk between \(t_+\) and \(t_-\) has then been counted twice: once
as part of the disk and once inside the two tangent triangles.  The angle
of this sector is
\(2\arccos(r/\rho)\), and its area is
\[
\frac12 r^2\cdot 2\arccos(r/\rho)
=
r^2\arccos(r/\rho).
\]
Thus
\[
G(\rho)
=
\pi r^2
+
r\sqrt{\rho^2-r^2}
-
r^2\arccos(r/\rho).
\]

Substituting $r=1/2$ gives the stated formula.  Differentiating gives
\[
G'(\rho)
=
\frac{\sqrt{4\rho^2-1}}{4\rho}
=
\frac{\sqrt{\rho^2-\frac14}}{2\rho}
>0
\qquad
(\rho>1/2).
\]
Therefore $G$ is strictly increasing on $(1/2,\infty)$.
\end{proof}

The significance of $G$ is the following.  If a normalized configuration
contains a point $p$ with $|p|=\rho$, then
\[
\operatorname{conv}(D\cup\{p\})
\subset
\operatorname{conv}
\bigl(D\cup R_3(z)\cup R_5(z)\bigr),
\]
and hence
\[
A(z)\ge G(\rho).
\]
Therefore, once $G(\rho)>0.834$, any normalized configuration
containing such a point is automatically safe.

\subsection{Far-field reduction for \texorpdfstring{$R_3$}{R3}}

Let $c_3=(x_3,y_3)$ be the center of the placed regular Reuleaux
triangle $R_3(z)$, and write
\[
r_3=\|c_3\|.
\]

The three vertices of the underlying equilateral triangle of $R_3$ lie
at distance
\[
R_3^{\rm vert}=\frac{1}{\sqrt3}
\]
from the center.  For any direction in the plane, one of the three
vertex directions makes angle at most $\pi/3$ with it.  Hence, if
$r_3>0$, there is a vertex $v$ of $R_3(z)$ whose distance from the
origin is at least
\[
d_3(r_3)
=
\sqrt{
r_3^2
+
\left(R_3^{\rm vert}\right)^2
+
2r_3R_3^{\rm vert}\cos(\pi/3)
}.
\]

Since $R_3^{\rm vert}=1/\sqrt3$, this becomes
\[
d_3(r)
=
\sqrt{
r^2+\frac13+\frac{r}{\sqrt3}
}.
\]

The function $d_3(r)$ is strictly increasing for $r\ge0$; indeed,
differentiating
\[
d(r)=\sqrt{r^2+R^2+2rR\cos\delta}
\]
gives
\[
d'(r)=\frac{r+R\cos\delta}{d(r)}>0
\]
in the present case \(R=R_3^{\rm vert}\) and \(\delta=\pi/3\).

We use the cutoff
\[
b_3=0.190700.
\]

All computations in this far-field step are performed by
outward-rounded interval arithmetic at 200-bit precision.  The decimal
cutoffs are interpreted as exact decimal interval inputs.  For
\(b_3=0.190700\), the outward-rounded interval evaluation gives
\[
G(d_3(b_3))-0.834
\in
[2.13944145452936,\,2.13944145452937]\times 10^{-7},
\]
which is strictly positive.

Thus, by monotonicity of both $d_3$ and $G$, if
\[
\|c_3\|\ge b_3,
\]
then $R_3(z)$ contains a vertex $p$ such that
\[
G(|p|)\ge G(d_3(b_3))>0.834.
\]
Consequently,
\[
A(z)>0.834.
\]

We have proved the following far-field exclusion.

\begin{lemma}\label{lem:far-r3}
If a normalized configuration satisfies
\[
A(z)<0.834,
\]
then
\[
\|c_3\|<b_3=0.190700.
\]
\end{lemma}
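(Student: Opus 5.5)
We prove the contrapositive: if \(\|c_3\|\ge b_3\), then \(A(z)>0.834\).

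First, locate a far vertex. The three vertices of \(R_3(z)\) are \(c_3+R_3^{\rm vert}e_k\), where \(e_0,e_1,e_2\) are unit vectors spaced by \(2\pi/3\). If \(c_3\neq0\), let \(\hat c=c_3/\|c_3\|\). Some \(e_k\) makes an angle \(\delta\le\pi/3\) with \(\hat c\), since the three directions cut the circle into arcs of length \(2\pi/3\). For that vertex \(v\),
\[
|v|^2=\|c_3\|^2+(R_3^{\rm vert})^2+2\|c_3\|R_3^{\rm vert}\cos\delta
\ge d_3(\|c_3\|)^2,
\]
because \(\cos\delta\ge\cos(\pi/3)=1/2\). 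Here \(d_3(r)\ge d_3(0)=1/\sqrt3>1/2\), so \(|v|\) lies in the range where Lemma~\ref{lem:disk-point} applies.

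Second, use monotonicity. The derivative formula gives \(d_3'(r)>0\) for \(r\ge0\), so \(\|c_3\|\ge b_3\) implies \(|v|\ge d_3(\|c_3\|)\ge d_3(b_3)\). By Lemma~\ref{lem:disk-point}, \(G\) is increasing on \([1/2,\infty)\); it is continuous at \(1/2\) and strictly increasing beyond. Therefore \(G(|v|)\ge G(d_3(b_3))\).

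Third, pass to the hull. The vertex \(v\) belongs to \(R_3(z)\), so
\[
\operatorname{conv}(D\cup\{v\})\subset\operatorname{conv}(D\cup R_3(z)\cup R_5(z)).
\]
Hence \(A(z)\ge G(|v|)\ge G(d_3(b_3))\).

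The only delicate step is checking \(G(d_3(b_3))>0.834\) rigorously. For this we rely on the recorded outward-rounded 200-bit interval evaluation. It encloses \(G(d_3(b_3))-0.834\) in an interval of positive numbers near \(2.14\times10^{-7}\), with \(b_3\) entered as an exact decimal. The evaluation must enclose correctly each operation in \(d_3\) and in the closed form for \(G\), namely the square roots, \(\arccos\), and the constant \(\pi\).

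The margin of about \(2\times10^{-7}\) is far larger than the enclosure width, so the conclusion is robust. It follows that \(A(z)>0.834\). Contrapositively, \(A(z)<0.834\) forces \(\|c_3\|<b_3\).
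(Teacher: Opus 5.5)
Your proof is correct and follows the paper's argument essentially step for step: a vertex within angle $\pi/3$ of the center direction, the law-of-cosines bound $|v|\ge d_3(\|c_3\|)$, monotonicity of $d_3$ and $G$, the inclusion $\operatorname{conv}(D\cup\{v\})\subset\operatorname{conv}(D\cup R_3(z)\cup R_5(z))$, and the outward-rounded interval check $G(d_3(b_3))>0.834$. You also note that $d_3(r)\ge 1/\sqrt3>1/2$, so the closed form for $G$ applies; the paper leaves this check implicit.
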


\subsection{Far-field reduction for \texorpdfstring{$R_5$}{R5}}

Let $c_5=(x_5,y_5)$ be the center of the placed regular Reuleaux
pentagon $R_5(z)$, and write
\[
r_5=\|c_5\|.
\]

The five vertices of the underlying regular pentagon of $R_5$ lie at
distance
\[
R_5^{\rm vert}
=
\frac{1}{2\cos(\pi/10)}
\]
from the center; this is the circumradius of a regular pentagon of
diameter \(1\).  For any direction in the plane, one of the five
vertex directions makes angle at most $\pi/5$ with it.  Therefore
$R_5(z)$ contains a vertex whose distance from the origin is at least
\[
d_5(r_5)
=
\sqrt{
r_5^2
+
\left(R_5^{\rm vert}\right)^2
+
2r_5R_5^{\rm vert}\cos(\pi/5)
}.
\]

We use the cutoff
\[
b_5=0.194601.
\]

Again, $d_5(r)$ is strictly increasing for $r\ge0$, by the same
derivative formula with \(R=R_5^{\rm vert}\) and \(\delta=\pi/5\).  The
outward-rounded interval verification gives
\[
G(d_5(b_5))-0.834
\in
[1.46438884041707,\,1.46438884041708]\times 10^{-7},
\]
which is strictly positive.

Thus, if
\[
\|c_5\|\ge b_5,
\]
then the full convex hull contains the convex hull of $D$ and a point
$p\in R_5(z)$ with
\[
|p|\ge d_5(b_5),
\]
and hence
\[
A(z)\ge G(d_5(b_5))>0.834.
\]

This proves the second far-field exclusion.

\begin{lemma}\label{lem:far-r5}
If a normalized configuration satisfies
\[
A(z)<0.834,
\]
then
\[
\|c_5\|<b_5=0.194601.
\]
\end{lemma}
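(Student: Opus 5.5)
The plan is to prove the contrapositive, in the same way as Lemma~\ref{lem:far-r3}. Assume $\|c_5\|\ge b_5$. We exhibit a single point of $R_5(z)$ far from the origin, and then use the disk-plus-point bound $A(z)\ge G(|p|)$ from Lemma~\ref{lem:disk-point}.

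\textbf{Step 1 (a far vertex).} Write the five vertices of the underlying regular pentagon of $R_5(z)$ as $c_5+R_5^{\rm vert}u_k$. Here the $u_k$ are unit vectors with consecutive angular spacing $2\pi/5$, rotated by $\psi_5$. Since the spacing is $2\pi/5$, some $u_k$ makes an angle $\delta\le\pi/5$ with $c_5/\|c_5\|$; this holds whatever the value of $\psi_5$. For that vertex $v$, the law of cosines gives
\[
|v|^2=r_5^2+(R_5^{\rm vert})^2+2r_5R_5^{\rm vert}\cos\delta\ge d_5(r_5)^2,
\]
because $\cos\delta\ge\cos(\pi/5)$. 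The vertices of the polygon are boundary points of the Reuleaux pentagon, so $v\in R_5(z)$.

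\textbf{Step 2 (monotonicity).} We have
\[
d_5'(r)=\frac{r+R_5^{\rm vert}\cos(\pi/5)}{d_5(r)}>0 \quad\text{for } r\ge0,
\]
so $d_5$ is increasing and $|v|\ge d_5(r_5)\ge d_5(b_5)$. Moreover $d_5(b_5)>R_5^{\rm vert}>1/2$. Hence the strict monotonicity of $G$ on $(1/2,\infty)$ from Lemma~\ref{lem:disk-point} applies, and gives $G(|v|)\ge G(d_5(b_5))$.

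\textbf{Step 3 (numerics and conclusion).} Since $D\cup\{v\}$ lies in the convex hull of the configuration, $A(z)\ge G(|v|)\ge G(d_5(b_5))$. The remaining task is the strict inequality $G(d_5(b_5))>0.834$. This is the only real obstacle, because the margin is only about $1.5\cdot10^{-7}$, so it cannot be settled by a casual floating-point evaluation. I would evaluate $R_5^{\rm vert}=1/(2\cos(\pi/10))$, $\cos(\pi/5)$, $d_5(b_5)$, and then $G$ via its closed form with $\sqrt{\cdot}$ and $\arccos$, all in outward-rounded interval arithmetic at high precision, taking $b_5=0.194601$ as an exact decimal input. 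The check is that the resulting enclosure of $G(d_5(b_5))-0.834$ lies strictly in $(0,\infty)$, as in the recorded interval $[1.464\ldots,1.464\ldots]\times10^{-7}$.

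With this, $\|c_5\|\ge b_5$ forces $A(z)>0.834$, which is the contrapositive of the lemma.
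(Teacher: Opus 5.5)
Your proposal is correct and follows the paper's argument essentially step for step. A vertex of $R_5(z)$ within angle $\pi/5$ of the direction of $c_5$ lies at distance at least $d_5(r_5)$ from the origin; monotonicity of $d_5$ and $G$, together with the outward-rounded interval check $G(d_5(b_5))>0.834$, then gives $A(z)>0.834$ whenever $\|c_5\|\ge b_5$. Your explicit remark that $d_5(b_5)\ge R_5^{\rm vert}>1/2$, so that $G$ is defined and its monotonicity applies, is a small detail the paper leaves implicit.
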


\subsection{The coarse bounded domain}

Combining Lemmas~\ref{lem:far-r3} and~\ref{lem:far-r5}, any
normalized configuration \(z\in\Omega_{\rm full}\) with area below
$0.834$ must satisfy
\[
\|c_3\|<0.190700,
\qquad
\|c_5\|<0.194601.
\]

Therefore every possible normalized counterexample to
\[
A(z)\ge0.834
\]
lies in the coarse bounded domain
\[
\Omega_{\rm coarse}^{\rm ball}
=
\left\{
(x_3,y_3,\psi_5,x_5,y_5):
\begin{array}{l}
\|(x_3,y_3)\|<0.190700,\\
0\le\psi_5\le \pi/5,\\
\|(x_5,y_5)\|<0.194601
\end{array}
\right\}.
\]

For interval subdivision it is more convenient to work with a
rectangular box containing this ball domain.  Introduce
\[
\Omega_{\rm coarse}
=
[-0.190700,0.190700]^2
\times
\left[0,\frac{\pi}{5}\right]
\times
[-0.194601,0.194601]^2.
\]

The analytic far-field reduction can then be stated as follows.

\begin{proposition}\label{prop:far-field}
For \(z\in\Omega_{\rm full}\), if
\[
A(z)<0.834,
\]
then
\[
z\in\Omega_{\rm coarse}.
\]
Equivalently, every normalized configuration
\(z\in\Omega_{\rm full}\setminus\Omega_{\rm coarse}\) is safe.
\end{proposition}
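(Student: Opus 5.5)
The plan is to argue by contraposition and combine Lemma~\ref{lem:far-r3} and Lemma~\ref{lem:far-r5} with the elementary inclusion of a Euclidean ball in its circumscribing square. Fix \(z=(x_3,y_3,\psi_5,x_5,y_5)\in\Omega_{\rm full}\) with \(A(z)<0.834\). By Proposition~\ref{prop:normalization}, \(z\) is an admissible normalized configuration. Hence \(\psi_5\in[0,\pi/5]\) holds automatically, since this constraint is already built into \(\Omega_{\rm full}\). Only the four translation coordinates need to be controlled.

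First, Lemma~\ref{lem:far-r3} gives \(\|(x_3,y_3)\|<b_3=0.190700\). Since \(|x_3|\le\|(x_3,y_3)\|\) and \(|y_3|\le\|(x_3,y_3)\|\), this yields \((x_3,y_3)\in[-0.190700,0.190700]^2\). In the same way, Lemma~\ref{lem:far-r5} gives \(\|(x_5,y_5)\|<b_5=0.194601\), so \((x_5,y_5)\in[-0.194601,0.194601]^2\). Together these place \(z\) in \(\Omega_{\rm coarse}^{\rm ball}\subset\Omega_{\rm coarse}\).

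The equivalent formulation is just the contrapositive. If \(z\in\Omega_{\rm full}\setminus\Omega_{\rm coarse}\), then \(A(z)\ge0.834\), which is what ``safe'' means here. In fact strict inequality \(A(z)>0.834\) holds whenever one of the center norms is at least its cutoff, by the strict positivity of \(G(d_3(b_3))-0.834\) and \(G(d_5(b_5))-0.834\).

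I do not expect a genuine obstacle, since all the analytic content lies in the two far-field lemmas. The only point to check carefully is that those lemmas apply to every \(z\in\Omega_{\rm full}\) and not merely to some subclass. Their proofs use just two facts: some vertex of the placed body lies at distance at least \(d_k(r_k)\) from the origin, and \(A(z)\ge G(|p|)\) for every point \(p\) of the hull. Both facts are independent of \(\psi_5\), and the first is rotation-invariant for \(R_5\), so the lemmas hold uniformly on \(\Omega_{\rm full}\).
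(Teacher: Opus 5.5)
Your proposal is correct and follows the paper's proof essentially line by line. Both invoke Lemmas~\ref{lem:far-r3} and~\ref{lem:far-r5} to get $\|c_3\|<b_3$ and $\|c_5\|<b_5$, pass to the coordinate bounds $|x_3|,|y_3|<b_3$ and $|x_5|,|y_5|<b_5$, note that $0\le\psi_5\le\pi/5$ is built into $\Omega_{\rm full}$, and read the ``safe'' formulation as the contrapositive (strict in the exterior); your added check that the far-field lemmas hold uniformly in $\psi_5$ is correct and harmless.
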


\begin{proof}
By Lemmas~\ref{lem:far-r3} and~\ref{lem:far-r5}, any normalized
configuration \(z\in\Omega_{\rm full}\) with $A(z)<0.834$ satisfies
\[
\|c_3\|<b_3,
\qquad
\|c_5\|<b_5.
\]
Hence
\[
|x_3|<b_3,\quad |y_3|<b_3,
\qquad
|x_5|<b_5,\quad |y_5|<b_5.
\]
Together with the angular normalization
$0\le\psi_5\le\pi/5$, this implies
\[
z\in
[-b_3,b_3]^2\times[0,\pi/5]\times[-b_5,b_5]^2
=
\Omega_{\rm coarse}.
\]
Thus any normalized configuration
\(z\in\Omega_{\rm full}\setminus\Omega_{\rm coarse}\)
satisfies \(A(z)\ge0.834\); in fact the far-field estimates give the
strict inequality in the relevant exterior regions.
\end{proof}

The reduction in this section removes the unbounded translation
variables.  The remaining coarse domain is still much larger than the
final core box used for the five-dimensional certificate.  The next
section removes the square collar regions inside \(\Omega_{\rm coarse}\)
in which
\[
\|c_3\|_\infty\ge 0.08
\quad\text{or}\quad
\|c_5\|_\infty\ge 0.08
\]
by one-body collar certificates, leaving only
\[
\Omega_{\rm core}
=
[-0.08,0.08]^2
\times
\left[0,\frac{\pi}{5}\right]
\times
[-0.08,0.08]^2.
\]

\section{Collar Certificates and Reduction to the Core Box}
\label{sec:collar}

Section~\ref{sec:analytic-reduction} reduced the unbounded
five-dimensional parameter space to the coarse bounded domain
\[
\Omega_{\rm coarse}
=
[-b_3,b_3]^2
\times
\left[0,\frac{\pi}{5}\right]
\times
[-b_5,b_5]^2,
\]
where
\[
b_3=0.190700,
\qquad
b_5=0.194601.
\]

Thus every possible normalized counterexample to
\[
A(z)\ge 0.834
\]
must lie in $\Omega_{\rm coarse}$.  The next reduction takes this coarse domain
further to a much smaller core box:
\[
\Omega_{\rm core}
=
[-0.08,0.08]^2
\times
\left[0,\frac{\pi}{5}\right]
\times
[-0.08,0.08]^2.
\]

The reduction from $\Omega_{\rm coarse}$ to $\Omega_{\rm core}$ is
performed by one-body collar certificates.  Intuitively, if either
$R_3$ or $R_5$ is outside the small central square but not already in
the far field, then the convex hull of the disk with that single
translated body already has area at least $0.834$.  Adding the other
body can only increase the convex hull area.  Hence only the core box
can contain potentially dangerous configurations.

\subsection{The collar regions}

Write
\[
c_3=(x_3,y_3),
\qquad
c_5=(x_5,y_5).
\]

Let
\[
a_0=0.08.
\]
This cutoff is a certificate-design parameter, not a special geometric
constant.  It is chosen strictly below the far-field cutoffs \(b_3\)
and \(b_5\), so that the annular square collars between \(a_0\) and
the far-field bounds can be certified by one-body interval lower bounds.
The choice is justified by the certificate itself: the collar
verification below proves that every normalized configuration in
\(\Omega_{\rm coarse}\setminus\Omega_{\rm core}\) is already safe.  A
different cutoff would be equally legitimate if the corresponding
collar certificate closed.

The core box is the set of normalized configurations satisfying
\[
\|c_3\|_\infty\le a_0,
\qquad
\|c_5\|_\infty\le a_0,
\qquad
0\le\psi_5\le\frac{\pi}{5}.
\]

In product form,
\[
\Omega_{\rm core}
=
[-a_0,a_0]^2
\times
\left[0,\frac{\pi}{5}\right]
\times
[-a_0,a_0]^2.
\]

If $z\in\Omega_{\rm coarse}\setminus\Omega_{\rm core}$, then either
\[
\|c_3\|_\infty\ge a_0
\qquad\text{or}\qquad
\|c_5\|_\infty\ge a_0.
\]
In particular, either $\|c_3\|\ge a_0$ or $\|c_5\|\ge a_0$.
The cases $\|c_3\|\ge b_3$ and $\|c_5\|\ge b_5$ have already been
removed by the far-field estimates of Section~\ref{sec:analytic-reduction}.
Thus the only remaining collar cases are the one-body radial domains
\[
C_3^{\rm collar}
=
[a_0,b_3]\times\left[0,\frac{\pi}{3}\right],
\qquad
C_5^{\rm collar}
=
[a_0,b_5]\times\left[0,\frac{\pi}{5}\right].
\]
The angular intervals are fundamental intervals for the regular
triangle and regular pentagon symmetries.  More explicitly, consider a
one-body hull
\[
\operatorname{conv}\bigl(D\cup(c+Q R_n^0)\bigr),
\]
where \(Q\) is an orthogonal placement of the reference regular
Reuleaux \(n\)-gon.  Since \(D\) is the centered disk, applying
\(Q^{-1}\) to the whole one-body configuration preserves the area and
reduces the body to \(R_n^0\), while changing only the direction of the
center.  The dihedral symmetry of \(R_n^0\) then identifies center
directions modulo reflection and rotation by \(2\pi/n\).  Hence it is
enough to certify the fundamental angle interval \([0,\pi/n]\), giving
\([0,\pi/3]\) for \(R_3\) and \([0,\pi/5]\) for \(R_5\).

The public collar certificates verify these two domains.  They prove
that a translated regular Reuleaux triangle or pentagon in the
corresponding radial collar already gives a disk-plus-body convex hull
of area at least $0.834$.

\subsection{One-body collar lower bounds}

The collar certificates use the following general principle.  Suppose
that a translated regular Reuleaux body $R_n(r,\theta)$ has center
with polar coordinates $(r,\theta)$, where $(r,\theta)$ lies in the
corresponding one-body collar domain $C_n^{\rm collar}$.  Then at
least one support point or short support arc of $R_n(r,\theta)$ is
sufficiently exposed relative to the disk $D$.  The convex hull of
$D$ with this exposed subset already gives a lower bound exceeding
$0.834$.

The certificate is organized as an interval statement.  For a
parameter box $B\subset C_n^{\rm collar}$, one constructs a finite set
of interval-enclosed witness points
\[
p_1(z),\ldots,p_m(z)
\in
D\cup R_n(z),
\qquad z\in B,
\]
where membership is certified by construction: each witness is either a
fixed disk sample or a fixed sample of the reference Reuleaux body,
translated by the interval center, and the fixed sample membership is
checked by outward-rounded inequalities.  The resulting polygon
\[
P_B(z)
=
\operatorname{conv}
\{p_1(z),\ldots,p_m(z)\}
\]
is contained in the corresponding disk-plus-body convex hull
\[
P_B(z)
\subseteq
\operatorname{conv}
\bigl(D\cup R_n(z)\bigr).
\]

Consequently,
\[
\operatorname{area}\operatorname{conv}\bigl(D\cup R_n(z)\bigr)
\ge
\operatorname{area}(P_B(z)).
\]

The local collar certificate verifies, by interval arithmetic, that
\[
\operatorname{area}(P_B(z))\ge0.834
\qquad
\text{for every }z\in B.
\]

The archived interval verifier checks both parts of this assertion: the recorded
collar boxes form an exact endpoint partition of the public collar
frontiers, and every leaf recomputes the local lower bound by the
approved one-body inner-polygon method.  The
union of the accepted collar boxes then proves the corresponding collar
exclusion.

\subsection{The \texorpdfstring{$R_3$}{R3}-collar certificate}

The $R_3$-collar certificate consists of a finite box decomposition of
\[
C_3^{\rm collar}
=
[0.08,0.190700]\times\left[0,\frac{\pi}{3}\right]
\]
such that every box $B_\alpha^{(3)}$ is certified safe:
\[
\inf_{(r,\theta)\in B_\alpha^{(3)}}
\operatorname{area}
\operatorname{conv}
\bigl(D\cup R_3(r,\theta)\bigr)
\ge0.834.
\]

Each local certificate uses only interval-enclosed geometric data from
the disk $D$ and the translated regular Reuleaux triangle
$R_3(r,\theta)$.  Thus the certificate proves
\[
\operatorname{area}
\operatorname{conv}
\bigl(D\cup R_3(r,\theta)\bigr)
\ge0.834
\qquad
\text{for all }(r,\theta)\in C_3^{\rm collar}.
\]

\subsection{The \texorpdfstring{$R_5$}{R5}-collar certificate}

The $R_5$-collar certificate is a finite box decomposition of
\[
C_5^{\rm collar}
=
[0.08,0.194601]\times\left[0,\frac{\pi}{5}\right]
\]
with a certified lower bound
\[
\inf_{(r,\theta)\in B_\beta^{(5)}}
\operatorname{area}
\operatorname{conv}
\bigl(D\cup R_5(r,\theta)\bigr)
\ge0.834
\]
on every box.  Hence
\[
\operatorname{area}
\operatorname{conv}
\bigl(D\cup R_5(r,\theta)\bigr)
\ge0.834
\qquad
\text{for all }(r,\theta)\in C_5^{\rm collar}.
\]

\subsection{Reduction to the core box}

Combining the far-field estimates with the two one-body collar
certificates gives the desired domain reduction.

\begin{proposition}\label{prop:collar}
Assume that the one-body collar certificates verify
\[
\operatorname{area}
\operatorname{conv}
\bigl(D\cup R_3(r,\theta)\bigr)
\ge0.834
\qquad
\text{on }C_3^{\rm collar}
\]
and
\[
\operatorname{area}
\operatorname{conv}
\bigl(D\cup R_5(r,\theta)\bigr)
\ge0.834
\qquad
\text{on }C_5^{\rm collar}.
\]
Then every normalized configuration $z\in\Omega_{\rm coarse}$ with
$A(z)<0.834$ must lie in $\Omega_{\rm core}$.
\end{proposition}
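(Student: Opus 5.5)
We argue by contraposition: take $z\in\Omega_{\rm coarse}\setminus\Omega_{\rm core}$ and show $A(z)\ge0.834$. The argument uses three facts. The first is monotonicity of area under inclusion. The second is the one-body collar certificates, transported to an arbitrary center direction by symmetry. The third is the far-field Lemmas~\ref{lem:far-r3} and~\ref{lem:far-r5}.

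\textbf{Step 1: reduction to one body.} If $z\notin\Omega_{\rm core}$, then $\|c_3\|_\infty> a_0$ or $\|c_5\|_\infty> a_0$, say for the index $n\in\{3,5\}$; on the boundary of the core box there is nothing to prove. Since $\|c_n\|\ge\|c_n\|_\infty$, we get $\|c_n\|> a_0$. The inclusion
\[
\operatorname{conv}\bigl(D\cup R_n(z)\bigr)\subset\operatorname{conv}\bigl(D\cup R_3(z)\cup R_5(z)\bigr)
\]
gives $A(z)\ge \operatorname{area}\operatorname{conv}(D\cup R_n(z))$. So it suffices to bound this one-body hull from below.

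\textbf{Step 2: case split on the radius.} If $\|c_n\|\ge b_n$, the far-field argument of Section~\ref{sec:analytic-reduction} gives $A(z)>0.834$ directly. That argument exhibits a vertex $p$ of $R_n(z)$ with $|p|\ge d_n(b_n)$, and then $A(z)\ge G(|p|)$ by monotonicity of $d_n$ and $G$. Otherwise $a_0\le \|c_n\|<b_n$, which puts the polar radius $r=\|c_n\|$ in $[a_0,b_n]$.

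\textbf{Step 3: reduction of the angle to the certified fundamental interval.} Write $R_n(z)=c_n+QR_n^0$, where $Q$ is the identity for $n=3$ and $Q=Q_{\psi_5}$ for $n=5$. Apply $Q^{-1}$ to the whole one-body configuration. Because $D$ is the centered disk, this fixes $D$, preserves area, and turns the body into $Q^{-1}c_n+R_n^0$ with the same radius $r$.

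Next use the dihedral symmetry group of $R_n^0$. Rotations by $2\pi k/n$ and the reflections through its axes also fix $D$ and preserve area. They move the center direction into the fundamental sector $[0,\pi/n]$ relative to the fixed reference orientation. This requires that the collar angle coordinate $\theta$ is measured from a symmetry axis of $R_n^0$; I will check that this matches the certificate's convention for $R_n(r,\theta)$.

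After this normalization $(r,\theta)\in C_n^{\rm collar}$. The assumed collar certificate then gives $\operatorname{area}\operatorname{conv}(D\cup R_n(r,\theta))\ge0.834$, and hence $A(z)\ge0.834$. This contradicts $A(z)<0.834$, so $z\in\Omega_{\rm core}$.

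\textbf{Main obstacle.} The delicate point is the bookkeeping in Step 3. It must be verified that the symmetry group of $R_n^0$, combined with the orthogonal invariance of $D$, maps every center direction into exactly the sector $[0,\pi/n]$ as parametrized in the certificate. In particular, the reflection must be about an axis of $R_n^0$ through the origin, and the $\theta=0$ direction must lie on such an axis. The radial endpoints are the other point to watch. The collar intervals are closed at $b_n$ while the far-field case handles $\|c_n\|\ge b_n$, so the cases overlap consistently and no gap arises.
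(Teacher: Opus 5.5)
Your proposal is correct and follows essentially the same route as the paper: contrapose, pass to the one-body hull by inclusion, dispose of $\|c_n\|\ge b_n$ by the far-field lemmas, and otherwise apply $Q^{-1}$ and the dihedral symmetry of $R_n^0$ to land in $C_n^{\rm collar}$. The convention you flag, that $\theta=0$ lies on a symmetry axis of $R_n^0$, is also taken for granted without comment in the paper's own symmetry reduction.
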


\begin{proof}
Let $z\in\Omega_{\rm coarse}\setminus\Omega_{\rm core}$.  Then either
$\|c_3\|_\infty\ge a_0$ or $\|c_5\|_\infty\ge a_0$.

If $\|c_3\|_\infty\ge a_0$, then $r_3=\|c_3\|\ge a_0$.  If
$r_3\ge b_3$, the far-field reduction gives $A(z)>0.834$.  Otherwise
$r_3\in[a_0,b_3]$.  By the one-body symmetry reduction above, applying
a dihedral symmetry of the regular Reuleaux triangle moves the center
direction into the fundamental interval without changing
\(\operatorname{area}\operatorname{conv}(D\cup R_3(z))\).  The
configuration is represented by some
\((r_3,\theta_3)\in C_3^{\rm collar}\), and the $R_3$ one-body collar
certificate gives
\[
\operatorname{area}
\operatorname{conv}
\bigl(D\cup R_3(z)\bigr)
\ge0.834.
\]
Since this hull is contained in
$\operatorname{conv}(D\cup R_3(z)\cup R_5(z))$, we get
$A(z)\ge0.834$.

If $\|c_5\|_\infty\ge a_0$, the same argument applies.  If
$r_5\ge b_5$, the far-field reduction gives $A(z)>0.834$.  Otherwise
\(r_5\in[a_0,b_5]\).  First rotate the one-body configuration so that
the placed Reuleaux pentagon has the reference orientation; this fixes
the disk and preserves the one-body hull area.  Then use the dihedral
symmetry of \(R_5^0\) to move the resulting center direction into
\([0,\pi/5]\).  The $R_5$ one-body collar certificate on
\(C_5^{\rm collar}\) gives
\[
\operatorname{area}
\operatorname{conv}
\bigl(D\cup R_5(z)\bigr)
\ge0.834,
\]
and hence again \(A(z)\ge0.834\).  Therefore no point in
\(\Omega_{\rm coarse}\setminus\Omega_{\rm core}\) can satisfy
\(A(z)<0.834\), so any such configuration must lie in \(\Omega_{\rm core}\).
\end{proof}

Together with Proposition~\ref{prop:far-field},
Proposition~\ref{prop:collar} gives the complete domain reduction.
The far-field estimates exclude
\(\Omega_{\rm full}\setminus\Omega_{\rm coarse}\), the collar
certificates exclude
\(\Omega_{\rm coarse}\setminus\Omega_{\rm core}\), and the only
remaining region is the compact core box.  Hence
\[
\inf_{z\in\Omega_{\rm full}} A(z)\ge0.834
\]
will follow once we prove
\[
\inf_{z\in\Omega_{\rm core}} A(z)\ge0.834.
\]

It remains to certify this core five-dimensional domain by interval
methods.

\section{The Core5D Interval Certificate}
\label{sec:core5d}

Sections~\ref{sec:analytic-reduction} and~\ref{sec:collar} reduced the
global finite-dimensional problem to the core box
\[
\Omega_{\rm core}
=
[-0.08,0.08]^2
\times
\left[0,\frac{\pi}{5}\right]
\times
[-0.08,0.08]^2.
\]

Thus, to prove the certified lower bound, it remains to verify
\[
A(z)\ge 0.834
\qquad
\text{for every }z\in\Omega_{\rm core},
\]
where
\[
A(z)
=
\operatorname{area}
\operatorname{conv}
\bigl(
D\cup R_3(z)\cup R_5(z)
\bigr).
\]

The Core5D interval certificate consists of a finite cover of the core
domain by boxes, together with local lower-bound proofs on those boxes.
The local geometric methods are proved in Section~\ref{sec:local-methods}.

\subsection{The core parameter domain}

The core parameter vector is
\[
z=(x_3,y_3,\psi_5,x_5,y_5),
\]
with
\[
(x_3,y_3)\in[-0.08,0.08]^2,
\]
\[
\psi_5\in\left[0,\frac{\pi}{5}\right],
\]
and
\[
(x_5,y_5)\in[-0.08,0.08]^2.
\]

Thus
\[
\Omega_{\rm core}
=
[-0.08,0.08]
\times[-0.08,0.08]
\times
\left[0,\frac{\pi}{5}\right]
\times
[-0.08,0.08]
\times[-0.08,0.08].
\]

For a parameter box
\[
B
=
I_{x_3}\times I_{y_3}\times I_{\psi_5}\times I_{x_5}\times I_{y_5}
\subseteq \Omega_{\rm core},
\]
the local certification problem is to prove
\[
\inf_{z\in B}A(z)\ge 0.834.
\]

If this is done for a finite family of boxes whose union covers
\(\Omega_{\rm core}\), then the desired global inequality follows.

\subsection{Box decomposition and certificate leaves}

The interval proof covers $\Omega_{\rm core}$ by finitely many boxes.
Conceptually, the certificate consists of a finite collection
of leaf boxes
\[
\mathcal B
=
\{B_\alpha\}_{\alpha\in I},
\]
such that
\[
\Omega_{\rm core}
\subseteq
\bigcup_{\alpha\in I}B_\alpha,
\]
and each box carries a local lower-bound certificate:
\[
\operatorname{LB}(B_\alpha)\ge0.834,
\]
where
\[
\operatorname{LB}(B_\alpha)
\le
\inf_{z\in B_\alpha}A(z)
\]
is a rigorously certified lower bound.

The subdivision may be adaptive.  Boxes in geometrically simple
regions can be large, while boxes near the possible minimum of $A$ may
need to be much smaller.  This adaptivity affects only the efficiency
of the proof, not its logical form.  The final certificate is finite
and can be checked independently.

\subsection{Certificate schema}
\label{sec:certificate-schema}

Each core leaf record contains the information needed for an independent
check of its local proof obligation.  At the mathematical level, a
leaf record is
of the form
\[
\mathsf{Leaf}(B,\mathsf{method},\mathsf{params}),
\]
where $B$ is a rational or outward-rounded interval box,
$\mathsf{method}$ is an approved local-method label, and
\(\mathsf{params}\) contains finite witness data such as the sampling
resolution and arithmetic precision.  The interval verifier treats this
record as a proof obligation.  It reconstructs all interval variables
from \(B\), checks that the method label belongs to the approved list,
and then recomputes a valid local certificate from \(B\) and
\(\mathsf{params}\).  A leaf is accepted only if one of the approved
methods produces a rigorous interval lower bound satisfying
\[
\operatorname{LB}(B)>0.834.
\]
If no approved method produces such a strict interval lower bound, the
leaf is rejected and the certificate fails.  Thus a successful certificate
has no uncertified leaf box.
The collar certificates of Section~\ref{sec:collar} are simpler: their
only approved method is
\[
\text{\texttt{PASS\_ONEBODY\_INNER\_POLYGON}}.
\]
In all cases, the
certificate depends on interval-certified local inequalities, not on a
floating-point search path or a stored numerical lower-bound claim.

The root record specifies the domain
\[
\Omega_{\rm core}
=
[-0.08,0.08]^2\times[0,\pi/5]\times[-0.08,0.08]^2.
\]
The interval verifier checks that the accepted leaf boxes form an exact
subdivision of the root box, with no gaps and only prescribed shared
faces.  This verifies both local safety and global coverage.

\subsection{Local lower-bound requirement}

For each leaf box $B$, the certificate must prove
\[
A(z)\ge0.834
\qquad
\text{for all }z\in B.
\]

Since
\[
A(z)
=
\operatorname{area}
\operatorname{conv}
\bigl(
D\cup R_3(z)\cup R_5(z)
\bigr),
\]
a lower bound for $A(z)$ may be obtained by constructing a polygonal
subset of the convex hull.

More precisely, suppose that for every $z\in B$ one can identify an
ordered polygon \(P_B(z)=[p_1(z),\ldots,p_m(z)]\) whose vertices satisfy
\[
p_1(z),\ldots,p_m(z)
\in
D\cup R_3(z)\cup R_5(z)
\]
and whose filled region is certified to lie inside the convex hull of
these vertices.  Then
\[
P_B(z)
\subseteq
\operatorname{conv}\{p_1(z),\ldots,p_m(z)\}
\subseteq
\operatorname{conv}
\bigl(
D\cup R_3(z)\cup R_5(z)
\bigr).
\]
In particular,
\[
A(z)\ge\operatorname{area}(P_B(z)).
\]
Thus it is sufficient to prove
\[
\operatorname{area}(P_B(z))\ge0.834
\qquad
\text{for all }z\in B.
\]

This formulation covers the different local methods used below.  In
the inner-polygon and template-convex methods, the witness polygon is
certified through convex or convex-template structure.  In the
mean-value simple-polygon method, the ordered polygon is certified to be
simple and positively oriented, and the filled polygonal region is then
used as the inner area witness.

The local methods in Section~\ref{sec:local-methods} differ in how the
points $p_i(z)$, the polygon $P_B(z)$, and its area lower bound are
certified.  In every case, the proof must be interval-valid on the
whole box $B$, not merely at its center.

\subsection{Frontier and leaf consistency}

A certificate is not complete merely because each leaf box is locally
safe.  It must also prove that the leaf boxes cover the entire core
domain without gaps.

The interval certification checks the following conditions:
\begin{enumerate}
\item \textbf{Domain containment.}
  Every leaf box satisfies $B_\alpha\subseteq\Omega_{\rm core}$.

\item \textbf{Coverage.}
  The union of all leaf boxes covers the core domain:
  $\Omega_{\rm core}\subseteq \bigcup_{\alpha\in I}B_\alpha$.

\item \textbf{Coverage and face consistency.}
  The accepted leaf boxes form a complete subdivision of the prescribed
  domain: adjacent boxes meet only along prescribed shared faces, and
  there are no positive-volume gaps or overlaps.

\item \textbf{Local safety.}
  For every leaf box $B_\alpha$, the verifier recomputes a rigorous lower
  bound
  $\operatorname{LB}(B_\alpha)\le\inf_{z\in B_\alpha}A(z)$
  and verifies $\operatorname{LB}(B_\alpha)\ge0.834$.
\end{enumerate}

When these checks pass, the certificate proves
\[
A(z)\ge0.834
\qquad
\text{for every }z\in\Omega_{\rm core}.
\]

\subsection{Interval arithmetic and outward rounding}

All local lower bounds are evaluated using interval arithmetic.  Each
coordinate interval in a box $B$ is treated as an interval variable.
Every operation that enters the lower-bound computation is performed
with outward rounding, so that the resulting interval encloses all
possible values over the whole box.  This is the standard validated
numerics paradigm for interval enclosures
~\cite{Moore1966,Neumaier1990,MooreKearfottCloud2009,Tucker2011}.

This is crucial because the certificate must rule out all
configurations in $B$, not just sampled points.  For example, if
$F(z)$ is a polygonal area expression used as a lower-bound witness,
interval arithmetic produces an interval $F(B)$ such that
\[
F(z)\in F(B)
\qquad
\text{for all }z\in B.
\]

If the lower endpoint of the interval \(F(B)\) is at least \(0.834\), then
$F(z)\ge0.834$ for every $z\in B$.  Since $A(z)\ge F(z)$, this proves
the safety of the box.

The details of the different choices of $F$ are given in
Section~\ref{sec:local-methods}.

\subsection{Certificate theorem for the core box}

The preceding discussion can be summarized as the following
certificate theorem.

\begin{proposition}\label{prop:core-cert}
Suppose that the core domain $\Omega_{\rm core}$ is covered by
finitely many parameter boxes
\[
\Omega_{\rm core}
\subseteq
\bigcup_{\alpha\in I}B_\alpha,
\]
and that for every leaf box $B_\alpha$ there is an interval-certified
lower bound
\[
\operatorname{LB}(B_\alpha)
\le
\inf_{z\in B_\alpha}A(z)
\]
satisfying
\[
\operatorname{LB}(B_\alpha)\ge0.834.
\]
Then
\[
A(z)\ge0.834
\qquad
\text{for every }z\in\Omega_{\rm core}.
\]
\end{proposition}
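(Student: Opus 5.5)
The statement is a pure covering-and-infimum argument, so the plan is to prove it pointwise and not attempt any global estimate. I will fix an arbitrary $z\in\Omega_{\rm core}$. By the covering hypothesis $\Omega_{\rm core}\subseteq\bigcup_{\alpha\in I}B_\alpha$, I choose an index $\alpha\in I$ with $z\in B_\alpha$. If $z$ lies on a shared face, any of the boxes containing it will do.

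Next I chain the two hypotheses on that leaf. The first is the defining property of the certified bound, $\operatorname{LB}(B_\alpha)\le\inf_{w\in B_\alpha}A(w)$. Since $z\in B_\alpha$, the infimum over $B_\alpha$ is at most $A(z)$. Combining these with the second hypothesis, $\operatorname{LB}(B_\alpha)\ge0.834$, gives
\[
0.834\le \operatorname{LB}(B_\alpha)\le \inf_{w\in B_\alpha}A(w)\le A(z).
\]
Because $z$ was arbitrary, this proves $A(z)\ge0.834$ on all of $\Omega_{\rm core}$. Finiteness of $I$ is not actually needed for the logic. It matters only so that the verifier can check the hypotheses, and that is also why I will not invoke compactness.

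The one point I would make explicit is that nothing is lost at the boundary. The leaf boxes are closed, and their union contains $\Omega_{\rm core}$ by the coverage hypothesis, so no point of the core domain, including points on faces, is left uncovered. I would also note that the meaning of ``interval-certified'' (the outward-rounded enclosure $F(z)\in F(B)$ together with $A\ge F$ from the inner-polygon inclusion) is what justifies $\operatorname{LB}(B_\alpha)\le\inf_{B_\alpha}A$. That justification is the local content of Section~\ref{sec:local-methods}, but for this proposition it is assumed as a hypothesis. I do not expect any real obstacle: all the difficulty sits in establishing the hypotheses, which are the box coverage and the leaf lower bounds, not in this deduction.
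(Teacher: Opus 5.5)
Your proof is correct and is the paper's argument: fix $z\in\Omega_{\rm core}$, use the coverage hypothesis to pick a leaf $B_\alpha\ni z$, and chain $A(z)\ge\inf_{w\in B_\alpha}A(w)\ge\operatorname{LB}(B_\alpha)\ge0.834$. Your side remarks are also right: finiteness of $I$ plays no logical role, and closedness of the boxes is not needed either, because the coverage hypothesis by itself places every point of $\Omega_{\rm core}$ in some leaf.
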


\begin{proof}
Let $z\in\Omega_{\rm core}$.  By the coverage condition, there exists
$\alpha\in I$ such that $z\in B_\alpha$.  By the local certificate on
$B_\alpha$,
\[
A(z)\ge \inf_{w\in B_\alpha}A(w)\ge \operatorname{LB}(B_\alpha)\ge0.834.
\]
Since $z\in\Omega_{\rm core}$ was arbitrary, the claim follows.
\end{proof}

Combining Proposition~\ref{prop:core-cert} with the reductions in
Sections~\ref{sec:analytic-reduction} and~\ref{sec:collar} reduces the
proof of $L_{3,5}^{\rm R}\ge0.834$ to the construction and verification
of a finite interval certificate over $\Omega_{\rm core}$.
Section~\ref{sec:local-methods} describes the local lower-bound
methods used to certify individual leaf boxes.

\section{Local Lower-Bound Methods}
\label{sec:local-methods}

On each Core5D leaf box, the proof reduces to a local lower bound for
the convex-hull area.

For a parameter box
\[
B
=
I_{x_3}\times I_{y_3}\times I_{\psi_5}\times I_{x_5}\times I_{y_5}
\subseteq\Omega_{\rm core},
\]
the local task is to prove
\[
A(z)
=
\operatorname{area}
\operatorname{conv}
\bigl(D\cup R_3(z)\cup R_5(z)\bigr)
\ge 0.834
\qquad
\text{for all }z\in B.
\]

All methods below follow the same principle:
\[
\text{construct a certified polygon inside the true convex hull}
\]
and then prove that the polygon has area at least $0.834$ throughout
the whole box.

The methods differ only in how the polygon is chosen and how its area
is bounded over the box.
For certification purposes, each accepted local method is an input-output certificate:
from a box \(B\) and recorded finite witness data it returns a lower bound
\(\operatorname{LB}(B)\) and proves
\[
\operatorname{LB}(B)\le \inf_{z\in B}A(z).
\]

\begin{center}
\begin{tabular}{lll}
\hline
method & certificate produced & area estimate\\
\hline
inner polygon &
labelled witness polygon &
interval shoelace\\
template convexity &
convex template &
perturbation bound\\
simple mean-value &
simple template &
mixed area bound\\
\hline
\end{tabular}
\end{center}

\subsection{Unified local certificate}

The common mathematical statement is the following.  Let \(\tau=0.834\).
A local polygon certificate on a box
\(B\) consists of an ordered polygon
\[
P_B(z)
=
[p_1(z),\ldots,p_m(z)]
\]
defined for every \(z\in B\).  When areas or set inclusions are
discussed, \(P_B(z)\) denotes the filled polygonal region bounded by
this ordered polygon.

\begin{proposition}[Local polygon certificate]\label{prop:local-sound}
Suppose that, for every \(z\in B\), the polygon \(P_B(z)\) satisfies:
\begin{enumerate}
\item each vertex \(p_i(z)\) belongs to \(D\cup R_3(z)\cup R_5(z)\);
\item \(P_B(z)\) is convex and positively oriented, or is simple and
  positively oriented;
\item \(\operatorname{area}(P_B(z))\ge\tau\).
\end{enumerate}
Then
\[
A(z)\ge\tau
\qquad
\text{for every }z\in B.
\]
\end{proposition}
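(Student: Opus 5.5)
The plan is to fix an arbitrary \(z\in B\) and prove the chain of inclusions
\[
P_B(z)\subseteq \operatorname{conv}\{p_1(z),\ldots,p_m(z)\}\subseteq \operatorname{conv}\bigl(D\cup R_3(z)\cup R_5(z)\bigr).
\]
Monotonicity of Lebesgue measure then gives \(A(z)\ge\operatorname{area}(P_B(z))\ge\tau\). Since \(z\) was arbitrary, this is exactly the claim.

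The second inclusion is immediate from hypothesis~1. Each \(p_i(z)\) lies in \(D\cup R_3(z)\cup R_5(z)\), and the convex hull of a subset is contained in the convex hull of the superset.

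The first inclusion is the only real step, and I would handle it by cases according to hypothesis~2.

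In the convex case, the filled region bounded by a convex positively oriented polygon is the intersection of the closed left half-planes determined by its consecutive edges. That region is precisely \(\operatorname{conv}\{p_1,\ldots,p_m\}\), so the inclusion holds with equality.

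In the simple case, the Jordan curve theorem for polygons says the closed polygonal curve bounds a compact region \(P\) with \(\partial P\) contained in the union of the edges. Every edge is a segment between two vertices, so it lies in the convex hull \(H\) of the vertices. Now suppose some \(x\in P\) lies outside \(H\). Separate \(x\) from \(H\) by a line and follow the ray from \(x\) pointing away from \(H\). Because \(P\) is bounded, this ray must leave \(P\), and the exit point is a point of \(\partial P\subseteq H\). But every point on the ray lies strictly on the far side of the separating line from \(H\), a contradiction. Hence \(P\subseteq H\).

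Positive orientation is used only so that the signed shoelace quantity certified in hypothesis~3 equals the genuine area of the filled region, rather than its negative. For a simple polygon this identity follows from Green's theorem applied to the boundary traversed counterclockwise.

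I expect no serious obstacle. The one point needing care is the simple nonconvex case: the "filled region" must be read as the bounded Jordan domain, so that \(\partial P\) really lies in the union of the edges. Once that is fixed, the containment and the area identification are standard.
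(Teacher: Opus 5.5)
Your proposal is correct and follows the paper's proof: the same chain \(P_B(z)\subseteq\operatorname{conv}\{p_1(z),\ldots,p_m(z)\}\subseteq\operatorname{conv}(D\cup R_3(z)\cup R_5(z))\) followed by monotonicity of area, with your Jordan-curve and separating-line argument filling in a step the paper only asserts (that a filled simple polygon lies in the convex hull of its vertices). One small remark: hypothesis~3 is stated for the geometric area, not the signed shoelace sum, so positive orientation plays no role in the proposition itself and matters only when hypothesis~3 is later checked through \(S_B(z)\).
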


\begin{proof}
By the first condition,
\[
\operatorname{conv}\{p_1(z),\ldots,p_m(z)\}
\subseteq
\operatorname{conv}\bigl(D\cup R_3(z)\cup R_5(z)\bigr).
\]
By the second condition, the filled polygon lies inside the convex hull
of its vertices.  Hence
\[
P_B(z)
\subseteq
\operatorname{conv}\{p_1(z),\ldots,p_m(z)\}
\subseteq
\operatorname{conv}
\bigl(D\cup R_3(z)\cup R_5(z)\bigr).
\]
Taking areas and using the third condition gives
\[
A(z)
=
\operatorname{area}\operatorname{conv}\bigl(D\cup R_3(z)\cup R_5(z)\bigr)
\ge
\operatorname{area}(P_B(z))
\ge \tau .
\]
\end{proof}

\begin{lemma}[Turn criterion for convexity]\label{lem:turn-convexity}
Let \(P=[p_1,\ldots,p_m]\) be a closed polygon, and set
\(e_i=p_{i+1}-p_i\), cyclically.  Suppose that no edge vanishes, that
\[
\det(e_i,e_{i+1})>0
\qquad (i=1,\ldots,m),
\]
and that the edge-direction map has rotation index one.  Then \(P\) is
a strictly convex positively oriented polygon.
\end{lemma}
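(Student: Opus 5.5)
The plan is to use the turning angles of the edge directions. Let \(\phi_i\in\R/2\pi\Z\) be the direction angle of the nonzero edge \(e_i\), and define the exterior turn \(\alpha_i\in(-\pi,\pi]\) from \(e_i\) to \(e_{i+1}\). Because \(\det(e_i,e_{i+1})>0\), we have \(\sin\alpha_i>0\), so \(\alpha_i\in(0,\pi)\) for every \(i\). The hypothesis that the edge-direction map has rotation index one means \(\sum_{i=1}^m\alpha_i=2\pi\). Choose lifts \(\tilde\phi_1=\phi_1\) and \(\tilde\phi_{i+1}=\tilde\phi_i+\alpha_i\). Then \(\tilde\phi_1<\tilde\phi_2<\cdots<\tilde\phi_m<\tilde\phi_1+2\pi\), so the edge directions are strictly increasing and make exactly one full turn.

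Next I would show that every edge line supports \(P\) with all other vertices strictly on its left. Fix \(i\) and \(k\neq i,i+1\). Write \(\det(e_i,p_k-p_i)=\sum_{j=i+1}^{k-1}\det(e_i,e_j)\), with indices taken cyclically going forward from \(i+1\). Each \(\det(e_i,e_j)=|e_i||e_j|\sin(\tilde\phi_j-\tilde\phi_i)\), and the angle difference lies in \((0,2\pi)\). The summands can be negative once the angle passes \(\pi\), so a termwise sign argument is not enough. Instead I would run the same identity backwards: \(p_k-p_i=-\sum_{j=k}^{i-1}e_j\) going around the other way, and the angles \(\tilde\phi_j-\tilde\phi_i\) for those \(j\) lie in \((-2\pi,0)\) after subtracting \(2\pi\).

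The cleaner route is to split by the sign of \(\sin\). Parametrize the boundary path from \(p_{i+1}\) forward. Its projection onto the normal \(n_i=Je_i/|e_i|\) (left normal) increases while the edge directions lie in \((\tilde\phi_i,\tilde\phi_i+\pi)\) and decreases while they lie in \((\tilde\phi_i+\pi,\tilde\phi_i+2\pi)\). Since the directions are monotone, the height function along the closed boundary is unimodal: zero at \(p_i\) and \(p_{i+1}\), strictly increasing, then strictly decreasing back to zero. Edges exactly at angle \(\pi\) give a plateau, which is harmless. Hence every vertex other than \(p_i,p_{i+1}\) has strictly positive height, and \(P\) lies in the closed left half-plane of every edge line.

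Consequently \(P\) is the boundary of \(\bigcap_i H_i\), where \(H_i\) is the closed left half-plane of edge \(i\). One can argue this via winding number: every point of the intersection's interior has winding number one, using the rotation index and the half-plane property. Every vertex is then a strict corner because \(\alpha_i\in(0,\pi)\), so \(P\) is strictly convex, and it is positively oriented since its signed area is positive. The main obstacle is the unimodality step. I expect to make it rigorous with a careful piecewise-linear monotonicity argument in the lifted angle, and then to check simplicity, which follows because the winding number of points near each edge is \(0\) or \(1\) rather than larger.
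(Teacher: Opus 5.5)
Your proposal is correct, and its first step is the paper's: lift the edge directions, note that each turn lies in \((0,\pi)\) and that the turns sum to \(2\pi\), so the lifted arguments increase strictly through exactly one revolution. From there the two routes diverge. The paper stops and cites ``the standard turning-angle criterion for polygons.'' You instead prove that criterion. Projecting the forward boundary path onto the left normal of \(e_i\), the increments \(|e_j|\sin(\tilde\phi_j-\tilde\phi_i)\) are positive, then at most one is zero, then they are negative. Hence every vertex other than \(p_i,p_{i+1}\) lies strictly to the left of the line \(\ell_i\) through \(e_i\). This strict support-line property is exactly the content the paper leaves to citation, so your route is self-contained at the cost of some length.

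The unimodality step you call the main obstacle is already essentially complete as written: the first edge \(e_{i+1}\) gives a strict rise and the last edge \(e_{i-1}\) a strict fall. What is only sketched is the final passage to simplicity via winding numbers. That passage can be made rigorous. For \(x\) in the interior of \(\bigcap_i H_i\), the angle from \(p-x\) to the current edge stays in \((0,\pi)\), so the winding number about \(x\) equals the rotation index, namely one.

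You can also bypass winding numbers. Let \(K=\operatorname{conv}\{p_1,\ldots,p_m\}\). The strict half-plane property gives \(K\cap\ell_i=[p_i,p_{i+1}]\), so each \([p_i,p_{i+1}]\) is an edge of the convex polygon \(K\). The \(p_j\) are \(m\) distinct extreme points, since \(p_k\neq p_i\) for \(k\notin\{i,i+1\}\) by strict positivity of the height and \(p_{i+1}\neq p_i\) since \(e_i\neq0\). A closed \(m\)-cycle through the \(m\) vertices of a convex \(m\)-gon that uses only its edges must be \(\partial K\) traversed once. It is traversed counterclockwise because \(K\) lies to the left of every directed edge.
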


\begin{proof}
Choose continuous arguments \(\alpha_i\) for the edge directions along
one turn of the polygon.  The strict inequalities
\(\det(e_i,e_{i+1})>0\) mean that each exterior turn
\(\alpha_{i+1}-\alpha_i\), taken in \((0,\pi)\), is strictly positive.
Rotation index one says that the total turn is \(2\pi\).  Hence the edge
directions wind once around \(S^1\) with strictly increasing arguments.
The standard turning-angle criterion for polygons then gives a simple
positively oriented boundary whose supporting direction changes
monotonically once around the circle; the polygon is strictly convex.
\end{proof}

The remaining task in a leaf box is to verify the three
hypotheses of Proposition~\ref{prop:local-sound}.  The third hypothesis
is usually checked through the signed shoelace area.

For an ordered polygon with vertices
\[
p_i(z)=(X_i(z),Y_i(z)),
\qquad i=1,\ldots,m,
\]
the signed area is
\[
S_B(z)
=
\frac12
\sum_{i=1}^m
\left(
X_i(z)Y_{i+1}(z)-Y_i(z)X_{i+1}(z)
\right),
\]
with the convention $p_{m+1}=p_1$.  If the polygon is certified to be
positively oriented and simple, then
$\operatorname{area}(P_B(z))=S_B(z)$.

If it is certified to be convex and positively oriented, this equality
is immediate.  If it is only certified to be simple and positively
oriented, the shoelace formula still gives its area.

Therefore the local methods below all have the same mathematical
shape: they certify the vertices, certify convexity or simplicity with
positive orientation, and prove
\[
S_B(z)\ge\tau
\qquad
(z\in B),
\]
together with the required geometric condition.

For certification purposes, the three methods share the same logical
interface:
\[
(B,\mathsf{params})
\longmapsto
(\mathsf{pass},\operatorname{LB}(B)).
\]
The interval verifier reconstructs the witness polygon or template from
\(B\) and the finite witness data.  A passed local certificate means
precisely that the hypotheses of Proposition~\ref{prop:local-sound} have
been certified with \(\operatorname{LB}(B)\ge\tau\).

\subsection{Direct inner-polygon method}

The first method constructs a polygon directly from certified support
points of the three bodies.

\smallskip
\noindent\textbf{Method~1: Direct inner-polygon certificate.}
For a leaf box $B$, choose a finite ordered list of labels
$\ell_1,\ldots,\ell_m$.  Each label $\ell_i$ specifies a point on one
of the bodies $D$, $R_3(z)$, or $R_5(z)$.  For example, a label may
specify:
\begin{itemize}
\item a fixed point on the disk $D$;
\item a vertex of $R_3(z)$;
\item a point on a circular arc of $R_3(z)$;
\item a vertex of $R_5(z)$;
\item a point on a circular arc of $R_5(z)$.
\end{itemize}

Each label determines a point function \(p_i(z)\) on the corresponding
body for every \(z\in B\).  Thus the membership condition
\[
p_i(z)\in D\cup R_3(z)\cup R_5(z)
\]
is certified by the label and by the interval construction of the point.
Outward-rounded interval arithmetic is then used to enclose its
coordinates:
\[
X_i(z)\in X_i(B),
\qquad
Y_i(z)\in Y_i(B)
\qquad
(z\in B).
\]

Using interval arithmetic, compute an interval enclosure \(S_B(B)\) for
the signed shoelace area \(S_B(z)\).  If the lower endpoint satisfies
\(\underline S_B\ge\tau\), and if the polygon is certified to be convex
and positively oriented or simple and positively oriented throughout
\(B\), then Proposition~\ref{prop:local-sound} gives
\(A(z)\ge\tau\) on the whole box.

This method is robust and simple.  It is especially effective on boxes
where the active supporting structure of the convex hull is stable and
where direct interval evaluation of the polygonal area is not too
wide.

We denote this local pass by
\[
\texttt{PASS\_CORE5D\_INNER\_POLYGON}.
\]

\subsection{Template convexity method}

The second method improves the direct inner-polygon method by fixing a
combinatorial support template.

The main difficulty in direct interval evaluation is that the chosen
support polygon may vary as \(z\) varies in a box.  On a sufficiently
small box, however, a fixed ordered template can often be certified over
the entire box.

\smallskip
\noindent\textbf{Method~2: Template convexity certificate.}
Let $I=(\ell_1,\ldots,\ell_m)$ be an ordered support template.  For
each $z\in B$, the labels define points
$p_1(z),\ldots,p_m(z)\in D\cup R_3(z)\cup R_5(z)$.

The template certificate verifies the following two properties over the
whole box $B$.  First, the polygon
$P_I(z)=[p_1(z),\ldots,p_m(z)]$ is positively oriented and convex.  This
is certified by a perturbation argument.  The template is chosen as a
cyclic subsequence of the convex hull of the center configuration, so
the center polygon is a strictly convex positively oriented polygon with
rotation index one.  Each vertex is assigned a rigorous displacement
radius \(\varepsilon_i\), valid throughout \(B\):
\[
\|p_i(z)-p_i(c_B)\|\le \varepsilon_i
\qquad (z\in B).
\]
For every consecutive triple, the interval verifier checks a strict lower turn
bound over the full product of these displacement balls.  Equivalently,
for every choice of points \(q_i\) satisfying
\(\|q_i-p_i(c_B)\|\le\varepsilon_i\), the consecutive turn
\[
\det
\left(
q_{i+1}-q_i,
q_{i+2}-q_{i+1}
\right)
\]
has a positive lower bound, cyclically in \(i\).  This is implemented as
a center-turn margin minus a certified turn-perturbation bound.

These strict turn bounds also exclude collapsed consecutive edges: if
an edge in the displacement balls vanished, an adjacent turn would be
zero.  For a fixed \(z\in B\), the straight-line homotopy
\[
q_i(t)=p_i(c_B)+t\bigl(p_i(z)-p_i(c_B)\bigr),
\qquad 0\le t\le1,
\]
stays inside the same displacement balls.  Hence all consecutive turns
remain strictly positive along the homotopy, no edge vanishes, and the
edge-direction rotation index stays equal to the center value, namely
one.  By Lemma~\ref{lem:turn-convexity}, the turn-margin check certifies
global convexity, not merely local left turns.

Second, the signed area of this template polygon is bounded below.  The
center signed area is evaluated with outward rounding; write
\(\underline S_I(c_B)\) for the lower endpoint of this center-area
enclosure.  The basic bound is this center lower endpoint minus a
certified perturbation error coming from the same displacement radii:
\[
S_I(z)
\ge
\underline S_I(c_B)-E_I(B)
\ge0.834
\qquad
(z\in B).
\]
Here \(E_I(B)\) is a certified perturbation bound for the variation of
the template area over \(B\).
The same template may also be certified by the mean-value area bound
described in the next subsection.

Since every \(p_i(z)\) lies on one of the three test bodies, the three
hypotheses of Proposition~\ref{prop:local-sound} hold.  Thus
\(A(z)\ge\tau\) on \(B\).

This method is more structured than the direct inner-polygon method.
It is useful when the active hull template is stable across $B$, and
the center-template perturbation bounds, optionally sharpened by the
mean-value estimate, can prove convexity and area lower bounds without
excessive overestimation.

We denote this pass by
\[
\texttt{PASS\_CORE5D\_TEMPLATE\_CONVEX\_BOUND}.
\]

\subsection{Simple-template mean-value method}

The third method is the simple-template version of the preceding
template certificate.  It is used when convexity is not certified, but
the ordered template is still certified to be simple and positively
oriented on the whole box.  Convexity is not required here: simplicity
and positive orientation are enough to identify the signed shoelace area
with the filled area in Proposition~\ref{prop:local-sound}.

Let $c_B$ be the center of a parameter box $B$, and let
$r_B=(r_1,\ldots,r_5)$ be its half-width vector.  Suppose a support
template $I$ is chosen and gives a polygon area function $S_I(z)$.

Assume first that the polygon is certified to be simple and positively
oriented throughout $B$.  Convexity is sufficient but not necessary
for this method; simplicity plus orientation is enough for the signed
shoelace area to equal the polygon area.

The mean-value method estimates $S_I(z)$ by comparing it with the
center value along the line segment from \(c_B\) to \(z\).  For
\(z\in B\),
\[
S_I(z)-S_I(c_B)
=
\int_0^1
\nabla S_I\bigl(c_B+t(z-c_B)\bigr)\cdot (z-c_B)\,dt .
\]
Since \(B\) is convex, the segment \(c_B+t(z-c_B)\) stays in \(B\).
Therefore
\[
S_I(z)
\ge
\underline S_I(c_B)
-
\sum_{k=1}^5
G_k(B)r_k,
\]
where
\[
G_k(B)
\ge
\sup_{z\in B}
\left|
\frac{\partial S_I}{\partial z_k}(z)
\right|
\]
is an interval-certified derivative bound.  It is obtained by
differentiating the shoelace expression symbolically with respect to the
five parameters and evaluating the resulting expressions by
outward-rounded interval arithmetic on \(B\).  The center area
\(\underline S_I(c_B)\) is the lower endpoint of an outward-rounded
evaluation of the center shoelace area.  The derivative expressions
involve only the derivatives of translated or rotated support points.
For a point of the form
\[
p(z)=T_{(x_5,y_5)}Q_{\psi_5}p_0,
\]
for example, one uses
\[
\frac{\partial p}{\partial x_5}=(1,0),
\qquad
\frac{\partial p}{\partial y_5}=(0,1),
\qquad
\frac{\partial p}{\partial \psi_5}=Q_{\psi_5}Jp_0,
\]
where $J$ is rotation by $\pi/2$.  Arc points are handled in the same
way: the fixed or interval-enclosed arc parameter is included among the
interval inputs when the point coordinate derivatives are evaluated.
Thus \(G_k(B)\) is a rigorous supremum bound over the whole box and the
relevant template-parameter intervals, not a sampled gradient.

Let
\[
L_{\rm pert}(B)=\underline S_I(c_B)-E_I(B)
\]
be the perturbation lower bound from the previous subsection, and let
\[
L_{\rm mv}(B)
=
\underline S_I(c_B)-\sum_{k=1}^5G_k(B)r_k
\]
be the mean-value lower bound.  Both are certified lower bounds for
\(\inf_{z\in B}S_I(z)\).  If either certified bound is at least
\(\tau\), then
\[
S_I(z)\ge\tau
\qquad
(z\in B).
\]

Together with the certified simplicity and positive orientation of the
template polygon, Proposition~\ref{prop:local-sound} proves
\(A(z)\ge\tau\) for every \(z\in B\).

The advantage of this method is that the center area is evaluated
sharply with outward rounding, while only the variation over the box is
bounded by interval derivatives.  This is often much less conservative
than direct interval evaluation of the whole shoelace expression.

We denote this pass by
\[
\texttt{PASS\_CORE5D\_TEMPLATE\_MEAN\_VALUE\_SIMPLE\_BOUND}.
\]

\subsection{Certification of simplicity and orientation}

The simple-template method requires a verified simple polygon on the
entire box.  The template convexity method requires an even stronger
convexity certificate.  We record the relevant checks.

Simplicity is certified by excluding edge crossings.  For each
nonadjacent pair of edges
\[
[p_i(z),p_{i+1}(z)]
\quad\text{and}\quad
[p_j(z),p_{j+1}(z)],
\]
one checks, using interval arithmetic, that their oriented separation
signs cannot simultaneously allow an intersection.  Equivalently, for
each nonadjacent pair, at least one standard segment-intersection
predicate is interval-separated away from zero.

For a polygon already certified to be simple, positive orientation is
certified by showing that its signed shoelace area is positive:
\[
S_P(z)>0
\qquad (z\in B).
\]

Convexity is certified by the stricter turn-margin framework used in
the template-convex method.  In particular, the consecutive turn
conditions
\[
\det
\left(
p_{i+1}(z)-p_i(z),
p_{i+2}(z)-p_{i+1}(z)
\right)
>0
\]
are used with strict interval margins over the allowed displacement
balls.  These margins exclude edge degeneracy, and the straight-line
homotopy from the center template keeps rotation index one.  The
cross-product conditions together with edge nondegeneracy and this
rotation-index-one homotopy certify global convexity by
Lemma~\ref{lem:turn-convexity}.

Once simplicity and positive orientation are certified, the signed
shoelace area equals the geometric area enclosed by the filled polygon.
For a convex positively oriented polygon the same conclusion is
immediate.  Finally, the certified vertex membership gives
\[
\operatorname{conv}\{p_1(z),\ldots,p_m(z)\}
\subseteq
\operatorname{conv}\bigl(D\cup R_3(z)\cup R_5(z)\bigr).
\]
The filled polygonal region is contained in the convex hull of its
vertices, both in the convex case and in the certified-simple case.
Thus the local geometric checks give the convex-hull inclusion required
by Proposition~\ref{prop:local-sound}.

\subsection{Why multiple local methods are used}

No single local method is uniformly optimal on the entire core box.
The direct inner-polygon method is simple and robust, but can be too
conservative on boxes where interval overestimation is large.  The
template convexity method is stronger when the active convex-hull
combinatorics are stable.  The simple-template mean-value method is
sharper near delicate regions where the area margin is small but varies
smoothly.

The certificate therefore allows each leaf box to be certified by any
one of the approved local methods:
\[
\texttt{PASS\_CORE5D\_INNER\_POLYGON},
\]
\[
\texttt{PASS\_CORE5D\_TEMPLATE\_CONVEX\_BOUND},
\]
or
\[
\texttt{PASS\_CORE5D\_TEMPLATE\_MEAN\_VALUE\_SIMPLE\_BOUND}.
\]

This flexibility affects the efficiency and size of the certificate,
but not its logical structure.  Every accepted leaf box must ultimately
prove the same mathematical statement:
\[
\inf_{z\in B}A(z)\ge0.834.
\]

Each approved local method is simply a different way of verifying
the hypotheses of Proposition~\ref{prop:local-sound}.  It remains to
check that the leaves cover the core domain and that every leaf box
carries a valid interval certificate.

\section{Interval Certification and the Certified \texorpdfstring{$0.834$}{0.834} Theorem}
\label{sec:interval-certification}

The preceding reductions leave a finite collection of verifiable
interval statements.  Their assembly into a certificate gives the final
lower-bound theorem.

The certification step is logically important.  The search procedure may
generate boxes, select templates, and record which local method succeeds
on each leaf.  These recorded data are accepted only after deterministic
interval checking of every assertion needed for the theorem:
\[
A(z)\ge0.834
\qquad
\text{for all }z\in\Omega_{\rm full}.
\]
The interval verifier returns \texttt{PASS} for the archived certificate;
the certificate is then a finite proof of the lower bound.

\subsection{Certificate components}

\subsubsection*{Fixed constants}

The certificate records all numerical constants used in the proof,
including
\[
b_3=0.190700,
\qquad
b_5=0.194601,
\qquad
a_0=0.08,
\]
and the target value
\[
\tau=0.834.
\]
It also records the angular domain $0\le\psi_5\le\pi/5$, and the core
box $\Omega_{\rm core}=[-0.08,0.08]^2\times[0,\pi/5]\times[-0.08,0.08]^2$.
All constants are interpreted as intervals with outward rounding in
the interval certificate.

\subsubsection*{Analytic reduction records}

The certificate records the analytic far-field exclusions from
Section~\ref{sec:analytic-reduction}:
\[
\|c_3\|\ge b_3 \;\Longrightarrow\; A(z)>\tau,
\qquad
\|c_5\|\ge b_5 \;\Longrightarrow\; A(z)>\tau.
\]
These checks depend only on the disk-plus-point function
\[
G(\rho)
=
\frac{\pi}{4}
+
\frac12\sqrt{\rho^2-\frac14}
-
\frac14
\arccos\left(\frac{1}{2\rho}\right),
\]
and on outward-rounded verification of
$G(d_3(b_3))>\tau$ and $G(d_5(b_5))>\tau$.

\subsubsection*{Collar certificates}

The certificate records the two one-body collar verifications on
$C_3^{\rm collar}$ and $C_5^{\rm collar}$.  They prove
\[
\operatorname{area}\operatorname{conv}\bigl(D\cup R_3(r,\theta)\bigr)
\ge\tau
\quad
\text{on }C_3^{\rm collar},
\]
and
\[
\operatorname{area}\operatorname{conv}\bigl(D\cup R_5(r,\theta)\bigr)
\ge\tau
\quad
\text{on }C_5^{\rm collar}.
\]
These collar certificates, together with the far-field exclusions,
prove that every normalized configuration in the coarse bounded domain
but outside the core box is already safe.
The collar leaves are certified by the one-body inner-polygon method.
In the archived implementation this method has label
\[
\texttt{PASS\_ONEBODY\_INNER\_POLYGON}.
\]

\subsubsection*{Core5D leaf certificates}

The main finite certificate consists of a collection of leaf boxes
$\mathcal B=\{B_\alpha\}_{\alpha\in I}$ covering the core box.  Each
leaf box records the box and finite witness data needed to certify a
local lower bound $\inf_{z\in B_\alpha}A(z)\ge\tau.$

Each leaf is certified by one of three approved mathematical local
methods: the Core5D inner-polygon method, the template-convexity method,
or the simple-template mean-value method.  The corresponding archived
implementation labels are recorded in the certificate documentation.

The stored method label must belong to the approved list.  Acceptance
depends on reconstructing a successful interval lower-bound certificate
on the recorded box.  A core leaf is accepted exactly when the interval
verifier produces a rigorous proof by one of the approved final methods
with recomputed lower bound strictly greater than \(\tau\).  If no
approved method succeeds on a leaf, the certificate fails.

\subsubsection*{Certificate status}

The archived certificate has status
\[
\texttt{PASS}.
\]
The interval verifier deterministically checks all constants, analytic
reductions, collar certificates, exact collar/core coverage, and local
interval lower bounds from the archived data.  Its role is to
instantiate the finite hypotheses of the global soundness theorem below.

\begin{center}
\begin{tabular}{ll}
\hline
Certificate status & \texttt{PASS}\\
Arithmetic & outward-rounded interval arithmetic\\
Target value & \(0.834\)\\
Certificate records & \(6{,}747{,}165\)\\
Errors and warnings & \(0\)\\
Checks verified & constants, reductions, exact coverage, local bounds\\
\hline
\end{tabular}
\end{center}

The certificate-record count includes the archived local certificates,
coverage summaries, and metadata needed for the finite verification.
The following margins are the audit quantities that bridge the displayed
mathematical constants, the recorded rational certificate inputs, and the
target value \(0.834\).
\begin{center}
\begin{tabular}{ll}
\hline
smallest strict certificate margin above \(0.834\) & \(>2.6\cdot10^{-11}\)\\
largest angular endpoint recording gap & \(<2\cdot10^{-16}\)\\
induced Hausdorff-area perturbation & \(<10^{-14}\)\\
errors and warnings in final audit & \(0\)\\
\hline
\end{tabular}
\end{center}

The certified parts have the following minimum interval lower-bound
enclosures.  The displayed lower bounds are midpoint-radius interval
enclosures; in each row, the lower endpoint of the displayed interval is
strictly larger than \(0.834\).
\[
\begin{array}{c|r|r|c}
\text{part} & \text{frontiers} & \text{leaves} &
\text{minimum interval lower-bound enclosure}\\
\hline
R_3\text{-collar} & 256 & 2{,}374 &
[0.83400005962221344 \pm 2.69\cdot10^{-30}]\\
R_5\text{-collar} & 256 & 1{,}029 &
[0.83400044554297159 \pm 4.31\cdot10^{-30}]\\
\text{Core5D} & 256 & 6{,}742{,}989 &
[0.83400000002676268 \pm 6.97\cdot10^{-31}]
\end{array}
\]

The public certificate package consists of the compressed certificate
shards, the standalone interval verifier, and the final audit
report, and is available in the public archive~\cite{CertificateArchive}.
The certificate shard set is identified by the SHA256 hash
\begin{center}
\small\ttfamily
c5fdc18d4e38f65d54e4e730e36ef1e41166ecc3a991494e653825f667eec78a
\end{center}
This checksum identifies the archived certificate shard set; the
standalone interval verifier and final audit report are included
in the same public archive.

\subsection{Reproducibility of the interval certificate}

The public archive is organized so that the interval certificate can be
verified without the certificate-generation code.  After unpacking the
archive, the certificate directory contains
\begin{center}
\begin{tabular}{p{0.32\textwidth}p{0.52\textwidth}}
\texttt{final\_audit.json} & final audit from a full verification\\
\texttt{shards\_hash.json} & SHA256 record for \texttt{shards/}\\
\texttt{shards/} & compressed certificate records\\
\texttt{verify\_certificate.py} & standalone verifier entry point\\
\texttt{verifier/} & verifier implementation
\end{tabular}
\end{center}
The directory \texttt{shards/} contains the compressed certificate
records.  The standalone verifier is contained in
\texttt{verify\_certificate.py} and \texttt{verifier/}; it only checks
the archived certificate and does not generate or repair certificates.

The verifier is written in Python and uses \texttt{python-flint} for
outward-rounded interval arithmetic.  The archived certificate was tested
with Python~3.9 and \texttt{python-flint}.  It was verified by running
the standalone verifier on the certificate directory.
A representative command is
\begin{verbatim}
python3 verify_certificate.py --audit-file final_audit_rerun.json
\end{verbatim}
or, with an explicit worker count,
\begin{verbatim}
python3 verify_certificate.py \
  --workers 4 \
  --audit-file final_audit_rerun.json
\end{verbatim}
By default the verifier uses \(\max(1,\texttt{cpu\_count}-1)\) workers.
The full verification may take several hours to tens of hours on an
ordinary multi-core desktop or laptop, depending on CPU speed, memory
bandwidth, and the number of workers.

A successful run writes a JSON audit whose key status fields are
\[
\texttt{coverage\_status}=\texttt{PASS},
\qquad
\texttt{status}=\texttt{PASS},
\qquad
\texttt{proof\_complete}=\texttt{true}.
\]
The archived final audit has these status fields, no errors, no
warnings, and \(6{,}747{,}165\) certificate rows.  The verifier checks
the two metadata proof records \(\texttt{constants}\) and
\(\texttt{reduction\_constants}\), then checks the three public geometric
parts \(\texttt{r3\_collar}\), \(\texttt{r5\_collar}\), and
\(\texttt{core5d}\).

The hash in \(\texttt{shards\_hash.json}\) applies to \texttt{shards/}
only.  It is computed from the sorted relative path, a null separator,
the file bytes, and another null separator for every file under
\texttt{shards/}.  In the archived package this shard set contains
\(773\) files and \(787{,}849{,}048\) bytes, with SHA256 hash
\begin{center}
\small\ttfamily
c5fdc18d4e38f65d54e4e730e36ef1e41166ecc3a991494e653825f667eec78a
\end{center}

\subsection{Certification checks}

The interval verifier performs the following checks.

\subsubsection*{Constant and domain checks}

The verifier first reconstructs the constants and domains used in the
proof.  It verifies that the core box is exactly the domain stated in
the paper:
$\Omega_{\rm core}=[-0.08,0.08]^2\times[0,\pi/5]\times[-0.08,0.08]^2$.
It then verifies that every recorded leaf box lies within the
appropriate domain.  The recorded constants are decimal rational inputs;
their relation to the ideal constants displayed in the text is handled
by the explicit perturbation bridge described below.

\subsubsection*{Far-field checks}

The verifier checks the analytic inequalities used in
Section~\ref{sec:analytic-reduction}, with outward rounding:
$G(d_3(b_3))>\tau$ and $G(d_5(b_5))>\tau$.
These checks certify that any possible violation of $A(z)\ge\tau$ must
lie in $\Omega_{\rm coarse}$.

\subsubsection*{Collar checks}

The verifier checks all collar leaves.  The result is the pair of
one-body certified statements on $C_3^{\rm collar}$ and
$C_5^{\rm collar}$.  Together with the far-field checks, this proves
that every possible violation must lie in $\Omega_{\rm core}$.

\subsubsection*{Coverage checks}

The verifier checks that the recorded leaves cover each public
geometric domain: $C_3^{\rm collar}$, $C_5^{\rm collar}$, and
$\Omega_{\rm core}$.  In each case the accepted leaf boxes form a
complete subdivision of the corresponding prescribed domain, with no
gaps and only the prescribed shared faces; equivalently, with no
positive-volume overlaps.

The constants used by the certificate are recorded as decimal rational inputs.
These recorded inputs are the exact data consumed by the verifier.
Several of them represent ideal mathematical constants displayed in the
paper, such as angular endpoints and vertex radii.  The bridge from the
recorded rational inputs to the ideal constants is verified by
outward-rounded perturbation bounds.  In the largest such case, the
angular endpoint gap is less than \(2\cdot10^{-16}\); the induced
Hausdorff-area perturbation is less than \(10^{-14}\), while the minimum
strict certificate margin is larger than \(2.6\cdot10^{-11}\).  The rounding
perturbations of the vertex-radius constants are smaller still.
Therefore the recorded rational inputs, together with the verified
perturbation bridge, prove the lower bound for the mathematical domains
and ideal constants stated in the paper.

\subsubsection*{Local lower-bound certification}

For every collar or core leaf box $B_\alpha$, the verifier recomputes a
rigorous lower bound from the recorded box and finite witness data.  The
stored method label identifies an approved class of local certificate.
Collar leaves use the one-body method recorded in the leaf; core leaves
are accepted when one of the approved final local methods produces a
rigorous interval proof with recomputed lower bound at least \(\tau\) on
the whole box.

The verification does not rely on auxiliary numerical claims or on a
lower bound stored with the certificate record.  It recomputes the
interval proof directly.

\subsubsection*{Final certificate status}

The certificate succeeds only after all constant, domain, reduction,
coverage, perturbation, and local lower-bound checks pass.  For the
archived certificate used in this paper, these checks all pass; hence the
certificate proves \(A(z)\ge\tau\) for all \(z\in\Omega_{\rm full}\).

\begin{proposition}[Certified interval instance]\label{prop:certified-instance}
The archived interval certificate verifies all finite hypotheses used in
the proof below: the stated constants and domains, the far-field
exclusions, the collar certificates, the collar/core coverage checks, and
the local lower-bound checks at target \(\tau=0.834\).  The verifier
returns \(\texttt{PASS}\) with no errors or warnings.
\end{proposition}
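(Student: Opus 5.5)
The statement is a finite, computational assertion: the archived interval certificate instantiates every finite hypothesis used later, and the verifier returns \texttt{PASS}. I would prove it by \emph{reduction to deterministic execution} rather than by any new mathematics. The plan is to list each hypothesis the final soundness argument consumes, then match each one to a concrete check in the standalone verifier.

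The hypotheses to cover are:
\begin{enumerate}
\item the constants $b_3,b_5,a_0,\tau$ and the domains $\Omega_{\rm coarse}$, $C_3^{\rm collar}$, $C_5^{\rm collar}$, $\Omega_{\rm core}$;
\item the far-field inequalities $G(d_3(b_3))>\tau$ and $G(d_5(b_5))>\tau$ of Lemmas~\ref{lem:far-r3} and~\ref{lem:far-r5};
\item the two one-body collar statements required in Proposition~\ref{prop:collar};
\item exact coverage of the three public domains by the recorded leaves, as required by Proposition~\ref{prop:core-cert};
\item a local lower bound on every leaf, obtained through one of the approved methods satisfying Proposition~\ref{prop:local-sound}.
\end{enumerate}

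The steps, in order, are as follows.

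\textbf{Step 1: archive integrity.} Confirm that the shard set hashes to the recorded SHA256 value, so that the data checked are exactly the published data.

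\textbf{Step 2: constants and far-field reductions.} Run the verifier's \texttt{constants} and \texttt{reduction\_constants} records. These recompute the far-field inequalities with 200-bit outward rounding, and the resulting intervals are those displayed in Section~\ref{sec:analytic-reduction}, both strictly positive.

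\textbf{Step 3: collar and Core5D parts.} Run the three geometric parts \texttt{r3\_collar}, \texttt{r5\_collar} and \texttt{core5d}. For each, the verifier checks that the leaves form an exact endpoint subdivision of the root domain. On each leaf it reconstructs the witness polygon from the recorded finite data, certifies vertex membership, certifies convexity via Lemma~\ref{lem:turn-convexity} or certifies simplicity and orientation, and encloses the shoelace area. It accepts the leaf only if the lower endpoint exceeds $\tau$.

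\textbf{Step 4: rational-to-ideal bridge.} Check that the decimal rational inputs differ from the ideal constants (such as $\pi/5$ and the vertex radii) by less than $2\cdot10^{-16}$. The induced area perturbation is then below $10^{-14}$, which is absorbed by the minimum margin of more than $2.6\cdot10^{-11}$ in the audit table.

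The conclusion follows by reading the audit JSON: \texttt{coverage\_status}$=$\texttt{PASS}, \texttt{status}$=$\texttt{PASS}, \texttt{proof\_complete}$=$\texttt{true}, zero errors and warnings, and $6{,}747{,}165$ rows. Since each check is a rigorous interval computation implementing a soundness proposition already proved, a \texttt{PASS} certifies each listed hypothesis.

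The main obstacle is the \textbf{trustworthiness of the verifier}, not any mathematical argument. Two points need care.
\begin{enumerate}
\item Every floating-point operation must be routed through \texttt{python-flint} balls, including trigonometric evaluation of arc points and rotations, so that no unrounded float enters a predicate.
\item The coverage check must be exact (rational endpoint matching) rather than approximate.
\end{enumerate}
I would address both by auditing that the verifier's only numeric type is the arb ball or exact rational, and by recording the run's reproducibility via the archived audit file.
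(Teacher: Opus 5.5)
Your proposal is correct and follows the paper's argument. The paper proves the proposition only by the deterministic, outward-rounded run of the standalone verifier on the archived certificate, and your Steps 1--4 unpack the checks that Section~\ref{sec:interval-certification} lists: the shard hash, the \texttt{constants} and \texttt{reduction\_constants} records, the three geometric parts with exact coverage and recomputed local bounds, and the rational-to-ideal perturbation bridge absorbed by the $2.6\cdot10^{-11}$ margin.
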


\begin{proof}
This is the deterministic interval verification of the archived certificate
package.  The verifier reconstructs the interval computations from the
recorded boxes and finite witness data, uses outward rounding throughout,
and accepts only after all checks listed above pass with no errors or
warnings.
\end{proof}

\subsection{Global soundness theorem}

The global proof follows by assembling the preceding reductions.

\begin{theorem}\label{thm:L35-lower}
The finite Reuleaux test value satisfies
\[
L_{3,5}^{\rm R}\ge0.834.
\]
\end{theorem}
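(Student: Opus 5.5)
The plan is to assemble the reductions already established into a case analysis over the normalized parameter domain. By Proposition~\ref{prop:normalization},
\[
L_{3,5}^{\rm R}=\inf_{z\in\Omega_{\rm full}}A(z),
\]
so it suffices to show $A(z)\ge0.834$ for every $z\in\Omega_{\rm full}$. I will split $\Omega_{\rm full}$ into the three pieces $\Omega_{\rm full}\setminus\Omega_{\rm coarse}$, $\Omega_{\rm coarse}\setminus\Omega_{\rm core}$, and $\Omega_{\rm core}$, and treat each piece with the corresponding reduction.

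On the first piece, Proposition~\ref{prop:far-field} applies directly. Its only inputs are the disk-plus-point formula of Lemma~\ref{lem:disk-point}, the monotonicity of $d_3,d_5$ and $G$, and the two outward-rounded inequalities $G(d_3(b_3))>\tau$ and $G(d_5(b_5))>\tau$. These inequalities are among the finite hypotheses certified in Proposition~\ref{prop:certified-instance}. On the second piece, I invoke Proposition~\ref{prop:collar}. Its hypotheses are the two one-body collar statements on $C_3^{\rm collar}$ and $C_5^{\rm collar}$. These follow from the verified collar leaves, each certified by the one-body inner-polygon method, together with the verified exact coverage of the collar domains. The one-body symmetry reduction to the angular fundamental intervals $[0,\pi/n]$ was already justified before Proposition~\ref{prop:collar}, so nothing further is needed there.

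On the core piece, I apply Proposition~\ref{prop:core-cert}. Its coverage hypothesis is the verified exact subdivision of $\Omega_{\rm core}$. Its local hypothesis, $\operatorname{LB}(B_\alpha)\ge\tau$ on each leaf, is supplied by Proposition~\ref{prop:local-sound}, instantiated through whichever approved method the leaf carries. For the template-convexity method, convexity rests on Lemma~\ref{lem:turn-convexity} via the straight-line homotopy argument. For the mean-value method, the filled-area identification rests on certified simplicity and positive orientation. In every case the vertex-membership hypothesis is certified by construction of the labels. Combining the three pieces gives $A(z)\ge\tau$ on all of $\Omega_{\rm full}$, and taking the infimum yields $L_{3,5}^{\rm R}\ge0.834$.

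The main obstacle is not logical but the bridge between the recorded rational data and the ideal domains. The certificate works on rational boxes with, for instance, an angular endpoint approximating $\pi/5$ and rounded vertex radii. I would handle this with the perturbation bound recorded in the audit. The endpoint gap is below $2\cdot10^{-16}$, and the induced Hausdorff displacement of the placed bodies changes the convex-hull area by less than $10^{-14}$; this follows because $A$ is Lipschitz in Hausdorff distance, via Steiner's formula with bounded perimeter. That change is far below the minimum strict margin $2.6\cdot10^{-11}$. So any ideal configuration not literally in a recorded box lies within that perturbation of one that is, and it still has area exceeding $\tau$. With this bridge, Proposition~\ref{prop:certified-instance} discharges every finite hypothesis, and the theorem follows.
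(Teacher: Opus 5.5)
Your proposal is correct and takes essentially the same route as the paper. You split $\Omega_{\rm full}$ into the same three pieces: far-field, collar and core. You discharge them via Propositions~\ref{prop:far-field}, \ref{prop:collar} and \ref{prop:core-cert}, with the finite hypotheses supplied by Proposition~\ref{prop:certified-instance}, and you conclude through the normalization identity $L_{3,5}^{\rm R}=\inf_{\Omega_{\rm full}}A$. Your Steiner-formula justification of the bridge from rational data to ideal constants is a sound elaboration of a step the paper folds into the certified-instance proposition.
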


\begin{proof}
By Proposition~\ref{prop:certified-instance}, the finite interval
certificate verifies the far-field exclusions, collar certificates, core
coverage, and all local lower-bound checks at target \(0.834\).
Let $z\in\Omega_{\rm full}$ be arbitrary.

If $z\notin\Omega_{\rm coarse}$, then the far-field reductions of
Section~\ref{sec:analytic-reduction} imply $A(z)>0.834$.

If $z\in\Omega_{\rm coarse}\setminus\Omega_{\rm core}$, then the
collar reductions of Section~\ref{sec:collar} imply $A(z)\ge0.834$.
Indeed, the relevant one-body convex hull is contained in the full
three-body convex hull, so adding the remaining body cannot decrease the
area.

It remains to consider the case $z\in\Omega_{\rm core}$.  By the core
coverage check, there exists a leaf box $B_\alpha$ such that
$z\in B_\alpha$.  By the local interval certificate on this leaf,
$A(z)\ge\inf_{w\in B_\alpha}A(w)\ge0.834$.

In all cases, $A(z)\ge0.834$.  Since $z\in\Omega_{\rm full}$ was
arbitrary,
\[
\inf_{z\in\Omega_{\rm full}}A(z)\ge0.834.
\]
By the five-parameter normalization proved in
Section~\ref{sec:normalization},
$L_{3,5}^{\rm R}=\inf_{z\in\Omega_{\rm full}}A(z)$, so
$L_{3,5}^{\rm R}\ge0.834$.
\end{proof}

\subsection{Certified lower bound for \texorpdfstring{$\aLeb$}{aLeb}}

The finite test value $L_{3,5}^{\rm R}$ is a lower bound for the Lebesgue
constant in the sense that
\[
L_{3,5}^{\rm R}\le \aLeb.
\]
This was proved in Section~\ref{sec:low-order}, because
$\{D,R_3,R_5\}\subset\mathcal W_1$.

Combining this with Theorem~\ref{thm:L35-lower} gives the certified
lower bound.

\begin{theorem}\label{thm:certified}
The Lebesgue universal covering constant satisfies
\[
\aLeb\ge0.834.
\]
\end{theorem}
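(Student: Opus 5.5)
The plan is to obtain Theorem~\ref{thm:certified} by chaining two inequalities already established. The first is the comparison $L_{3,5}^{\rm R}\le\aLeb$ from Section~\ref{sec:low-order}. It holds because $\mathcal F_{3,5}^{\rm R}=\{D,R_3,R_5\}\subset\mathcal W_1$. Hence every compact convex set containing congruent copies of all width-one constant-width bodies is admissible in the definition of $\Lambda(\mathcal F_{3,5}^{\rm R})$. By Proposition~\ref{prop:cw-equal}, the infimum of areas over that smaller admissible class is exactly $\aLeb$. The second inequality is $L_{3,5}^{\rm R}\ge0.834$, which is Theorem~\ref{thm:L35-lower}. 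Combining them gives $0.834\le L_{3,5}^{\rm R}\le\aLeb$.

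For completeness, I would restate the ordered structure behind Theorem~\ref{thm:L35-lower}, since that is where the real content lies. The steps are:
\begin{enumerate}
\item The convex-hull formula of Section~\ref{sec:variational-hierarchy} rewrites $L_{3,5}^{\rm R}$ as an infimum over placements.
\item Proposition~\ref{prop:normalization} reduces this to $\inf_{z\in\Omega_{\rm full}}A(z)$.
\item Proposition~\ref{prop:far-field} disposes of $\Omega_{\rm full}\setminus\Omega_{\rm coarse}$ using the disk-plus-point function $G$ of Lemma~\ref{lem:disk-point}.
\item Proposition~\ref{prop:collar} disposes of $\Omega_{\rm coarse}\setminus\Omega_{\rm core}$, using monotonicity of the hull under adding the third body.
\item Proposition~\ref{prop:core-cert}, together with the local soundness statement Proposition~\ref{prop:local-sound}, handles $\Omega_{\rm core}$.
\end{enumerate}
All of the finite hypotheses in these steps are supplied by Proposition~\ref{prop:certified-instance}.

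The only genuine obstacle is not in the deduction itself, which is two lines. It lies in trusting the finite hypotheses certified in Proposition~\ref{prop:certified-instance}. That means checking two things: that the rational inputs recorded in the archive are bridged to the ideal constants, and that the leaf boxes exactly cover $\Omega_{\rm core}$ and the collars. The bridge is needed for the constants $\pi/5$ and the vertex radii. The stated margin $>2.6\cdot10^{-11}$ dominates the perturbation $<10^{-14}$, which is what makes the bridge go through. I would treat these as given, as the paper does, and simply record the chain of inequalities.
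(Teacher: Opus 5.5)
Your proposal is correct and matches the paper's own proof. The paper also deduces the theorem in two lines by combining $L_{3,5}^{\rm R}\ge0.834$ from Theorem~\ref{thm:L35-lower} with $L_{3,5}^{\rm R}\le\aLeb$, which follows from $\mathcal F_{3,5}^{\rm R}\subset\mathcal W_1$ and Proposition~\ref{prop:cw-equal}; your outline of the normalization, far-field, collar and Core5D steps, with the finite hypotheses supplied by Proposition~\ref{prop:certified-instance}, follows the paper's global soundness argument.
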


\begin{proof}
By Theorem~\ref{thm:L35-lower}, $L_{3,5}^{\rm R}\ge0.834$.  By the finite-test
lower-bound property, $L_{3,5}^{\rm R}\le \aLeb$.  Hence
$\aLeb\ge L_{3,5}^{\rm R}\ge0.834$.
\end{proof}


\end{document}